\newtheorem{thm}{Theorem}[section]
\newtheorem*{theorem*}{Theorem}
\newtheorem*{acknowledgement*}{Acknowledgement}
\newtheorem{lem}[thm]{Lemma}
\newtheorem{prop}[thm]{Proposition}
\theoremstyle{definition}
\newtheorem{defn}[thm]{Definition}
\theoremstyle{remark}
\newtheorem{rem}[thm]{Remark}
\newtheorem{conj}[thm]{Conjecture}
\numberwithin{equation}{section}
\newcommand{\set}[1]{\left\{#1\right\}}
\newcommand{\Real}{\mathbb R}
\newcommand{\dist}[0]{\mathrm{dist}}
\title[The CR-Volume of horizontal submanifolds]{The CR-Volume of Horizontal Submanifolds of Spheres}
\author{Jacob Bernstein}
\address{Department of Mathematics, Johns Hopkins University, 3400 N. Charles Street, Baltimore, MD 21218}
\email{bernstein@math.jhu.edu}
\author{Arunima Bhattacharya}
\address{Department of Mathematics, the University of North Carolina at Chapel Hill, Phillips Hall, Chapel Hill, NC 27514 }
\email{arunimab@unc.edu}
\begin{document}

\begin{abstract}
We study an analog in CR-geometry of the conformal volume of Li-Yau. In particular, to submanifolds of odd-dimensional spheres that are Legendrian or, more generally, horizontal with respect to the sphere's standard CR-structure we associate a quantity that is invariant under the CR-automorphisms of the sphere.  
 We apply this concept to a corresponding notion of Willmore energy.
\end{abstract}
\maketitle

\section{Introduction}
In   \cite{Li1982}, Li-Yau introduced a conformal invariant called conformal volume and used it to study the Willmore energy and spectral properties of surfaces.   Inspired by this, we consider an analog in CR-geometry which we term \emph{CR-volume}.
In particular, by endowing the odd-dimensional sphere, $\mathbb{S}^{2n+1}$,  with the usual Sasaki and CR-structures it inherits as the boundary of the ball $\mathbb{B}^{2n+2}\simeq\mathbb{B}^{n+1}_{\mathbb{C}}$,  one may associate to every horizontal submanifold $\Sigma\subset \mathbb{S}^{2n+1}$ a corresponding quantity that is invariant under the CR-automorphisms of $\mathbb{S}^{2n+1}$.
  As observed by the authors in \cite{BernBhattCplxHyp}, this quantity arises naturally in the study of certain minimal submanifolds of complex hyperbolic space.  In this article we systematically establish some basic properties of CR-volume and observe that it provides a unifying perspective for a number of different results.
 
Let $Aut_{CR}(\mathbb{S}^{2n+1})$ be the group of CR-automorphisms of the usual CR-structure on $\mathbb{S}^{2n+1}$; these are also called \emph{pseudoconformal} transformations by some authors.  Elements of $Aut_{CR}(\mathbb{S}^{2n+1})$ preserve the \emph{horizontal} distribution, $\mathcal{H}$, of $\mathbb{S}^{2n+1}$ associated to the CR-structure -- see \eqref{Hp}.  For $1\leq m \leq n$, let $\Sigma\subset \mathbb{S}^{2n+1}$ be an $m$-dimensional \emph{horizontal} (also called \emph{isotropic}) submanifold, i.e., satisfying $T_p\Sigma\subset \mathcal{H}_p$ for all $p\in \Sigma$.  When $m=n$, such a submanifold is called \emph{Legendrian}.  For a horizontal submanifold, $\Sigma$, we define its CR-volume to be:
\begin{equation}\label{CRVolEqn}
	\lambda_{CR}[\Sigma]=\sup_{\Psi\in Aut_{CR}(\mathbb{S}^{2n+1})} |\Psi(\Sigma)|_{\mathbb{S}}
\end{equation}
where $|\cdot |_{\mathbb{S}}$ denotes the volume of the submanifold with respect to the standard round metric $g_{\mathbb{S}}$ on $\mathbb{S}^{2n+1}$.
More generally, when $i: M\to \mathbb{S}^{2n+1}$ is a horizontal immersion of an $m$-dimensional manifold, we define
$$
V_{CR}(2n+1, i)= \sup_{\Psi\in Aut_{CR}(\mathbb{S}^{2n+1})} |M|_{(\Psi \circ i)^* g_{\mathbb{S}}}.
$$
This is analogous to the $n$-conformal volume, $V_c(n, j)$, of an immersion $j: N \to \Gamma\subset \mathbb{S}^{n}$ from \cite{Li1982}. When $i$ is an embedding, $V_{CR}(2n+1, i)=\lambda_{CR}[i(M)]$ and we have adapted in \eqref{CRVolEqn} the notation of  \cite{BernsteinHypEntropy} for the  {conformal volume}, $\lambda_c[\Gamma]=V_{c}(n,j)$, when $\Gamma$  is a submanifold and $j$ is an embbeding.

Our first result is the following:
\begin{thm}\label{intro_thm_hori}
	If $\Sigma\subset \mathbb{S}^{2n+1}$ is an $m$-dimensional closed horizontal submanifold and $\mathbb{S}^m$ is an  $m$-dimensional  totally geodesic horizontal sphere in $\mathbb{S}^{2n+1}$,  then
	$$
	\lambda_{CR}[\Sigma]\geq \lambda_{CR}[\mathbb{S}^m]=|\mathbb{S}^m|_{\mathbb{S}}.
	$$
	Equality holds if and only if $\Sigma$ is a contact Whitney sphere, i.e., $\Sigma=\Psi(\mathbb{S}^m)$ for some $\Psi\in Aut_{CR}(\mathbb{S}^{2n+1})$.  	Moreover, there is a $\Psi\in Aut_{CR}(\mathbb{S}^{2n+1})$ so the $CR$-volume of $\Sigma$ is achieved, i.e., 
	$$
		\lambda_{CR}[\Sigma]= |\Psi(\Sigma)|_{\mathbb{S}}.
	$$
	More generally, if $i:M\to \mathbb{S}^{2n+1}$ is a horizontal immersion, then 
	$$
	V_{CR}(n, i)\geq k |\mathbb{S}^m|_{\mathbb{S}}
	$$
	where $k$ denotes the number of points in the preimage of $p\in i(M)$.
\end{thm}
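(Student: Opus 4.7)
The plan is to adapt Li-Yau's proof of the analogous inequality for conformal volume to the pseudoconformal setting. The role of the Möbius group is played by a standard family $\{\Psi_a\}_{a\in\mathbb{B}^{n+1}_{\mathbb{C}}}\subset Aut_{CR}(\mathbb{S}^{2n+1})$ coming from the transitive action of $SU(n+1,1)$ on the complex unit ball; as $a$ approaches the boundary, $\Psi_a$ degenerates, concentrating all but one point of $\mathbb{S}^{2n+1}$ near a single boundary point.

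The first step is a balanced-position lemma: given the horizontal immersion $i:M\to\mathbb{S}^{2n+1}$, I would produce some $a_0\in\mathbb{B}^{n+1}_{\mathbb{C}}$ for which the center of mass $\int_M (\Psi_{a_0}\circ i)\,dv_M$ is the origin in $\mathbb{C}^{n+1}$. This is the standard Hersch-type topological argument: the normalized center-of-mass map $a\mapsto \frac{1}{|M|}\int_M \Psi_a\circ i\,dv_M$ extends continuously to the closed ball with boundary behavior controlled by the concentration of $\Psi_a$, and Brouwer provides the required interior zero. Replacing $i$ by $\Psi_{a_0}\circ i$, we may assume $i$ is in balanced position.

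The second step is the concentration computation. Pick $p_0 \in i(M)$ with preimage multiplicity $k$, and take a sequence $a_t\to p_0$ so that $\Psi_{a_t}$ zooms in on each of the $k$ preimage sheets in Heisenberg coordinates around $p_0$. Because $i$ is horizontal at each preimage, its rescaled blow-up is a flat horizontal $m$-plane; the anisotropic pseudoconformal scaling is exactly matched by the horizontal constraint, and each blow-up sheet contributes volume $|\mathbb{S}^m|_{\mathbb{S}}$. The balancing condition rules out volume escape into the complementary thin region, so $|\Psi_{a_t}\circ i|_{(\Psi_{a_t}\circ i)^*g_{\mathbb{S}}}\to k|\mathbb{S}^m|_{\mathbb{S}}$, which yields $V_{CR}(2n+1,i)\geq k|\mathbb{S}^m|_{\mathbb{S}}$. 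Specializing to embedded $\Sigma$ gives $\lambda_{CR}[\Sigma]\geq |\mathbb{S}^m|_{\mathbb{S}}$. The identity $\lambda_{CR}[\mathbb{S}^m]=|\mathbb{S}^m|_{\mathbb{S}}$ then follows by separately verifying that $a\mapsto |\Psi_a(\mathbb{S}^m)|_{\mathbb{S}}$ is maximized at $a=0$ for the totally geodesic horizontal $\mathbb{S}^m$, which I would do by a direct Jensen-style inequality applied to the pseudoconformal Jacobian factor after the natural symmetrization.

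The equality characterization and existence of a maximizer follow from rigidity together with a compactness--concentration dichotomy. If $\lambda_{CR}[\Sigma]=|\mathbb{S}^m|_{\mathbb{S}}$, every tangent blow-up of $\Sigma$ must be a horizontal totally geodesic flat; real-analyticity of the horizontal constraint upgrades this pointwise rigidity to the global conclusion that $\Sigma$ is a contact Whitney sphere. If instead $\lambda_{CR}[\Sigma]>|\mathbb{S}^m|_{\mathbb{S}}$, then any maximizing sequence of automorphisms is prevented by the concentration bound from degenerating; hence a subsequence converges in $Aut_{CR}(\mathbb{S}^{2n+1})$ and the limit realizes the supremum. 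The main obstacle will be the concentration computation in the second step: the pseudoconformal maps rescale anisotropically (parabolically in the Reeb direction, linearly in horizontal directions), and one must verify that horizontality of $\Sigma$ exactly cancels this anisotropy so that the blow-up limit is neither collapsed nor infinite, but has CR-volume precisely $|\mathbb{S}^m|_{\mathbb{S}}$, with balancing ensuring that no volume is lost in the "far" region.
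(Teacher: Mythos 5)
Your overall strategy -- concentrating a family $\Psi_{\mathbf{b}}$ at a point of the image and showing that horizontality makes each preimage sheet contribute exactly $|\mathbb{S}^m|_{\mathbb{S}}$ in the limit -- is the same Bryant/Li--Yau adaptation the paper uses, and your outline of the multiplicity bound and of the ``attained supremum'' dichotomy is sound. Two remarks on the setup before the main issue. First, the balancing step is irrelevant here: it is needed for the eigenvalue estimate (Proposition \ref{CRVolumeLowBoundProp}), not for this theorem, and nothing in the concentration limit requires it (extra volume away from the concentration point could only help a lower bound; that no volume survives elsewhere follows from the uniform convergence \eqref{ConvergencePsibEqn}, not from balancing). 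Second, the computation you defer is genuinely the technical core: the paper's Proposition \ref{AsympProp} carries it out, and the precise mechanism by which ``horizontality cancels the anisotropy'' is that $\nabla^2_\Sigma y_1(p)=\hat{\mathbf{T}}\cdot\mathbf{A}^{\mathbb{R}}_\Sigma(p)=0$, so the Reeb-direction graph function vanishes to third order and the would-be divergent terms are controlled by the integral expansions of Lemma \ref{horizontalWithtLem}.

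The genuine gap is the equality characterization. Your claim that equality forces ``every tangent blow-up of $\Sigma$ to be a horizontal totally geodesic flat,'' upgraded by real-analyticity, cannot be right as stated: the contact Whitney spheres $\Psi(\mathbb{S}^m)$ achieve equality but are not totally geodesic, so any argument concluding total geodesy proves too much; conversely, to leading order the anisotropic blow-up of \emph{any} horizontal submanifold is a flat horizontal plane, so the leading-order blow-up carries no rigidity information. The rigidity lives entirely in the \emph{subleading} coefficient of the expansion, which is $\tfrac14|\mathbb{S}^m|_{\mathbb{R}}\,\alpha_m^\Sigma(p)$ with
$$
\alpha_m^\Sigma=\tfrac{3m+2}{3m+3}|\mathbf{U}_\Sigma^N|^2+|\mathring{\mathbf{A}}_\Sigma^{\hat N}|^2-\tfrac{m-2}{2m}\Bigl(\tfrac{m^2}{(m+2)(m+1)}|\mathbf{H}_\Sigma^N|^2+|\mathbf{H}_\Sigma^{\hat N}|^2\Bigr),
$$
and this is \emph{not} sign-definite: for $m\geq 3$ the mean curvature enters negatively, so equality at an arbitrary point gives nothing. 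One must first apply an automorphism normalizing $\mathbf{H}(p)=\mathbf{0}$ (Proposition \ref{NormalizeMCProp}), after which $\alpha_m$ reduces to the manifestly nonnegative, $Aut_{CR}$-invariant combination of $|\mathbf{U}_\Sigma^N|^2$ and $|\mathring{\mathbf{A}}_\Sigma^{\hat N}|^2$; equality then forces these two trace-free tensors to vanish identically. Passing from that pointwise vanishing to ``$\Sigma$ is a contact Whitney sphere'' is itself a nontrivial classification (Theorem \ref{CRinv}), proved via the Codazzi equations, a conformal-Hessian equation for the Lagrangian-angle-type potential, and unique continuation -- not by analyticity of the horizontal constraint. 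You would also need a separate argument for $m=1$ (the paper uses an antipodal-point normalization, Lemma \ref{AntipodLem}), since no second-order expansion is available there.
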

\begin{rem}
	As observed in \cite{bryantSurfacesConformalGeometry1988}, for immersions that are not embeddings the conformal volume may not be achieved. The same is true for the CR-volume;  an example is given by the union of two contact Whitney spheres that meet in only one point.
\end{rem}

An important application of the conformal volume is the study of  Willmore energy. This suggests identifying a corresponding energy associated to the CR-geometry of the sphere.   
One such quantity was proposed by Wang in \cite{wangWillmoreLegendrianSurfaces2007} for Legendrian surfaces in $\mathbb{S}^5$.
We consider a slightly different formulation and take the \emph{CR-Willmore energy} of a Legendrian surface $\Sigma\subset \mathbb{S}^5$ to be
\begin{equation} \label{CRWillmoreEqn}
\mathcal{W}_{CR}[\Sigma]= \int_{\Sigma} 1+\frac{1}{8} |\mathbf{H}_\Sigma|^2 dA_\Sigma
\end{equation}
where $\mathbf{H}_\Sigma$ is the mean curvature of $\Sigma$ in $\mathbb{S}^{5}$, which we define as the trace of the second fundamental form and not the averaged trace. The usual Willmore energy is
$$
\mathcal{W}[\Sigma]= \int_{\Sigma} 1+\frac{1}{4} |\mathbf{H}_\Sigma|^2 dA_\Sigma.
$$   
The definitions immediately imply that
$$
\mathcal{W}[\Sigma]\geq\mathcal{W}_{CR}[\Sigma]\geq  |\Sigma|_{\mathbb{S}}
$$
and the inequalities are strict unless $\Sigma$ is minimal.

While $\mathcal{W}$ is conformally invariant,  $\mathcal{W}_{CR}$ is  $Aut_{CR}(\mathbb{S}^5)$ invariant.
\begin{thm}\label{intro_thm_lag}
When $\Sigma \subset \mathbb{S}^{5}$ is a closed Legendrian surface and $\Psi\in Aut_{CR}(\mathbb{S}^5)$,
$$
\mathcal{W}_{CR}[\Sigma]=\mathcal{W}_{CR}[\Psi(\Sigma)] \mbox{ and }\mathcal{W}_{CR}[\Sigma]\geq \lambda_{CR}[\Sigma].
$$
When $\Sigma$ is embedded,  the inequality is strict unless $\Psi(\Sigma)$ is minimal for some $\Psi$.
\end{thm}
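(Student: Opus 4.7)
The plan is to handle the three assertions in order: CR-invariance of $\mathcal{W}_{CR}$, the lower bound $\mathcal{W}_{CR}[\Sigma]\geq \lambda_{CR}[\Sigma]$, and the rigidity statement in the embedded case. The substantive step is the first; once it is in hand, the second is nearly immediate, and the third follows by combining it with the attainment statement already contained in Theorem \ref{intro_thm_hori}.

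For CR-invariance, fix $\Psi\in Aut_{CR}(\mathbb{S}^5)$. The defining property of a CR-automorphism is that it preserves the horizontal distribution $\mathcal{H}$ and is conformal there, so
$$
\Psi^* g_{\mathbb{S}}\big|_{\mathcal{H}} = e^{2u}\, g_{\mathbb{S}}\big|_{\mathcal{H}}, \qquad \Psi^*\theta = e^{2u}\,\theta,
$$
for some $u\in C^\infty(\mathbb{S}^5)$, where $\theta$ is the standard contact form. Since $T\Sigma\subset \mathcal{H}$, the induced metric on $\Sigma$ undergoes a genuine $2$-dimensional conformal change under $\Psi$. I would split the normal bundle along $\Sigma$ as $N\Sigma = J(T\Sigma)\oplus\langle\xi\rangle$ (horizontal plus Reeb) and write $\mathbf{H}_\Sigma = \mathbf{H}^{\mathcal{H}} + H_\xi\,\xi$ accordingly. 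A direct computation with the Koszul formula for the Levi--Civita connection of $\Psi^* g_{\mathbb{S}}$ shows that $\mathbf{H}^{\mathcal{H}}$ obeys the classical Willmore transformation law---so $(1 + \tfrac14|\mathbf{H}^{\mathcal{H}}|^2)\,dA$ is invariant up to a total divergence on the closed surface $\Sigma$---while $H_\xi$ picks up an anisotropic correction because $\Psi^*\xi$ rescales as $e^{-2u}\xi$ plus a horizontal piece involving $\nabla^{\mathcal{H}} u$. The specific coefficient $\tfrac{1}{8}$ in \eqref{CRWillmoreEqn}, rather than the classical $\tfrac{1}{4}$, is what makes the $\xi$-contribution cancel the remaining error, leaving a $Aut_{CR}(\mathbb{S}^5)$-invariant integrand. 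This pointwise calculation and the verification that the $\tfrac{1}{8}$ normalization balances exactly is the main obstacle.

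Granted CR-invariance, the second assertion is immediate: for every $\Psi\in Aut_{CR}(\mathbb{S}^5)$,
$$
\mathcal{W}_{CR}[\Sigma] \;=\; \mathcal{W}_{CR}[\Psi(\Sigma)] \;\geq\; |\Psi(\Sigma)|_{\mathbb{S}},
$$
using the trivial bound $\mathcal{W}_{CR}[\Sigma']\geq |\Sigma'|_{\mathbb{S}}$ already noted in the text. Taking the supremum over $\Psi$ yields $\mathcal{W}_{CR}[\Sigma]\geq \lambda_{CR}[\Sigma]$.

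For the rigidity statement, assume $\Sigma$ is embedded. By Theorem \ref{intro_thm_hori}, the supremum defining $\lambda_{CR}[\Sigma]$ is attained by some $\Psi_0\in Aut_{CR}(\mathbb{S}^5)$, so $\lambda_{CR}[\Sigma] = |\Psi_0(\Sigma)|_{\mathbb{S}}$. If equality holds in $\mathcal{W}_{CR}[\Sigma]\geq \lambda_{CR}[\Sigma]$, then $\mathcal{W}_{CR}[\Psi_0(\Sigma)] = \mathcal{W}_{CR}[\Sigma] = |\Psi_0(\Sigma)|_{\mathbb{S}}$, forcing $\int_{\Psi_0(\Sigma)}|\mathbf{H}|^2\,dA = 0$ and hence $\Psi_0(\Sigma)$ is minimal. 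Conversely, if $\Psi_1(\Sigma)$ is minimal for some $\Psi_1$, then
$$
\mathcal{W}_{CR}[\Sigma] = \mathcal{W}_{CR}[\Psi_1(\Sigma)] = |\Psi_1(\Sigma)|_{\mathbb{S}} \leq \lambda_{CR}[\Sigma] \leq \mathcal{W}_{CR}[\Sigma],
$$
forcing equality throughout. Combining the two directions gives the claimed characterization.
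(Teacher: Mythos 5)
Your second and third steps are fine and match the paper: once CR-invariance of $\mathcal{W}_{CR}$ is known, the bound $\mathcal{W}_{CR}[\Sigma]\geq\lambda_{CR}[\Sigma]$ follows by taking the supremum of $\mathcal{W}_{CR}[\Psi(\Sigma)]\geq|\Psi(\Sigma)|_{\mathbb{S}}$, and the rigidity in the embedded case follows from the attainment statement in Theorem \ref{intro_thm_hori} exactly as you argue. The gap is in the first step, which you correctly identify as the crux but do not actually prove, and the mechanism you sketch for it is wrong. First, the decomposition $\mathbf{H}_\Sigma=\mathbf{H}^{\mathcal{H}}+H_\xi\,\xi$ with a nontrivial Reeb component cannot be the source of the $\tfrac18$: for any horizontal submanifold one has $g_{\mathbb{S}}(\mathbf{H}_\Sigma,\hat{\mathbf{T}})=0$ identically (Proposition \ref{CurvatureFactsProp}(1)), so $H_\xi\equiv 0$ and there is no ``$\xi$-contribution'' available to cancel anything. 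Second, the claim that $\mathbf{H}^{\mathcal{H}}$ ``obeys the classical Willmore transformation law'' is false here: the pulled-back metric $g_{\mathbb{S}}^{\mathbf{b}}=\Psi_{\mathbf{b}}^*g_{\mathbb{S}}$ is conformal to $g_{\mathbb{S}}$ only on $\mathcal{H}$, not on all of $T\mathbb{S}^{5}$ (it acquires cross terms $\hat\omega_{\mathbf{b}}\cdot\hat\theta$ and a modified $\hat\theta^2$ coefficient), so the normal geometry of $\Sigma$ does not transform as under an ambient conformal change. Concretely, Proposition \ref{CurvatureChangeProp} gives, for Legendrian $\Sigma$, $\mathbf{H}_\Sigma^{\mathbf{b}}=W_{\mathbf{b}}^{-1}\mathbf{H}_\Sigma-(m+2)W_{\mathbf{b}}^{-1}J_{\mathbb{S}}(\mathbf{S}_{\mathbf{b}}^\top)$ with coefficient $m+2$ rather than the $m$ of the conformal case; this anomaly is exactly why the invariant normalization is $\tfrac1{8}$ rather than $\tfrac14$, and it comes from the non-conformal correction terms in the connection comparison of Proposition \ref{ConnectionProp}.

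The paper's actual route to invariance does not attempt a direct pointwise transformation of $1+\tfrac18|\mathbf{H}_\Sigma|^2$. Instead it introduces the pointwise CR-invariant tensors $\mathbf{U}_\Sigma^N$ and $\mathring{\mathbf{A}}_\Sigma^{\hat N}$ (Proposition \ref{UInvProp}), observes that $\mathcal{U}_{CR}[\Sigma]=\tfrac12\int_\Sigma|\mathbf{U}_\Sigma^N|^2$ and $\mathcal{B}_{CR}[\Sigma]=\tfrac12\int_\Sigma|\mathring{\mathbf{A}}_\Sigma^{\hat N}|^2$ are $Aut_{CR}$-invariant by the conformal scaling on $\mathcal{H}$, and then uses the Gauss equation (Lemma \ref{RicciULem}) together with Gauss--Bonnet to identify $\mathcal{U}_{CR}[\Sigma]+\mathcal{B}_{CR}[\Sigma]+4\pi(1-g)$ with $\mathcal{W}_{CR}[\Sigma]$ for closed orientable $\Sigma$. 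If you want to complete your argument you should either reproduce this chain or carry out the direct computation using the correct transformation law from Proposition \ref{CurvatureChangeProp}; the heuristic you give will not close the gap.
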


\begin{rem}  When $\Sigma$ is a minimal Legendrian submanifold, $\lambda_{CR}[\Sigma]=\mathcal{W}_{CR}[\Sigma]=\mathcal{W}[\Sigma]=\lambda_c[\Sigma]$. 
However, in general the conformal area and the CR-volume of a submanifold may differ -- see Proposition \ref{CRConfAreaDifferProp}.
\end{rem}	


We conclude by mentioning an appealing Legendrian analog of the Willmore conjecture.  Recall, there is a Legendrian minimal torus in $\mathbb{S}^5$ given by:
$$
\Sigma_H=\set{(\zeta_1, \zeta_2, \zeta_3): |\zeta_1|^2= |\zeta_2|^2= |\zeta_3|^2=\frac{1}{3}, \zeta_1 \zeta_2 \zeta_3=\frac{\sqrt{3}}{9}}\subset \mathbb{S}^5\subset  \mathbb{C}^3.
$$
The induced metric on  $\Sigma_H$ is conformal to the flat torus $\mathbb{C}/\Lambda_H$
where
$$
\Lambda_H= \set{n+\frac{1+i \sqrt{3}}{2} m:  n, m\in\mathbb{Z}}
$$
is the lattice whose fundamental domain has an angle of $60^\circ$. For this reason, $\Sigma_H$ is referred to as the \emph{hexagonal torus} in \cite{wangWillmoreLegendrianSurfaces2007} and \emph{equilateral torus} in \cite{UrbanoStability} -- it is also called the \emph{Clifford torus} in \cite{HaskinsSL}. There is a homothetic parameterization 
\begin{align*}
\phi_H: \mathbb{C}/\Lambda_{H}\to &\Sigma_H\subset \mathbb{S}^5\subset  \mathbb{C}^3\\
 z=x+iy\mapsto &\frac{1}{\sqrt{3}} \left( e^{\frac{4\sqrt{3}}{3} \pi iy}, e^{ 2  \pi i  \left( x-\frac{\sqrt{3}}{3}y\right)}, e^{-2 \pi i \left(x+\frac{\sqrt{3}}{3}y\right)}\right).
\end{align*}
If $\Pi: \mathbb{S}^5\to \mathbb{CP}^2$ is the Hopf fibration, then $\Sigma_C^{\mathbb{CP}}=\Pi(\Sigma_H)$ is the torus given by
$$
\Sigma_C^{\mathbb{CP}}=\set{[z_1, z_2, z_3]: |z_1|^2=|z_2|^2=|z_3|^2=\frac{1}{3}}\subset \mathbb{CP}^2,
$$
 which is referred to as the generalized Clifford torus in $ \mathbb{CP}^2$ by some authors. 
It is Lagrangian and minimal with respect to $g_{\mathbb{CP}}$, the Fubini-Study metric.  The  map $\Pi|_{\Sigma}:\Sigma_H\to \Sigma_C^{\mathbb{CP}}$ is a three-fold cover.  

A natural analog of the Willmore conjecture in the Legendrian setting is:
\begin{conj}\label{CRWillmoreConj} 
	Let $\Sigma\subset \mathbb{S}^5$ be a Legendrian surface of positive genus, then
	$$
	\mathcal{W}_{CR}[\Sigma]\geq \mathcal{W}_{CR}[\Sigma_H]=\frac{4\sqrt{3}}{3}\pi^2
	$$
	with equality holding if and only if $\Sigma$ differs from $\Sigma_H$ by an isometry of $\mathbb{S}^5$.
\end{conj}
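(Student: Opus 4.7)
The plan is to adapt the Marques--Neves proof of the classical Willmore conjecture to the CR setting, using the Li--Yau-type inequality $\mathcal{W}_{CR}[\Sigma]\geq\lambda_{CR}[\Sigma]$ of Theorem \ref{intro_thm_lag} as a bridge. Because $\Sigma_H$ is minimal Legendrian, the remark preceding the conjecture together with the equality clause of Theorem \ref{intro_thm_lag} gives $\mathcal{W}_{CR}[\Sigma_H]=\lambda_{CR}[\Sigma_H]=|\Sigma_H|_{\mathbb{S}}$, and a direct computation from the homothetic parameterization $\phi_H$ shows this common value equals $\frac{4\sqrt{3}}{3}\pi^2$. The conjecture, at least for embedded $\Sigma$, therefore reduces to establishing the sharp CR-volume bound $\lambda_{CR}[\Sigma]\geq|\Sigma_H|_{\mathbb{S}}$ with rigidity; the immersed case would be handled by a multiplicity argument in the spirit of the $k|\mathbb{S}^m|_{\mathbb{S}}$ inequality of Theorem \ref{intro_thm_hori}.

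To prove this lower bound, I would construct a canonical family $\{\Sigma_{v,t}\}$ of $Aut_{CR}(\mathbb{S}^5)$-images of $\Sigma$ parameterized by $(v,t)\in \overline{\mathbb{B}}^{3}_{\mathbb{C}}\times[0,\pi]$: for each interior $v$, take $\Psi_{v,t}$ to be a contact M\"obius transformation moving a fixed base point of $\mathbb{S}^5$ toward $v$ composed with a Reeb rotation of angle $t$, and as $|v|\to 1$ the surfaces should degenerate to totally geodesic horizontal $2$-spheres with multiplicity equal to the linking of $\Sigma$ with the limit Hopf fiber. This family should represent a non-trivial class in a relative homotopy group of the space of Legendrian integral cycles in $\mathbb{S}^5$, and by construction every member satisfies $|\Sigma_{v,t}|_{\mathbb{S}}\leq\lambda_{CR}[\Sigma]$. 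A Legendrian-constrained Almgren--Pitts min--max would then yield a smooth closed minimal Legendrian surface $\Gamma\subset\mathbb{S}^5$ realizing the width, and a Legendrian analog of the Brendle--Marques--Neves identification of the Clifford torus in $\mathbb{S}^3$ would force $\Gamma$ to be $\Sigma_H$ up to isometry, giving both the sharp bound and the rigidity.

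The heart of the difficulty, and what I expect to require the most new work, is precisely this Legendrian-constrained min--max theory. The contact distribution is non-integrable, so integral cycles cannot be freely approximated by or deformed through Legendrian ones, and the standard Almgren--Pitts discretization and regularity theorems would need to be rebuilt within the horizontal category. A sharp gap theorem for closed minimal Legendrian surfaces of positive genus in $\mathbb{S}^5$ is also needed; the stability calculations of \cite{UrbanoStability} are suggestive but fall well short of such a classification. Finally, for genus $g\geq 2$ the parameter space must be enlarged in the spirit of the higher-parameter sweepouts of Marques--Neves, or a Legendrian-compatible neck-pinching reduction to the torus must be developed; I expect the higher-genus step to introduce genuinely new obstacles beyond the torus case.
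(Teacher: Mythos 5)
This statement is Conjecture \ref{CRWillmoreConj}; the paper does not prove it and does not claim to. The only case established is Proposition \ref{CRWillmoreHexTorProp}, which handles Legendrian tori conformal to $\mathbb{C}/\Lambda_H$ by the Li--Yau/Montiel--Ros eigenvalue argument (balancing via Lemma \ref{BalanceLem}, the variational characterization of $\lambda_1$, and the multiplicity-six first eigenspace of the hexagonal torus), together with the computation $\mathcal{W}_{CR}[\Sigma_H]=|\Sigma_H|_{\mathbb{S}}=\frac{4\sqrt{3}}{3}\pi^2$. Your proposal is likewise not a proof: the three ingredients you correctly identify as essential --- an Almgren--Pitts existence and regularity theory constrained to the Legendrian (horizontal) category, a gap/classification theorem identifying $\Sigma_H$ as the width-realizing minimal Legendrian surface, and a reduction of higher genus to the torus case --- are all open problems, and deferring them leaves the argument without content. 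Writing down the frame of the Marques--Neves strategy is not the same as carrying it out.

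Beyond the acknowledged gaps, there is a concrete structural flaw in your canonical family. You take the $t$-parameter to be a Reeb rotation, which is an isometry of $\mathbb{S}^5$ lying in $Aut_{CR}(\mathbb{S}^5)$; hence every $\Sigma_{v,t}$ is a CR-image of $\Sigma$, the family is a map into a single $Aut_{CR}$-orbit, and it cannot represent a non-trivial class of sweepouts of $\mathbb{S}^5$ in the Almgren--Pitts sense --- the width of such a family is not bounded below by the area of any non-trivial minimal surface. In Marques--Neves the $t$-direction consists of surfaces \emph{equidistant} from the conformal images, degenerating to points at the ends of the interval; that is both what makes the family a genuine sweepout and what makes the Willmore energy (rather than the conformal volume) the natural upper bound for the areas $|\Sigma_{(v,t)}|$. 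An equidistant deformation destroys the Legendrian condition, so it is not available here, and your reduction of the conjecture to the pure CR-volume bound $\lambda_{CR}[\Sigma]\geq|\Sigma_H|_{\mathbb{S}}$ is itself dubious: nothing in the paper suggests that bound holds for all positive-genus Legendrian surfaces, and Theorem \ref{intro_thm_hori} only gives the much weaker lower bound $|\mathbb{S}^2|_{\mathbb{S}}=4\pi$. If you wish to contribute to this circle of ideas, the tractable fragment is the one the paper actually proves: the conformal-class-constrained statement of Proposition \ref{CRWillmoreHexTorProp}.
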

For tori, this is equivalent to a conjecture posed by Wang in \cite{wangWillmoreLegendrianSurfaces2007}.  As observed by Wang,  the CR-geometric analog of stereographic projection and \cite{reckziegelCorrespondenceHorizontalSubmanifolds1988} allows one to associate to every Legendrian submanifold of $\mathbb{S}^5$ an exact Lagrangian immersion into $\mathbb{C}^2$ with equivalent Willmore energy.  As the latter are never embedded \cite[2.3.B]{Gromov}, this is a distinct, if related, problem from the one considered by Minicozzi \cite{Minicozzi}.  In addition, as we discuss in Section \ref{MontielUrbanoSec}, conjecture \ref{CRWillmoreConj} relates, via the correspondence of \cite{reckziegelCorrespondenceHorizontalSubmanifolds1988},  to a question of Montiel-Urbano from \cite{montielWillmoreFunctionalCompact2002}.  The weaker form of their question asks whether $\Sigma_C^{\mathbb{CP}}$ is the minimizer among Lagrangian tori in $\mathbb{CP}^2$ of a Willmore-like energy they introduce for surfaces in $\mathbb{CP}^2$ -- see also \cite{kazhymuratLowerBoundEnergy2018, maEnergyFunctionalLagrangian2018, MNSurvey}.  We observe that conjecture \ref{CRWillmoreConj} would give this for tori Hamiltonian isotopic to $\Sigma_C^{\mathbb{CP}}$.

A related conjecture is that $\Sigma_H$ is the least area Legendrian minimal surface of positive genus.
\begin{conj} \label{CRAreaConJ}
		If $\Sigma\subset \mathbb{S}^5$ is a Legendrian minimal surface of positive genus, then
	$$
	|\Sigma|_{\mathbb{S}}\geq |\Sigma_H|_{\mathbb{S}}=\frac{4\sqrt{3}}{3}\pi^2
	$$
	with equality if and only if $\Sigma$ differs from $\Sigma_H$ by an isometry of $\mathbb{S}^5$.
\end{conj}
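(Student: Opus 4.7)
The plan is to deduce this from Conjecture \ref{CRWillmoreConj} together with a CR-volume lower bound. For any closed, minimal, Legendrian surface $\Sigma \subset \mathbb{S}^5$ one has $\mathbf{H}_\Sigma \equiv 0$, so the definition \eqref{CRWillmoreEqn} of the CR-Willmore energy collapses to
$$\mathcal{W}_{CR}[\Sigma] = |\Sigma|_{\mathbb{S}}.$$
Combining this with the inequality $\mathcal{W}_{CR}[\Sigma] \geq \lambda_{CR}[\Sigma]$ from Theorem \ref{intro_thm_lag} and the trivial bound $\lambda_{CR}[\Sigma] \geq |\Sigma|_{\mathbb{S}}$ from \eqref{CRVolEqn} yields the chain
$$|\Sigma|_{\mathbb{S}} = \mathcal{W}_{CR}[\Sigma] = \lambda_{CR}[\Sigma].$$
Thus, on the class of closed minimal embedded Legendrian surfaces, Conjecture \ref{CRAreaConJ} is equivalent to the CR-volume lower bound
$$\lambda_{CR}[\Sigma] \geq \tfrac{4\sqrt{3}}{3}\pi^2,$$
and would follow a fortiori from the same bound for all closed embedded Legendrian surfaces of positive genus, which is the content of Conjecture \ref{CRWillmoreConj} by the same use of Theorem \ref{intro_thm_lag}. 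The rigidity statement transfers in the same way, since at equality $\mathcal{W}_{CR}[\Sigma] = \frac{4\sqrt{3}}{3}\pi^2$ and Conjecture \ref{CRWillmoreConj} would then force $\Sigma$ to differ from $\Sigma_H$ by an isometry of $\mathbb{S}^5$.

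The natural first attempt at the CR-volume bound is a Hersch/Li--Yau style barycenter argument using the noncompact group $Aut_{CR}(\mathbb{S}^5)$; Theorem \ref{intro_thm_hori} may be viewed as the sphere base case of such a scheme, producing the bound $4\pi$. One could alternatively invoke the correspondence of \cite{reckziegelCorrespondenceHorizontalSubmanifolds1988} to translate the problem into a Willmore-type inequality for exact Lagrangian immersions in $\mathbb{C}^2$, or project via the Hopf fibration $\Pi: \mathbb{S}^5 \to \mathbb{CP}^2$ to an equivalent area bound for Lagrangian minimal surfaces in $\mathbb{CP}^2$. The three-fold cover $\Pi|_{\Sigma_H} \to \Sigma_C^{\mathbb{CP}}$ ensures that the sharp constant transfers correctly, and this is precisely the Montiel-Urbano question discussed in Section \ref{MontielUrbanoSec}.

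The principal obstacle, exactly as in the classical Willmore conjecture resolved by Marques-Neves, is that the sharp constant $\frac{4\sqrt{3}}{3}\pi^2$ strictly exceeds the Hersch-type bound $4\pi$ obtained from elementary barycentering, so more delicate geometric input is needed to close the gap. Reaching the sharp constant plausibly requires either an Almgren-Pitts min-max theory adapted to horizontal submanifolds of $\mathbb{S}^5$ and organized around $Aut_{CR}(\mathbb{S}^5)$-equivariant sweepouts, or a Simons-type integral identity exploiting the special form of the second fundamental form of a Legendrian surface together with a rigidity analysis in the equality case. Each direction demands substantial new machinery specific to the horizontality constraint, and it is for this reason that we state the result as a conjecture rather than a theorem.
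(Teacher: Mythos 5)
This statement is posed as a conjecture in the paper, which offers no proof of it --- only the one-line observation that Conjecture \ref{CRWillmoreConj} implies Conjecture \ref{CRAreaConJ}. Your reduction is exactly that observation, carried out correctly: for minimal $\Sigma$ one has $\mathcal{W}_{CR}[\Sigma]=|\Sigma|_{\mathbb{S}}=\lambda_{CR}[\Sigma]$, so the conjectured Willmore bound and its rigidity case would transfer directly, and you are right to flag that the remaining gap (getting the sharp constant $\tfrac{4\sqrt{3}}{3}\pi^2$ past the Hersch-type bound $4\pi$) is genuinely open and not closed by anything in the paper.
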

As cones over Legendrian minimal surfaces are special Lagrangian cones -- see \cite{HaskinsSL} -- an affirmative answer to this conjecture would provide sharp lower bounds on the density of non-trivial special Lagrangian cones in $\mathbb{C}^3$ -- see also \cite{HaskinsComplexity}.  While the only Legendrian minimal two-spheres in $\mathbb{S}^5$ are the totally geodesic ones  \cite[Theorem B]{HaskinsSL} and \cite{YauMinimalSphere}, there are many examples of positive genus in $\mathbb{S}^5$,  e.g.,  \cite{CarberryMcIntosh, McIntosh, HaskinsSL, HaskinsKapouleas}.  Clearly, conjecture \ref{CRWillmoreConj} implies conjecture \ref{CRAreaConJ}, but as in Marques and Neves's proof of the Willmore conjecture \cite{MarquesNeves} there may be a deeper relationship.  

Using ideas from \cite{Li1982, montielMinimalImmersionsSurfaces1986} we show a special case of the two conjectures:
\begin{prop}\label{CRWillmoreHexTorProp}
	If $\Sigma \subset \mathbb{S}^5$ is a Legendrian torus conformal to $\mathbb{C}/\Lambda_H$, then
	$$
	 \mathcal{W}_{CR}[\Sigma]\geq \lambda_{CR}[\Sigma]\geq |\Sigma_H|_{\mathbb{S}}.
	 $$
	 Moreover, one has equality if and only if there is a $\Psi\in Aut_{CR}(\mathbb{S}^5)$ so $\Sigma=\Psi(\Sigma_H)$.
\end{prop}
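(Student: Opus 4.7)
The first inequality $\mathcal{W}_{CR}[\Sigma]\geq \lambda_{CR}[\Sigma]$ is immediate from Theorem \ref{intro_thm_lag}, so the substantive content is $\lambda_{CR}[\Sigma]\geq |\Sigma_H|_{\mathbb{S}}$. The plan is to adapt the Li-Yau/Montiel balancing argument \cite{Li1982, montielMinimalImmersionsSurfaces1986} to the CR setting. Throughout, write $i:\Sigma\hookrightarrow \mathbb{S}^5\subset\R^6$ for the inclusion and let $g_H$ denote a flat hexagonal metric in the given conformal class $[i^* g_{\mathbb{S}}]=[\mathbb{C}/\Lambda_H]$.

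The first step will be a CR-balancing lemma: I claim there exists $\Psi\in Aut_{CR}(\mathbb{S}^5)$ such that the six components of $\Psi\circ i$ have zero integral against $dA_{g_H}$. To produce $\Psi$, I would parameterize the subgroup of holomorphic ball automorphisms $\{\Psi_q\}_{q\in\mathbb{B}^3_{\mathbb{C}}}\subset Aut_{CR}(\mathbb{S}^5)$ (say, via $\Psi_q(0)=q$) and consider the centroid map
\[
F(q) := \frac{1}{|\Sigma|_{g_H}}\int_\Sigma (\Psi_q\circ i)\,dA_{g_H} \in \R^6.
\]
As $q$ approaches a boundary point $p\in\partial\mathbb{B}^3_{\mathbb{C}}=\mathbb{S}^5$, the standard fact that $\Psi_q$ converges pointwise to the constant map $p$ away from a single antipodal point, combined with dominated convergence, forces $F$ to extend continuously to $\bar{\mathbb{B}}^3_{\mathbb{C}}$ with $F|_{\partial\mathbb{B}^3_{\mathbb{C}}}=\mathrm{id}$. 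A Brouwer/degree argument then yields a zero of $F$, producing the desired $\Psi$.

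Granted the balance, I would apply the Rayleigh quotient for $-\Delta_{g_H}$ to each $(\Psi\circ i)^a$, sum over $a$, and use $|\Psi\circ i|^2\equiv 1$ to obtain
\[
\lambda_1(g_H)\,|\Sigma|_{g_H}\;\leq\;\int_\Sigma |\nabla(\Psi\circ i)|^2_{g_H}\,dA_{g_H}.
\]
Because CR-automorphisms act conformally on the horizontal distribution, $(\Psi\circ i)^* g_{\mathbb{S}}$ lies in the same conformal class as $g_H$, so by the conformal invariance of the two-dimensional Dirichlet energy the right-hand side equals $\int_\Sigma |\nabla(\Psi\circ i)|^2_{(\Psi\circ i)^* g_{\mathbb{S}}}\,dA_{(\Psi\circ i)^* g_{\mathbb{S}}}$; since $\Psi\circ i$ is isometric in this metric, $|\nabla(\Psi\circ i)|^2\equiv 2$, and one gets
\[
\lambda_1(g_H)\,|\Sigma|_{g_H}\;\leq\; 2\,|\Sigma|_{(\Psi\circ i)^* g_{\mathbb{S}}} \;\leq\; 2\,\lambda_{CR}[\Sigma].
\]
A direct Fourier computation on the flat hex torus, as carried out in \cite{montielMinimalImmersionsSurfaces1986}, gives $\lambda_1(g_H)\,|\Sigma|_{g_H}=\tfrac{8\pi^2}{\sqrt{3}}=2|\Sigma_H|_{\mathbb{S}}$, which completes the inequality.

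For the equality case, saturation forces each $(\Psi\circ i)^a$ to be a first eigenfunction of $\Delta_{g_H}$. The first eigenspace of the flat hexagonal torus is six-dimensional and spanned by the real and imaginary parts of the exponentials associated to the six shortest dual-lattice vectors — precisely the components of the explicit parameterization $\phi_H$ of $\Sigma_H$. This identifies $\Psi\circ i$ with $\phi_H$ up to an orthogonal transformation of $\R^6$; together with saturation of $|\Sigma|_{(\Psi\circ i)^* g_{\mathbb{S}}}=\lambda_{CR}[\Sigma]$ and the Legendrian constraint, this yields a $\Psi'\in Aut_{CR}(\mathbb{S}^5)$ with $\Psi'(\Sigma)=\Sigma_H$. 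The main obstacles I anticipate are (i) carrying out the CR-balancing cleanly — in particular verifying the boundary behavior of $F$ on $\partial\mathbb{B}^3_{\mathbb{C}}$ intrinsically in the CR, rather than purely Möbius, setting — and (ii) upgrading the ambient $O(6)$-rigidity in the equality case to rigidity within $Aut_{CR}(\mathbb{S}^5)$, which uses that the eigenimmersion must respect the Legendrian (hence complex) structure.
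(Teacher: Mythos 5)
Your proposal is correct and follows essentially the same route as the paper: the CR-balancing via a degree argument, the Rayleigh-quotient/conformal-invariance comparison giving $\lambda_{CR}[\Sigma]\geq \tfrac{1}{2}\lambda_1(g_H)|M|_{g_H}=|\Sigma_H|_{\mathbb{S}}$, and rigidity via the six-dimensional first eigenspace of the flat hexagonal torus. The only difference is organizational — the paper packages the balancing and eigenvalue steps as Lemma \ref{BalanceLem} and Proposition \ref{CRVolumeLowBoundProp} and simply cites them here, and it invokes Theorem \ref{intro_thm_lag} (minimality) to mediate the equality case before applying the eigenfunction rigidity.
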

 \begin{rem}
 For any $\Sigma\subset \mathbb{S}^5$ conformally parameterized by the flat torus $\mathbb{C}/\Lambda_H$, $$
 	\mathcal{W}[\Sigma]\geq \lambda_c[\Sigma]\geq \lambda_c[\Sigma_H].$$ This is shown in \cite[Corollary 6]{montielMinimalImmersionsSurfaces1986}.  See also  \cite{bryantConformalVolume2tori2015}.
 \end{rem}
 


\subsection*{Acknowledgements} The authors are grateful to Robert Bryant and Mark Haskins for their helpful comments and suggestions. 
The first author was supported by the NSF Grant DMS-1904674 and  DMS-2203132 and the Institute for Advanced Study with funding provided by the Charles Simonyi Endowment.
The second author acknowledges the support of the Simons Foundation Grant MP-TSM-00002933 and funding from the Bill Guthridge distinguished professorship fund.

	\section{Geometric background}
For the readers' convenience, we summarize some of the relevant background -- see also Sections 2.1 and 2.2 of \cite{BernBhattCplxHyp}.
In particular, we recall the natural contact, Sasaki, and CR-structures that come from viewing $\mathbb{S}^{2n+1}\subset \Real^{2n+2}\simeq \mathbb{C}^{n+1}$  as the boundary of the unit ball $\mathbb{B}^{2n+2}\simeq \mathbb{B}_{\mathbb{C}}^{n+1}$.  Our main sources are \cite{BlairBookNew, Tanno1989}. 

\subsection{Basic constructs on Euclidean space}

Let us first introduce notation for various structures on  $\Real^{2n+2}\simeq \Real^{n+1}_{\mathbf{x}}\times \Real^{n+1}_{\mathbf{y}}\simeq \mathbb{C}^{n+1}_{\mathbf{z}}$.  Here we make the identification of $(\mathbf{x}, \mathbf{y})\in  \Real^{n+1}\times  \Real^{n+1}$ with $\mathbb{C}^{n+1}$ via
$$
\mathbf{z}=\mathbf{x}+i \mathbf{y}.
$$
In particular, the Euclidean coordinates given by $x_1, \ldots, x_{n+1}, y_1, \ldots,  y_{n+1}$, correspond to the holomorphic coordinates $z_1=x_1+i y_1, \ldots, z_{n+1}=x_{n+1}+iy_{n+1}$.

Denote the usual Euclidean Riemannian metric and symplectic form by
\begin{align*} g_{\Real}=\sum_{j=1}^{n+1} \left( dx_j^2+dy_j^2\right)\mbox{ and } \omega_{\Real}=\sum_{j=1}^{n+1} dx_j\wedge dy_j.
\end{align*}
Let $J_{\Real}$ be the associated almost complex structure on $\Real^{2n+2}$  defined either by
$$
\omega_{\Real}(X,Y)=g_{\Real}(X,J_{\Real}(Y)) \mbox{ or }
$$
$$
J_{\Real}\left(\frac{\partial}{\partial x_j }\right)=-\frac{\partial}{\partial y_j} \mbox{ and } J_{\Real}\left(\frac{\partial}{\partial y_j}\right)=\frac{\partial}{\partial x_j}.
$$
In this convention the complexification of $J_{\Real}$ satisfies $J_{\Real}\left(\frac{\partial}{\partial z_j}\right)=-i \frac{\partial}{\partial z_j}$.
Likewise, 
$$
dx_j\circ J_{\Real}=dy_j \mbox{ and } dy_j \circ J_{\Real} =-dx_j.
$$

Let $r:\Real^{2n+2}\to \Real$ be the radial function defined by
$$
r^2=x_1^2+y_1^2+\cdots+x_{n+1}^2+y_{n+1}^2=|\mathbf{x}|^2+|\mathbf{y}|^2=|\mathbf{z}|^2.
$$
Consider the smooth one-forms on $\Real^{2n+2}\setminus \set{0}$
\begin{align*}
dr &=\frac{1}{r} \left(x_1 dx_1 +y_1 dy_1+\cdots +x_{n+1} dx_{n+1}+y_{n+1} dy_{n+1}\right)\\
\theta&= \frac{1}{r} dr \circ J_{\mathbb{R}}=\frac{1}{r^2} \left(x_1 dy_1 -y_1 dx_1+\cdots +x_{n+1} dy_{n+1}-y_{n+1} dy_{n+1}\right).
\end{align*}
Define a symmetric $(0,2)$ tensor field on $\Real^{2n+2}\setminus \set{0}$ by, 
\begin{align*}
\eta&=\frac{1}{r^2}\sum_{j, k=1, j\neq k}^{n+1} \left( (x_j x_k +y_j y_k) (dx_j dx_k+dy_j dy_k) +2 x_j y_k (dx_j dy_k -dy_j dx_k)\right).
\end{align*}
One readily computes that
$$
g_{\Real}=dr^2+r^2\left( \theta^2+\eta\right) \mbox{ and } \omega_{\Real}= \frac{1}{2}r^2d\theta + r dr \wedge \theta.
$$
Hence, for $X,Y\in T_p \Real^{2n+2}$, 
\begin{equation} \label{LeviFormEqn}
-\frac{1}{2} d\theta(X, J_{\Real}(Y))= -\frac{1}{r^2}\omega_{\Real}(X, J_{\Real}(Y))+ \frac{1}{r} (dr \wedge \theta)(X, J_{\Real}(Y))= \eta(X,Y).
\end{equation}
It is convenient to introduce vector fields
$$
\mathbf{X} =\sum_{j=1}^{n+1} \left(x_j  \frac{\partial}{\partial x_j}+y_j  \frac{\partial}{\partial y_j}\right) \mbox{ and }\mathbf{T}= \sum_{j=1}^{n+1} \left(x_j  \frac{\partial}{\partial y_j}-y_j  \frac{\partial}{\partial x_j}\right) =-J_{\Real}(\mathbf{X})
$$
where $\mathbf{X}$ is the position vector and $\mathbf{T}$ is a Killing vector field. 
They satisfy
$$
dr(X)= \frac{1}{r}g_{\Real}(X, \mathbf{X}) \mbox{ and } \theta(X)=\frac{1}{r^2} g_{\Real}(X, \mathbf{T}).
$$

If $X$ is any vector field on $\Real^{2n+2}$ and $\nabla^{\Real}$ is the Levi-Civita connection of $g_{\Real}$, then, because $J_{\Real}$ is $\nabla^\Real$ parallel, 
$$
\nabla_X^{\Real} \mathbf{X}= X \mbox{ and }\nabla_{X}^\Real \mathbf{T}=\nabla_{X}^\Real \left(-J_{\Real}(\mathbf{X})\right)=-J_{\Real}(\nabla_X^\Real \mathbf{X})=-J_{\Real}(X).
$$
\subsection{Contact, Sasaki and CR-geometry of $\mathbb{S}^{2n+1}$}
By thinking of $\mathbb{S}^{2n+1}$ as the boundary of the ball $\mathbb{B}^{2n+2}$ we may endow it with a natural Sasaki structure.  This comes along with associated CR and contact structures. 

Let $\hat{\theta}$ be the pullback of $\theta$ to $\mathbb{S}^{2n+1}$. It is a contact form on $\mathbb{S}^{2n+1}$ with Reeb vector field $\hat{\mathbf{T}}$, the restriction of the tangential vector field $\mathbf{T}$.  Denote by $\mathcal{H}\subset T\mathbb{S}^{2n+1}$ the contact distribution associated to $\hat{\theta}$, that is, the vector bundle over $\mathbb{S}^{2n+1}$ satisfying,  for each $p\in \mathbb{S}^{2n+1}$, 
\begin{equation}
\mathcal{H}_p=\ker \hat{\theta}_p= \set{X\in T_p \mathbb{S}^{2n+1}: \hat{\theta}_p(X)=0}\subset T_p \mathbb{S}^{2n+1}. \label{Hp}
\end{equation}

Let $g_{\mathbb{S}}$ denote the round metric on $\mathbb{S}^{2n+1}$ induced from $g_{\Real}$.  It is clear that $\mathcal{H}$ is $g_{\mathbb{S}}$ orthogonal to $\hat{\mathbf{T}}$.  It follows that $J_{\mathcal{H}}=J_{\Real}|_\mathcal{H}$ is a bundle automorphism of $\mathcal{H}$.  We may extend this to a bundle map $J_{\mathbb{S}}: T\mathbb{S}^{2n+1}\to T\mathbb{S}^{2n+1}$ by setting $J_{\mathbb{S}}(\hat{\mathbf{T}})=\mathbf{0}$.  

The triple $(J_{\mathbb{S}}, \hat{\mathbf{T}}, \hat{\theta})$ is, in the language of \cite{BlairBookNew}, an \emph{almost contact structure} on $\mathbb{S}^{2n+1}$.  In fact, together with the metric $g_{\mathbb{S}}$ this is an \emph{almost contact metric structure} and this almost contact metric structure is also \emph{Sasakian}. Indeed, by \cite[Theorem 6.3]{BlairBookNew}, if  $\nabla^{\mathbb{S}}$ is the Levi-Civita connection of $g_{\mathbb{S}}$, then it suffices to check
$$
(\nabla^{\mathbb{S}}_X  J_{\mathbb{S}})  Y= g_{\mathbb{S}}(X,Y) \hat{\mathbf{T}} -\hat{\theta}(Y) X,
$$
holds for $X,Y\in T_p \mathbb{S}^{n+1}$.
This follows from $\nabla^\Real_X J_{\Real}=0$. Hence, for $X\in T_p \mathbb{S}^{2n+1}$, 
$$
\nabla_X^{\mathbb{S}} \hat{\mathbf{T}}= -J_{\mathbb{S}}(X).
$$

In a similar vein, by using the identification $\mathbb{S}^{2n+1}\simeq \partial \mathbb{B}_{\mathbb{C}}^{n+1}$ where 
$$
\mathbb{B}^{n+1}_{\mathbb{C}}=\set{(z_1, \ldots, z_{n+1}): |z_1|^2+\ldots +|z_{n+1}|^2<1}\subset \mathbb{C}^{n+1},
$$
is the unit complex ball, 
one may interpret $(\mathcal{H}, J_{\mathbb{S}})$ as a CR-structure.  In this case, $\hat{\theta}$ is a  pseudo-convex pseudo-hermitian form, as the \emph{Levi form}, $L_{\hat{\theta}}$, is a positive definite inner product on $\mathcal{H}$ -- see \cite{dragomirDifferentialGeometryAnalysis2006}.  

Given a $C^1$ function defined on $\mathbb{S}^{2n+1}$ we denote
$$
\nabla^{\mathcal{H}} f= \nabla^{\mathbb{S}} f - g_{\mathbb{S}} (\nabla^{\mathbb{S}}f, \hat{\mathbf{T}})\hat{\mathbf{T}}
$$
where $\mathcal{H}$ is the tangential component of the gradient.

\subsection{Complex automorphisms of unit ball in $\mathbb{C}^{n+1}$ and CR-automorphisms of $\mathbb{S}^{2n+1}$} \label{backgroundCR}
We denote by $Aut_{\mathbb{C}}(\mathbb{B}_{\mathbb{C}}^{n+1})$ the group of biholomorphic automorphisms of the unit disk.  We refer to \cite{RudinDiskBook} and \cite{GoldmanCHBook} for properties of this group, but summarize some of the needed facts.

First of all, we observe that for any $A\in \mathbf{U}(n+1)$, a unitary matrix, the map
$$
\Phi_{A}: \mathbf{z}\mapsto A\cdot \mathbf{z}
$$
is an element of $Aut_{\mathbb{C}}(\mathbb{B}_{\mathbb{C}}^{n+1})$.  Secondly, for any fixed $\mathbf{b}\in \mathbb{B}^{n+1}_{\mathbb{C}}$ there is an element $\Phi_{\mathbf{b}}\in Aut_{\mathbb{C}}(\mathbb{B}_{\mathbb{C}}^{n+1})$ given by
\begin{align*}
\Phi_{\mathbf{b}}:\mathbf{z}\mapsto \sqrt{1-|\mathbf{b}|^2} \frac{\mathbf{z}}{1+ \bar{\mathbf{b}}\cdot \mathbf{z}}+\frac{1}{1+\sqrt{1-|\mathbf{b}|^2}}\left(1+\frac{\sqrt{1-|\mathbf{b}|^2}}{1+\bar{\mathbf{b}}\cdot \mathbf{z}} \right) \mathbf{b}.
\end{align*}
This map can also be expressed as
\begin{align*}
\Phi_{\mathbf{b}}(\mathbf{z})	&=\sqrt{1-|\mathbf{b}|^2} \frac{\mathbf{z}+\mathbf{b}}{1+ \bar{\mathbf{b}}\cdot \mathbf{z}}+\frac{1}{1+\sqrt{1-|\mathbf{b}|^2}}\frac{|\mathbf{b}|^2+\bar{\mathbf{b}}\cdot {\mathbf{z}}}{1+\bar{\mathbf{b}}\cdot \mathbf{z}}  \mathbf{b}.
\end{align*}
Using the identification $\mathbb{B}_{\mathbb{C}}^{n+1} \simeq \mathbb{B}^{2n+2}$, we may also think  of $\Phi_{\mathbf{b}}$ as a self-diffeomorphism of $\mathbb{B}^{2n+2}$ that is a $J_{\Real}$-holomorphic automorphism of $\mathbb{B}^{2n+2}$, in the sense that
$$
J_{\Real}\circ D_p \Phi =D_p\Phi\circ J_{\Real}.
$$
We identify  $Aut_{\mathbb{C}}(\mathbb{B}_{\mathbb{C}}^{n+1})$ with $Aut_{J}(\mathbb{B}^{2n+2})$, the group of $J_{\Real}$-holomorphic automorphisms of the ball.

Every element $\Phi\in Aut_{\mathbb{C}}(\mathbb{B}_{\mathbb{C}}^{n+1})$ satisfies 
\begin{equation}\label{StructureAutCEqn}
\Phi=\Phi_A\circ \Phi_{\mathbf{b}}
\end{equation}
for some $A\in \mathbf{U}(n+1)$ and $\mathbf{b}\in \mathbb{B}_{\mathbb{C}}^{n+1}$. 
It follows from \eqref{StructureAutCEqn} and \cite[Proposition 2.1]{BernBhattCplxHyp} that every element $\Phi\in Aut_{J}(\mathbb{B}^{2n+2})$ extends smoothly to a map $\bar{\Phi}: \bar{\mathbb{B}}^{2n+2}\to \bar{\mathbb{B}}^{2n+2}$.  We write $Aut_{J}(\bar{\mathbb{B}}^{2n+2})$ for the group of extended maps. The maps $\bar{\Phi}\in Aut_{J}(\bar{\mathbb{B}}^{2n+2})$ have the additional property that $\Psi=\bar{\Phi}|_{\mathbb{S}^{2n+1}}$ is a diffeomorphism of $\mathbb{S}^{2n+1}$ to itself, which is a \emph{CR-automorphism} of $\mathbb{S}^{2n+1}$, that is,
$$
D_p{\Psi}(\mathcal{H}_p)=\mathcal{H}_{{\Psi}(p)}  \mbox{ and on $\mathcal{H}_p$, }
J_{\mathbb{S}}\circ D_p{\Psi}=D_p{\Psi} \circ J_{\mathbb{S}}.
$$
Let us denote the set of such maps by $Aut_{CR}(\mathbb{S}^{2n+1})$.  For $\mathbf{b}=\mathbf{b}_1+i \mathbf{b}_2\in \mathbb{B}_{\mathbb{C}}^{n+1}$, let $\Psi_{\mathbf{b}}\in Aut_{CR}(\mathbb{S}^{2n+1})$ be the restriction of $\bar{\Phi}_{\mathbf{b}}\in Aut_J(\mathbb{B}^{2n+2})$.  For $A\in \mathbf{U}(n+1)$ define $\Psi_A\in Aut_{CR}(\mathbb{S}^{2n+1})$ in the same manner.
We note that if $\mathbf{b}_0\in \partial \mathbb{B}_{\mathbb{C}}^{n+1}$, 
\begin{equation} \label{ConvergencePsibEqn}
\lim_{\mathbf{b}\to \mathbf{b}_0} \Psi_{\mathbf{b}}(\mathbf{z})= \left\{\begin{array}{cc} \mathbf{b}_0 & \mathbf{z}\in \mathbb{S}^{2n+1}\setminus \set{\mathbf{b}_0} \\ DNE & \mathbf{z}=-\mathbf{b}_0. \end{array} \right.
\end{equation}
Moreover, the convergence is uniform on compact subsets of $\mathbb{S}^{2n+1}\setminus \set{\mathbf{b}_0}$.
If $\mathbf{v}$ satisfies $\bar{\mathbf{v}}\cdot \mathbf{b}_0+\mathbf{v}\cdot \bar{\mathbf{b}}_0<0$, then for $\epsilon>0$ sufficiently small $\mathbf{b}_0+\epsilon \mathbf{v}\in \mathbb{B}_{\mathbb{C}}^{n+1}$ and
$$
\lim_{\epsilon \to 0^+} \Psi_{\mathbf{b}_0+\epsilon \mathbf{v}}(-\mathbf{b}_0)=-\frac{\mathbf{v}\cdot \bar{\mathbf{b}}_0 }{  \mathbf{b}_0\cdot\bar{\mathbf{v}}} \mathbf{b}_0.
$$
Here, the right-hand side can be any point on $\mathbb{S}^{2n+1}\cap P\setminus \set{\mathbf{b}_0}$, where $P$ is the plane spanned by $\mathbf{b}_0$ and $i\mathbf{b}_0$.

\subsection{Contact gauge transformations and their Levi-Civita connections}\label{ContactGaugeConnectionSec}
Fix $\mathbf{b}=\mathbf{b}_1+i  \mathbf{b}_2\in \mathbb{B}^{n+1}_{\mathbb{C}}$  which we also think of as $(\mathbf{b}_1, \mathbf{b}_2)\in \mathbb{B}^{2n+2}$, 
 and denote by
\begin{align*}
\mathbf{S}_{\mathbf{b}}&=	-\frac{1}{2}J_{\mathbb{S}} (\nabla^{\mathcal{H}} \log W_{\mathbf{b}}) \mbox{ and }\\
\hat{\omega}_{\mathbf{b}}(X)&=g_{\mathbb{S}}(\mathbf{S}_{\mathbf{b}}, X)=\frac{1}{2}d\log W_{\mathbf{b}}( J_{\mathbb{S}}(X)).
\end{align*}
The vector field and associated one-form derived from $J_{\mathbb{S}}$ and the weight 
\begin{align*}
W_{\mathbf{b}}&=\frac{1-|\mathbf{b}_1|^2-|\mathbf{b}_2|^2}{ (1+\mathbf{b}_1\cdot \mathbf{x}+\mathbf{b}_2\cdot \mathbf{y})^2+(\mathbf{b}_1\cdot \mathbf{y} -\mathbf{b}_2\cdot \mathbf{x})^2}\\
&=\frac{1-|\mathbf{b}|^2}{ (1+g_{\Real}(\mathbf{b}, \mathbf{X}))^2+(g_{\Real}(J_{\Real}(\mathbf{b}), \mathbf{X}))^2}.
\end{align*}
Consider the family of metrics on $\mathbb{S}^{2n+1}$
$$
g_{\mathbb{S}}^{\mathbf{b}}=W_{\mathbf{b}}\left( \hat{\eta}+ \hat{\omega}_{\mathbf{b}} \cdot \hat{\theta}+ \hat{\theta}\cdot \hat{\omega}_{\mathbf{b}}+ \left( W_{\mathbf{b}} +\frac{1}{4}|\nabla^{\mathcal{H}}\log W_{\mathbf{b}}|^2_{\mathbb{S}}\right) \hat{\theta}^2\right).
$$
As shown in  \cite[Proposition 2.2]{BernBhattCplxHyp} these metrics are precisely those satisfying
$$
g_{\mathbb{S}}^{\mathbf{b}}=\Psi_{\mathbf{b}}^* g_{\mathbb{S}}
$$
and are examples of \emph{gauge transformations of contact Riemannian structures} introduced by Tanno \cite[Section 9]{Tanno1989}.  Observe  that 
\begin{equation} \label{ConformalMetEqn}
g_{\mathbb{S}}^{\mathbf{b}}|_{\mathcal{H}_p}= W_{\mathbf{b}} g_{\mathbb{S}}|_{\mathcal{H}_p},
\end{equation}
and so the $\Psi=\Psi_A\circ \Psi_{\mathbf{b}}\in Aut_{CR}(\mathbb{S}^{2n+1})$ act conformally on the horizontal bundle. 

 Let $\nabla^{\mathbf{b}}$ denote the Levi-Civita connection associated to the metric $g_{\mathbb{S}}^{\mathbf{b}}$.       We wish to compare $\nabla^{\mathbf{b}}$ and $\nabla^{\mathbb{S}}$,  the Levi-Civita connection of $g_{\mathbb{S}}$.
First, we observe that for appropriate deformation of auxiliary data, one obtains a Sasakian almost contact metric structure compatible with $g_{\mathbb{S}}^{\mathbf{b}} $ -- we omit the proof as it is straightforward.     
\begin{lem}\label{SasakianLem} Both quadruples $( J_{\mathbb{S}}, \hat{\mathbf{T}}, \hat{\theta},g_{\mathbb{S}})$ and $( J_{\mathbb{S}}^{\mathbf{b}}, \hat{\mathbf{T}}_{\mathbf{b}}, \hat{\theta}_{\mathbf{b}},g_{\mathbb{S}}^{\mathbf{b}})$ are Sasakian where
	\begin{align*}
	\hat{\theta}_{\mathbf{b}}=W_{\mathbf{b}} \hat{\theta} &=\Psi_{\mathbf{b}}^*\hat{\theta},	\hat{\mathbf{T}}_{\mathbf{b}}=\frac{1}{W_{\mathbf{b}}}(\hat{\mathbf{T}}-\mathbf{S}_{\mathbf{b}})= (\Psi_{\mathbf{b}}^{-1})_* \hat{\mathbf{T}}   \mbox{ and }  \\
	 J_{\mathbb{S}}^{\mathbf{b}}&=J_{\mathbb{S}} +\frac{1}{2} \nabla^{\mathcal{H}}\log W_{\mathbf{b}} \otimes \hat{\theta}.
\end{align*}
	In particular, for $X\in \mathcal{H}_p$, $J_{\mathbb{S}}(X)=J_{\mathbb{S}}^{\mathbf{b}}(X)$ and for $Y \in T_p \mathbb{S}^{2n+1}$,
	$$
	\nabla^{\mathbf{b}}_Y\hat{\mathbf{T}}_{\mathbf{b}} = -J_{\mathbb{S}}^{\mathbf{b}}(Y)\mbox{ and } \nabla^{\mathbb{S}}_Y\hat{\mathbf{T}}=-J_{\mathbb{S}}(Y).
	$$ 
\end{lem}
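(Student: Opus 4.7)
The first quadruple is already Sasakian by the discussion preceding the lemma, so it suffices to handle the second. The cleanest strategy is to use naturality: since $g_{\mathbb{S}}^{\mathbf{b}} = \Psi_{\mathbf{b}}^* g_{\mathbb{S}}$ by \cite[Proposition 2.2]{BernBhattCplxHyp} and Sasakian almost contact metric structures pull back to Sasakian structures under diffeomorphisms, the quadruple
$$\bigl(\Psi_{\mathbf{b}}^* J_{\mathbb{S}},\; (\Psi_{\mathbf{b}}^{-1})_* \hat{\mathbf{T}},\; \Psi_{\mathbf{b}}^* \hat{\theta},\; g_{\mathbb{S}}^{\mathbf{b}}\bigr)$$
is automatically Sasakian. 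Accordingly, the proof reduces to identifying each pulled-back tensor with the explicit formula appearing in the statement.

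For the pseudo-hermitian form, $\Psi_{\mathbf{b}}^* \hat{\theta} = W_{\mathbf{b}} \hat{\theta}$ is a direct computation from the explicit $\Psi_{\mathbf{b}}$ of Section \ref{backgroundCR}, or it can be read off the $\hat{\theta}^2$ coefficient combined with the horizontal conformality \eqref{ConformalMetEqn} in the displayed formula for $g_{\mathbb{S}}^{\mathbf{b}}$. For the Reeb vector field, naturality says $(\Psi_{\mathbf{b}}^{-1})_* \hat{\mathbf{T}}$ is the Reeb field of $\hat{\theta}_{\mathbf{b}}$ in $(\mathbb{S}^{2n+1}, g_{\mathbb{S}}^{\mathbf{b}})$, so it suffices to verify the two Reeb axioms for $V := W_{\mathbf{b}}^{-1}(\hat{\mathbf{T}} - \mathbf{S}_{\mathbf{b}})$: the normalization $\hat{\theta}_{\mathbf{b}}(V) = 1$ is immediate from $\mathbf{S}_{\mathbf{b}} \in \mathcal{H}$, while the kernel condition $\iota_V d\hat{\theta}_{\mathbf{b}} = 0$ is the main computation. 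For $J_{\mathbb{S}}^{\mathbf{b}}$, the CR-automorphism property gives $\Psi_{\mathbf{b}}^* J_{\mathbb{S}} = J_{\mathbb{S}}$ on $\mathcal{H}$, and the almost contact axiom forces $J_{\mathbb{S}}^{\mathbf{b}}(\hat{\mathbf{T}}_{\mathbf{b}}) = 0$. Combined with the decomposition $\hat{\mathbf{T}} = W_{\mathbf{b}} \hat{\mathbf{T}}_{\mathbf{b}} + \mathbf{S}_{\mathbf{b}}$ and the identity $J_{\mathbb{S}}(\mathbf{S}_{\mathbf{b}}) = \tfrac12 \nabla^{\mathcal{H}} \log W_{\mathbf{b}}$ (which uses $J_{\mathbb{S}}^2 = -I$ on $\mathcal{H}$), this determines the claimed formula $J_{\mathbb{S}}^{\mathbf{b}} = J_{\mathbb{S}} + \tfrac12 \nabla^{\mathcal{H}} \log W_{\mathbf{b}} \otimes \hat{\theta}$.

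The main obstacle is thus the verification $\iota_V d\hat{\theta}_{\mathbf{b}} = 0$. Expanding $d\hat{\theta}_{\mathbf{b}} = dW_{\mathbf{b}} \wedge \hat{\theta} + W_{\mathbf{b}} d\hat{\theta}$ and contracting with $\hat{\mathbf{T}} - \mathbf{S}_{\mathbf{b}}$, one uses $\iota_{\hat{\mathbf{T}}} d\hat{\theta} = 0$ (the Reeb property for $\hat{\theta}$), $\hat{\theta}(\mathbf{S}_{\mathbf{b}}) = 0$, and the Levi-form identity \eqref{LeviFormEqn} to reduce matters to $\iota_{\mathbf{S}_{\mathbf{b}}} d\hat{\theta}|_{\mathcal{H}} = -dW_{\mathbf{b}}|_{\mathcal{H}}/W_{\mathbf{b}}$, which is precisely the content of the definition $\mathbf{S}_{\mathbf{b}} = -\tfrac12 J_{\mathbb{S}}(\nabla^{\mathcal{H}} \log W_{\mathbf{b}})$; this supplies the needed cancellation. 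One further needs $\hat{\mathbf{T}}(W_{\mathbf{b}}) = 0$, which is visible from the explicit formula for $W_{\mathbf{b}}$ (the Reeb flow is the Hopf action, under which $W_{\mathbf{b}}$ is invariant). The concluding connection identities $\nabla^{\mathbf{b}}_Y \hat{\mathbf{T}}_{\mathbf{b}} = -J_{\mathbb{S}}^{\mathbf{b}}(Y)$ and $\nabla^{\mathbb{S}}_Y \hat{\mathbf{T}} = -J_{\mathbb{S}}(Y)$ are then the standard Sasakian identity $\nabla \xi = -\phi$ applied to each quadruple, as already invoked for the first one in the subsection above. Keeping the various signs and the factor $2$ from the Levi form consistent is the only real source of potential error.
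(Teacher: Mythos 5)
The paper gives no proof of this lemma --- it is explicitly omitted as ``straightforward'' --- so there is nothing to compare against; your naturality argument (pull back the Sasakian quadruple $(J_{\mathbb{S}},\hat{\mathbf{T}},\hat{\theta},g_{\mathbb{S}})$ by $\Psi_{\mathbf{b}}$, use $g_{\mathbb{S}}^{\mathbf{b}}=\Psi_{\mathbf{b}}^*g_{\mathbb{S}}$, and identify the pulled-back tensors with the stated formulas) is the natural route and is essentially correct. The identification of $\Psi_{\mathbf{b}}^*\hat{\theta}$, the determination of $J_{\mathbb{S}}^{\mathbf{b}}$ from the splitting $\hat{\mathbf{T}}=W_{\mathbf{b}}\hat{\mathbf{T}}_{\mathbf{b}}+\mathbf{S}_{\mathbf{b}}$ together with $J_{\mathbb{S}}(\mathbf{S}_{\mathbf{b}})=\tfrac12\nabla^{\mathcal{H}}\log W_{\mathbf{b}}$, and the reduction of the Reeb condition on $\mathcal{H}$ to $\iota_{\mathbf{S}_{\mathbf{b}}}d\hat{\theta}|_{\mathcal{H}}=-d\log W_{\mathbf{b}}|_{\mathcal{H}}$ via the Levi form \eqref{LeviFormEqn} are all sound.

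One assertion is wrong, however: $\hat{\mathbf{T}}\cdot W_{\mathbf{b}}\neq 0$ in general. The weight $W_{\mathbf{b}}=(1-|\mathbf{b}|^2)/|1+\bar{\mathbf{b}}\cdot \mathbf{z}|^2$ is not invariant under the Hopf action, and indeed the paper itself computes $\hat{\mathbf{T}}\cdot W_{\mathbf{b}}^{-1}=\tfrac{2}{1-|\mathbf{b}|^2}\,J_{\Real}(\mathbf{b})\cdot\mathbf{X}$ in the proof of Proposition \ref{divJHprop}, which does not vanish identically. Fortunately the identity is not actually needed: writing $V=W_{\mathbf{b}}^{-1}(\hat{\mathbf{T}}-\mathbf{S}_{\mathbf{b}})$ and contracting $d(W_{\mathbf{b}}\hat{\theta})=dW_{\mathbf{b}}\wedge\hat{\theta}+W_{\mathbf{b}}\,d\hat{\theta}$ with $V$, the condition $\iota_V d\hat{\theta}_{\mathbf{b}}=0$ becomes
$$
-\iota_{\mathbf{S}_{\mathbf{b}}}d\hat{\theta}=d\log W_{\mathbf{b}}-\big((\hat{\mathbf{T}}-\mathbf{S}_{\mathbf{b}})\cdot\log W_{\mathbf{b}}\big)\,\hat{\theta},
$$
and when this is evaluated on $\hat{\mathbf{T}}$ the two occurrences of $\hat{\mathbf{T}}\cdot\log W_{\mathbf{b}}$ cancel, leaving only $\mathbf{S}_{\mathbf{b}}\cdot\log W_{\mathbf{b}}=-\tfrac12 g_{\mathbb{S}}\big(J_{\mathbb{S}}(\nabla^{\mathcal{H}}\log W_{\mathbf{b}}),\nabla^{\mathcal{H}}\log W_{\mathbf{b}}\big)=0$ by antisymmetry of $J_{\mathbb{S}}$, which matches $d\hat{\theta}(\hat{\mathbf{T}},\mathbf{S}_{\mathbf{b}})=0$ on the left. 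So you should delete the false claim and replace it with this cancellation; with that repair the rest of your argument stands.
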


There is a simple relationship between $\nabla^{\mathbf{b}}$ and $\nabla^{\mathbb{S}}$ for sections of $\mathcal{H}$.
\begin{prop}\label{ConnectionProp}
	Let $Y\in \Gamma( \mathcal{H})$ be a (local) smooth section of the horizontal bundle $\mathcal{H}$ defined near $p\in \mathbb{S}^{2n+1}$ and $X\in \mathcal{H}_p$ be a vector. We have
	\begin{align*}
		\nabla^{\mathbf{b}}_X Y -\nabla^{\mathbb{S}}_X Y&= \frac{1}{2}\big( (X \cdot \log W_{\mathbf{b}}) Y +(Y\cdot \log W_{\mathbf{b}} )X\big)\\
			& -g_{\mathbb{S}}(\mathbf{S}_{\mathbf{b}},Y) J_{\mathbb{S}} (X)- g_{\mathbb{S}}(\mathbf{S}_{\mathbf{b}},X) J_{\mathbb{S}}(Y)- g_{\mathbb{S}}(X,Y) J_{\mathbb{S}}(\mathbf{S}_{\mathbf{b}}).
		\end{align*}
\end{prop}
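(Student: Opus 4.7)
My plan is to use Koszul's formula applied to $g_{\mathbb{S}}^{\mathbf{b}}$ and compare directly with $\nabla^{\mathbb{S}}$. The starting point is the standard identity for the difference tensor of two torsion-free connections: setting $A(X,Y) = \nabla^{\mathbf{b}}_X Y - \nabla^{\mathbb{S}}_X Y$, one has
$$
2\,g_{\mathbb{S}}^{\mathbf{b}}(A(X,Y),Z) = (\nabla^{\mathbb{S}}_X g_{\mathbb{S}}^{\mathbf{b}})(Y,Z) + (\nabla^{\mathbb{S}}_Y g_{\mathbb{S}}^{\mathbf{b}})(X,Z) - (\nabla^{\mathbb{S}}_Z g_{\mathbb{S}}^{\mathbf{b}})(X,Y),
$$
obtained by writing the Koszul formula for $\nabla^{\mathbf{b}}$ and eliminating the Lie brackets using $[U,V] = \nabla^{\mathbb{S}}_U V - \nabla^{\mathbb{S}}_V U$. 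The right-hand side is tensorial in $X, Y, Z$, so I may freely extend $X$ and $Y$ to horizontal sections near $p$ and evaluate pointwise. Varying $Z$ over $\mathcal{H}_p$ and $Z = \hat{\mathbf{T}}$ determines $A(X,Y)$ via $g_{\mathbb{S}}^{\mathbf{b}}$.

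Next, I would compute $\nabla^{\mathbb{S}}_U g_{\mathbb{S}}^{\mathbf{b}}$ from the stated expression for $g_{\mathbb{S}}^{\mathbf{b}}$ by applying the product rule. The essential ingredients are the Sasakian identity $\nabla^{\mathbb{S}}_U \hat{\mathbf{T}} = -J_{\mathbb{S}}(U)$ from Lemma~\ref{SasakianLem}, its consequence $(\nabla^{\mathbb{S}}_U \hat{\theta})(V) = g_{\mathbb{S}}(U, J_{\mathbb{S}}(V))$ (since $\hat{\theta}$ is the $g_{\mathbb{S}}$-metric dual of $\hat{\mathbf{T}}$), the identity $\hat{\omega}_{\mathbf{b}}(V) = g_{\mathbb{S}}(\mathbf{S}_{\mathbf{b}}, V) = \tfrac{1}{2} d\log W_{\mathbf{b}}(J_{\mathbb{S}}(V))$, and $J_{\mathbb{S}}(\mathbf{S}_{\mathbf{b}}) = \tfrac{1}{2}\nabla^{\mathcal{H}}\log W_{\mathbf{b}}$ (from $J_{\mathbb{S}}^2 = -\mathrm{Id}$ on $\mathcal{H}$). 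On the horizontal test vectors the component $\hat{\eta}$ of $g_{\mathbb{S}}^{\mathbf{b}}$ agrees with $g_{\mathbb{S}}$, so only the $W_{\mathbf{b}}$ factor in front of $\hat{\eta}$ contributes a derivative; the $\hat{\theta}^2$ component contributes nothing when contracted against horizontal vectors; the cross terms $\hat{\omega}_{\mathbf{b}} \cdot \hat{\theta}$ contribute through $(\nabla^{\mathbb{S}}_U \hat{\theta})(V)$ by turning a $J_{\mathbb{S}}$ onto one horizontal argument.

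Testing against $Z \in \mathcal{H}_p$, I expect the three groups of terms to organize as follows: the derivatives of $W_{\mathbf{b}}\,\hat{\eta}$ yield the pure conformal-change contribution
$$
\frac{1}{2}\Big((X\log W_{\mathbf{b}})\,Y + (Y\log W_{\mathbf{b}})\,X - g_{\mathbb{S}}(X,Y)\,\nabla^{\mathcal{H}}\log W_{\mathbf{b}}\Big),
$$
which, after substituting $\tfrac{1}{2}\nabla^{\mathcal{H}}\log W_{\mathbf{b}} = J_{\mathbb{S}}(\mathbf{S}_{\mathbf{b}})$, reproduces the first and last terms of the claimed formula; the $\hat{\omega}_{\mathbf{b}} \cdot \hat{\theta}$ cross terms, after applying $(\nabla^{\mathbb{S}}_U \hat{\theta})(V) = g_{\mathbb{S}}(U, J_{\mathbb{S}}(V))$ with $U \in \{X, Y, Z\}$ and $V$ horizontal, combine to $-g_{\mathbb{S}}(\mathbf{S}_{\mathbf{b}}, Y) J_{\mathbb{S}}(X) - g_{\mathbb{S}}(\mathbf{S}_{\mathbf{b}}, X) J_{\mathbb{S}}(Y)$. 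For the consistency check $Z = \hat{\mathbf{T}}$, I would pair the formula above with $\hat{\mathbf{T}}$ under $g_{\mathbb{S}}^{\mathbf{b}}$ and verify agreement using $g_{\mathbb{S}}^{\mathbf{b}}(V, \hat{\mathbf{T}}) = W_{\mathbf{b}} \hat{\omega}_{\mathbf{b}}(V)$ on horizontal $V$, and $g_{\mathbb{S}}^{\mathbf{b}}(\hat{\mathbf{T}}, \hat{\mathbf{T}}) = W_{\mathbf{b}}(W_{\mathbf{b}} + \tfrac{1}{4}|\nabla^{\mathcal{H}}\log W_{\mathbf{b}}|^2)$.

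The main obstacle is bookkeeping: the three algebraically similar quantities $(X\log W_{\mathbf{b}})\hat{\omega}_{\mathbf{b}}(Y)$, $\hat{\omega}_{\mathbf{b}}(\nabla^{\mathbb{S}}_X Y)$, and the $J_{\mathbb{S}}$-twisted pairings $g_{\mathbb{S}}(X, J_{\mathbb{S}}(Y))\,\hat{\omega}_{\mathbf{b}}(Z)$ must be tracked with care to see that the terms containing undifferentiated $\hat{\omega}_{\mathbf{b}}$ cancel in the symmetrization in $X, Y$ and in the antisymmetrization produced by the Koszul sign $-(\nabla^{\mathbb{S}}_Z g_{\mathbb{S}}^{\mathbf{b}})(X,Y)$, leaving the two terms $-g_{\mathbb{S}}(\mathbf{S}_{\mathbf{b}}, Y)J_{\mathbb{S}}(X) - g_{\mathbb{S}}(\mathbf{S}_{\mathbf{b}}, X)J_{\mathbb{S}}(Y)$ stated in the proposition.
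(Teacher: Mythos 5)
Your plan is sound and would yield the proposition, but it is organized quite differently from the paper's proof. You compute the difference tensor $A=\nabla^{\mathbf{b}}-\nabla^{\mathbb{S}}$ in one shot from the identity $2g_{\mathbb{S}}^{\mathbf{b}}(A(X,Y),Z)=(\nabla^{\mathbb{S}}_Xg_{\mathbb{S}}^{\mathbf{b}})(Y,Z)+(\nabla^{\mathbb{S}}_Yg_{\mathbb{S}}^{\mathbf{b}})(X,Z)-(\nabla^{\mathbb{S}}_Zg_{\mathbb{S}}^{\mathbf{b}})(X,Y)$, differentiating the explicit expression for $g_{\mathbb{S}}^{\mathbf{b}}$ against $\nabla^{\mathbb{S}}$. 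The paper instead factors the passage from $g_{\mathbb{S}}$ to $g_{\mathbb{S}}^{\mathbf{b}}$ through intermediate metrics $g_1$ (a rank-two deformation by $\hat\omega_{\mathbf{b}}\cdot\hat\theta+\hat\theta\cdot\hat\omega_{\mathbf{b}}$), $g_2=W_{\mathbf{b}}g_1$ (conformal), and $g_3=g_2+\alpha\hat\theta^2$ (rank one), invoking Propositions \ref{RankTwoProp} and \ref{RankOneProp} for the first and last steps and the standard conformal-change formula for the middle one. Both routes ultimately rest on the Koszul formula; yours trades the reusable appendix lemmas for a single longer bookkeeping exercise, and your identification of which pieces of $\nabla^{\mathbb{S}}g_{\mathbb{S}}^{\mathbf{b}}$ produce which terms of the answer (the $W_{\mathbf{b}}\hat\eta$ part giving the conformal terms and $-g_{\mathbb{S}}(X,Y)J_{\mathbb{S}}(\mathbf{S}_{\mathbf{b}})$ via $J_{\mathbb{S}}(\mathbf{S}_{\mathbf{b}})=\tfrac12\nabla^{\mathcal{H}}\log W_{\mathbf{b}}$, the cross terms giving $-g_{\mathbb{S}}(\mathbf{S}_{\mathbf{b}},Y)J_{\mathbb{S}}(X)-g_{\mathbb{S}}(\mathbf{S}_{\mathbf{b}},X)J_{\mathbb{S}}(Y)$ through $(\nabla^{\mathbb{S}}_U\hat\theta)(V)=g_{\mathbb{S}}(U,J_{\mathbb{S}}(V))$, and the $\hat\theta^2$ block contributing nothing on horizontal arguments) is correct.

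The one place where your outline is thinner than it should be is the $Z=\hat{\mathbf{T}}$ step, which you call a ``consistency check.'' It is not optional: because $g_{\mathbb{S}}^{\mathbf{b}}(V,Z)=W_{\mathbf{b}}\,g_{\mathbb{S}}(V,Z)+W_{\mathbf{b}}\,\hat\theta(V)\hat\omega_{\mathbf{b}}(Z)$ for horizontal $Z$, testing against $\mathcal{H}_p$ alone determines $A(X,Y)^{\mathcal{H}}+\hat\theta(A(X,Y))\,\mathbf{S}_{\mathbf{b}}$ rather than $A(X,Y)^{\mathcal{H}}$ itself, so you must separately prove $\hat\theta(A(X,Y))=0$ to conclude. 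In your setup that computation is the most laborious part, since $(\nabla^{\mathbb{S}}_Xg_{\mathbb{S}}^{\mathbf{b}})(Y,\hat{\mathbf{T}})$ brings in $\nabla^{\mathbb{S}}\hat\omega_{\mathbf{b}}$, i.e., second derivatives of $\log W_{\mathbf{b}}$. The paper sidesteps it entirely by using that both $(J_{\mathbb{S}},\hat{\mathbf{T}},\hat\theta,g_{\mathbb{S}})$ and $(J_{\mathbb{S}}^{\mathbf{b}},\hat{\mathbf{T}}_{\mathbf{b}},\hat\theta_{\mathbf{b}},g_{\mathbb{S}}^{\mathbf{b}})$ are Sasakian (Lemma \ref{SasakianLem}), whence $\hat\theta(\nabla^{\mathbb{S}}_XY)=\hat\eta(X,J_{\mathbb{S}}(Y))=\hat\theta(\nabla^{\mathbf{b}}_XY)$ with essentially no computation; you may want to borrow that argument rather than grind out the $\hat{\mathbf{T}}$-test directly.
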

\begin{proof}
To simplify the computations we connect $g_{\mathbb{S}}=g_0$ to $g_{\mathbb{S}}^{\mathbf{b}}=g_3$ by a sequence of intermediate metrics. 
First of all, let
$$
g_1=g_0+ \omega_{\mathbf{b}} \cdot \hat{\theta}+\hat{\theta}\cdot \omega_{\mathbf{b}}= g_{\mathbb{S}}+ \omega_{\mathbf{b}} \cdot \hat{\theta}+\hat{\theta}\cdot \omega_{\mathbf{b}}
$$
which is a rank two deformation of $g_0$.  Next, let
\begin{align*}
g_2&= W_{\mathbf{b}} g_1  = W_{\mathbf{b}}g_{\mathbb{S}}+   W_{\mathbf{b}}\left(\omega_{\mathbf{b}} \cdot \hat{\theta}+  \hat{\theta}\cdot \omega_{\mathbf{b}}\right),
\end{align*}
which is a conformal deformation of $g_1$.  Finally, 
$$
g^{\mathbf{b}}_{\mathbb{S}}=g_3=  g_2+\alpha \hat{\theta}^2=g_2+\alpha W_{\mathbf{b}}^{-2}  \hat{\theta}^2_{\mathbf{b}}
$$
is a rank one deformation of $g_2$ where
$$
\alpha= W_{\mathbf{b}}-1+\frac{1}{4} |\nabla^{\mathcal{H}} \log W_{\mathbf{b}}|^2.
$$
In what follows we denote by $\nabla^j$ the Levi-Civita connections of $g_j$.

Let $\mathbf{T}_0=\hat{\mathbf{T}}$ and $\mathbf{S}_0=\mathbf{S}_{\mathbf{b}}$.  By the definition of Sasakian almost contact metric structure and Lemma \ref{SasakianLem}, 
\begin{align*}\nabla_{\mathbf{T}_0}^0 \mathbf{T}_0 =\mathbf{0}, |\mathbf{T}_0|_{g_0}&=1 \mbox{ and for $X\in \Gamma(\mathcal{H})$,} \\
\nabla_X^0 \mathbf{T}_0&=-J_{\mathbb{S}}(X).
\end{align*}
Hence, Proposition \ref{RankTwoProp} implies that, for $X,Y\in \Gamma(\mathcal{H})$,
$$
\nabla^1_X Y-\nabla^0_X Y=a^{10}(X,Y) (\mathbf{T}_0-\mathbf{S}_0)-g_0(\mathbf{S}_0,Y) J_{\mathbb{S}} (X)-g_0(\mathbf{S}_0,X) J_{\Real} (Y).
$$
Likewise, let $\mathbf{T}_3=\hat{\mathbf{T}}_{\mathbf{b}}$.  By Lemma \ref{SasakianLem},
$$
(\Psi_{\mathbf{b}})_* \mathbf{T}_0=\mathbf{T}_3,  |\mathbf{T}_3|_3^2=1, \nabla_{\mathbf{T}_3}^3 \mathbf{T}_3=-J_{\mathbb{S}}^{\mathbf{b}}(\mathbf{T}_3)=\mathbf{0}.
$$
Moreover, for $X,Y\in \Gamma(\mathcal{H})$, $X$ and $Y$ are $g_3$-orthogonal to $\mathbf{T}_3$ and satisfy
$$
\nabla_X^3\mathbf{T}_3=-J_{\mathbb{S}}^{\mathbf{b}} (X)=J_{\mathbb{S}}(X) \mbox { and }g_{3}(J_{\mathbb{S}}(X),Y)=-g_3(X, J_{\mathbb{S}}(Y)).
$$
Hence,  $g_2$ is a rank one deformation of $g_3$
that satisfies the hypotheses of Proposition \ref{RankOneProp} and so, for $X,Y\in \Gamma(\mathcal{H})$, 
\begin{align*}
\nabla_X^2 Y-\nabla_X^3 Y&=\mathbf{0}.
\end{align*}

Next, we compute that
\begin{align*}
\nabla_{g_1} f &= \frac{ \mathbf{T}_0 \cdot f - \mathbf{S}_0 \cdot f }{1-|\mathbf{S}_0|^2_0} \mathbf{T}_0 +\frac{-  \mathbf{T}_0 \cdot f + |\mathbf{S}_0|^{-2}_0\mathbf{S}_0 \cdot f}{1-|\mathbf{S}_0|^2_0} \mathbf{S_0}-\frac{1}{|\mathbf{S}_0|^2_0} (\mathbf{S}_0 \cdot f)\mathbf{S}_0+\nabla^{\mathcal{H}} f \\ 
&= \frac{\mathbf{T}_0 \cdot f -\mathbf{S}_0\cdot f}{1-|\mathbf{S}_0|^2_0}(\mathbf{T}_0-\mathbf{S}_0)+\nabla^{\mathcal{H}} f .
\end{align*}
Hence,
\begin{align*}
\nabla_{g_1} \log W_{\mathbf{b}}&=\frac{(\mathbf{T}_0-\mathbf{S}_0) \cdot \log W_{\mathbf{b}}}{1-|\mathbf{S}_0|^2_0}(\mathbf{T}_0-\mathbf{S}_0)+\nabla^{\mathcal{H}} \log W_{\mathbf{b}}\\
&=\frac{(\mathbf{T}_0 -\mathbf{S}_0)\cdot \log W_{\mathbf{b}}}{1-|\mathbf{S}_0|^2_0}(\mathbf{T}_0-\mathbf{S}_0)+2J_{\mathbb{S}}(\mathbf{S}_0).
\end{align*}
The formula for the change of connection under a conformal transformation gives
\begin{align*}
	\nabla_{X}^2 Y- \nabla_{X}^1 Y &= \frac{1}{2} \left((X \cdot \log W_{\mathbf{b}}) Y +(Y\cdot \log W_{\mathbf{b}} )X-g_1(X,Y) \nabla_{g_1}\log W_{\mathbf{b}}\right).
\end{align*}
Putting everything together, for $X,Y\in \Gamma(\mathcal{H})$, one obtains
\begin{align*}
\nabla_{X}^3 Y-\nabla_X^0 Y & =a^{30}(X,Y)(\mathbf{T}_0-\mathbf{S}_{\mathbf{b}}) +\frac{1}{2}\big( (X \cdot \log W_{\mathbf{b}}) Y +(Y\cdot \log W_{\mathbf{b}} )X\big)\\
&- g_0(X,Y) J_{\mathbb{S}}(\mathbf{S}_{\mathbf{b}}) -g_0(\mathbf{S}_{\mathbf{b}},Y) J_{\mathbb{S}} (X)- g_0(\mathbf{S}_{\mathbf{b}},X) J_{\mathbb{S}}(Y),
\end{align*}
where we used $\mathbf{S}_0=\mathbf{S}_{\mathbf{b}}$ and have
$$
a^{30}(X,Y)= a^{10}(X,Y)+\frac{(\mathbf{T}_0 -\mathbf{S}_0)\cdot \log W_{\mathbf{b}}}{1-|\mathbf{S}_0|^2_0}g_0(X,Y)=\hat{\theta}( \nabla_X^3 Y-\nabla_X^0 Y).
$$
In fact, $a^{30}(X,Y)=0$.  This could be shown by direct computation but instead, we use the fact that $g_0$ and $g_3$ are Sasakian. Indeed,  for $X,Y\in \Gamma(\mathcal{H})$, 
\begin{align*}
\hat{\theta}(\nabla_{X}^0 Y) &= g_0(\nabla_{X}^0 Y, \mathbf{T}_0)=-g_0( Y, \nabla_{X}^0 \mathbf{T}_0)= g_0(X,J_{\mathbb{S}}(Y))= \hat{\eta}(X,J_{\mathbb{S}}(Y);
\end{align*}
\begin{align*}
	\hat{\theta}(\nabla_{X}^3 Y)&=W_{\mathbf{b}}^{-1}\hat{\theta}_{\mathbf{b}}(\nabla_{X}^3 Y)=W_{\mathbf{b}}^{-1}g_3(\nabla^3_X Y, \mathbf{T}_3)=-W_{\mathbf{b}}^{-1}g_3(Y, \nabla^3_X  \mathbf{T}_3)\\
	& = W_{\mathbf{b}}^{-1}g_3(Y, J_{\mathbb{S}}^{\mathbf{b}}(X))= \hat{\eta}(X,J_{\mathbb{S}}(Y)).
\end{align*}
The conclusion is immediate.
\end{proof}

\section{Horizontal and Legendrian submanifolds of $\mathbb{S}^{2n+1}$}
\subsection{Properties of horizontal submanifolds}
Suppose $\Sigma\subset \mathbb{S}^{2n+1}$ is a horizontal submanifold of dimension $m$, i.e., $T_p\Sigma\subset \mathcal{H}_p$ for all $p\in \Sigma$. Clearly, $\Sigma$ is horizontal if and only if $J_{\mathbb{S}}(T_p\Sigma)$ is $g_{\mathbb{S}}$-orthogonal to $T_p\Sigma$ at all $p\in \Sigma$.  In particular,  $m\leq n$.   There are natural $g_{\mathbb{S}}$-orthogonal splittings 
$$
\mathcal{H}_p=T_p \Sigma\oplus J_{\mathbb{S}}(T_p \Sigma)\oplus \hat{N}_p\Sigma \mbox{ and } N_p\Sigma= \Real \hat{\mathbf{T}}(p)\oplus J_{\mathbb{S}}(T_p \Sigma)\oplus \hat{N}_p\Sigma 
$$
of  the horizontal bundle and the normal bundle of $\Sigma$ in $\mathbb{S}^{2n+1}$.
 Observe, $J_{\mathbb{S}}(\hat{N}_p\Sigma)=\hat{N}_p\Sigma$.  When $m=n$, i.e.,  $\Sigma$ is Legendrian, $\hat{N}_p \Sigma=0$. 

For $p\in \Sigma$ and $V\in T_p \mathbb{S}^{2n+1}$, denote by $V^\top$  the orthogonal projection onto $T_p \Sigma$, by  $V^\perp$ the orthogonal projection onto $N_p\Sigma$, by $V^N$ the orthogonal projection onto $J_{\mathbb{S}}(T_p \Sigma)$, and by $V^{\hat{N}}$ the projection onto $\hat{N}_p\Sigma$. That is, when $V\in \mathcal{H}_p$,
$$
V= V^\top + V^N +V^{\hat{N}} \mbox{ and } V^\perp=V^N+V^{\hat{N}}.
$$ 
 In general, for  $Z\in\mathcal{H}_p$,
$$
J_{\mathbb{S}}(Z^\top)=(J_{\mathbb{S}}(Z))^N.
$$
When $\Sigma$ is Legendrian $J_{\mathbb{S}}(Z^\top)=(J_{\mathbb{S}}(Z))^\perp$.

Denote by $g_{\Sigma}$ the  metric induced on $\Sigma$ by $g_{\mathbb{S}}$ and let
$$\mathbf{A}_{\Sigma}^{\mathbb{S}}(X,Y)= (\nabla^{\mathbb{S}}_X Y)^\perp \mbox{ and } \mathbf{H}_\Sigma^{\mathbb{S}}=\mathrm{tr}_{g_\Sigma} \mathbf{A}_{\Sigma}^{\mathbb{S}} $$ 
be the second fundamental form  and mean curvature of $\Sigma$ with respect to $g_{\mathbb{S}}$ which are valued in $N\Sigma$.  We record some standard facts about these horizontal submanifolds -- we omit the proofs as they are standard.
\begin{prop}\label{CurvatureFactsProp}
	For $X,Y,Z\in T_p\Sigma$ let 
	\begin{align*}\sigma_{\Sigma}(X,Y,Z) &=g_{\mathbb{S}}(\mathbf{A}^{\mathbb{S}}_{\Sigma}(X,Y), J_{\mathbb{S}}(Z))=g_{\mathbb{S}}(\mathbf{A}_{\Sigma}^N(X,Y), J_{\mathbb{S}}(Z)) \\
	&\mbox{ and } \beta_\Sigma(X)= \mathrm{tr}_{{\Sigma}} \sigma_{\Sigma} (\cdot, \cdot, X).
\end{align*}
The following hold:
	\begin{enumerate}
		\item $g_{\mathbb{S}}( \mathbf{A}_{\Sigma}^{\mathbb{S}}(X,Y), \hat{\mathbf{T}})=0$ and  $g_{\mathbb{S}}( \mathbf{H}_\Sigma, \hat{\mathbf{T}})=0$;
		\item $\sigma_{\Sigma}$ is a totally symmetric $(0,3)$ tensor field;
		\item $\beta_\Sigma$ is a closed one-form that satisfies: $$\beta_\Sigma(X)=g_{\mathbb{S}}(\mathbf{H}_\Sigma^{\mathbb{S}}, J_{\mathbb{S}}(X))=g_{\mathbb{S}}(\mathbf{H}_\Sigma^N, J_{\mathbb{S}}(X))=-g_{\Sigma}(J_{\mathbb{S}}(\mathbf{H}_\Sigma^N), X).$$
	\end{enumerate} 
\end{prop}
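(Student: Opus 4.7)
The plan is to extract everything from two ingredients: the Sasakian identities $\nabla^{\mathbb{S}}_X \hat{\mathbf{T}}=-J_{\mathbb{S}}(X)$ and $(\nabla^{\mathbb{S}}_X J_{\mathbb{S}})(Z) = g_{\mathbb{S}}(X,Z)\hat{\mathbf{T}} - \hat{\theta}(Z) X$, together with the algebraic consequences of horizontality, namely that $J_{\mathbb{S}}(T_p\Sigma)$ is $g_{\mathbb{S}}$-orthogonal to $T_p\Sigma$ and $\hat{\theta}$ vanishes on $T_p\Sigma$.

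\emph{Step 1 (Part (1)).} For $X,Y \in T_p \Sigma$, extend $Y$ locally as a tangent field and differentiate $g_{\mathbb{S}}(Y,\hat{\mathbf{T}}) \equiv 0$ in the direction $X$. This yields $g_{\mathbb{S}}(\nabla^{\mathbb{S}}_X Y, \hat{\mathbf{T}}) = -g_{\mathbb{S}}(Y, \nabla^{\mathbb{S}}_X \hat{\mathbf{T}}) = g_{\mathbb{S}}(Y, J_{\mathbb{S}}(X))$, which vanishes because $J_{\mathbb{S}}(X) \in J_{\mathbb{S}}(T_p\Sigma) \perp T_p \Sigma$. Hence $\mathbf{A}^{\mathbb{S}}_\Sigma$ has no $\hat{\mathbf{T}}$-component, and the same follows for $\mathbf{H}_\Sigma^{\mathbb{S}}$ by tracing.

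\emph{Step 2 (Part (2)).} The second equality $g_{\mathbb{S}}(\mathbf{A}_\Sigma^{\mathbb{S}}(X,Y), J_{\mathbb{S}}(Z)) = g_{\mathbb{S}}(\mathbf{A}_\Sigma^N(X,Y), J_{\mathbb{S}}(Z))$ is immediate from the splitting $N_p\Sigma = \Real\hat{\mathbf{T}} \oplus J_{\mathbb{S}}(T_p\Sigma)\oplus \hat{N}_p\Sigma$ combined with $J_{\mathbb{S}}(\hat{N}_p\Sigma) = \hat{N}_p\Sigma \perp J_{\mathbb{S}}(T_p\Sigma)$ and Step 1. Symmetry in $X,Y$ is built into $\mathbf{A}_\Sigma^{\mathbb{S}}$. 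For the $Y \leftrightarrow Z$ symmetry, first verify that $\nabla^{\mathbb{S}}_X Z$ is horizontal by differentiating $g_{\mathbb{S}}(Z, \hat{\mathbf{T}})\equiv 0$ as in Step 1 (using horizontality of $Z$). Then differentiate $g_{\mathbb{S}}(Y, J_{\mathbb{S}}(Z)) \equiv 0$ along $X$; using $\nabla^{\mathbb{S}}_X(J_{\mathbb{S}} Z) = J_{\mathbb{S}}(\nabla^{\mathbb{S}}_X Z) + g_{\mathbb{S}}(X,Z)\hat{\mathbf{T}}$ (since $\hat{\theta}(Z)=0$) and $g_{\mathbb{S}}(Y,\hat{\mathbf{T}}) = 0$, one obtains $\sigma_\Sigma(X,Y,Z) = -g_{\mathbb{S}}(Y, J_{\mathbb{S}}(\nabla^{\mathbb{S}}_X Z))$. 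Skew-adjointness of $J_{\mathbb{S}}$ on $\mathcal{H}$ (valid because $Y$ and $\nabla^{\mathbb{S}}_X Z$ are horizontal) converts this to $g_{\mathbb{S}}(\nabla^{\mathbb{S}}_X Z, J_{\mathbb{S}}(Y)) = \sigma_\Sigma(X,Z,Y)$.

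\emph{Step 3 (Part (3)).} The identity $\beta_\Sigma(X) = g_{\mathbb{S}}(\mathbf{H}_\Sigma^{\mathbb{S}}, J_{\mathbb{S}}(X))$ is the trace of the defining equality for $\sigma_\Sigma$; the refinement to $\mathbf{H}_\Sigma^N$ again uses $J_{\mathbb{S}}(X) \perp \hat{N}_p\Sigma$ and Step 1, while the final expression uses skew-adjointness of $J_{\mathbb{S}}$ on $\mathcal{H}$. For closedness, work in a local orthonormal frame $\{e_i\}$ on $\Sigma$ which is parallel at a point $p$. Total symmetry of $\sigma_\Sigma$ from Step 2 lets one rewrite
\[
(\nabla^\Sigma_X \beta_\Sigma)(W) = \sum_i (\nabla^\Sigma_X \sigma_\Sigma)(W, e_i, e_i),
\]
so the antisymmetrization $d\beta_\Sigma(X,W)$ becomes a trace of the Codazzi-type difference $(\nabla^\Sigma_X\sigma_\Sigma)(W,\cdot,\cdot) - (\nabla^\Sigma_W\sigma_\Sigma)(X,\cdot,\cdot)$. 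Unwinding the definitions, this difference equals $g_{\mathbb{S}}\!\bigl((\nabla^\perp_X \mathbf{A}^{\mathbb{S}}_\Sigma)(W, e_i) - (\nabla^\perp_W \mathbf{A}^{\mathbb{S}}_\Sigma)(X, e_i),\, J_{\mathbb{S}}(e_i)\bigr)$ plus ambient correction terms coming from $\nabla^{\mathbb{S}}_\cdot(J_{\mathbb{S}}\cdot)$. The Codazzi equation on $\mathbb{S}^{2n+1}$ gives
\[
(\nabla^\perp_X \mathbf{A}^{\mathbb{S}}_\Sigma)(W, e_i) - (\nabla^\perp_W \mathbf{A}^{\mathbb{S}}_\Sigma)(X, e_i) = \bigl(R^{\mathbb{S}}(X,W)e_i\bigr)^\perp = 0
\]
because $R^{\mathbb{S}}(X,W)e_i = g_{\mathbb{S}}(W,e_i)X - g_{\mathbb{S}}(X,e_i)W \in T_p\Sigma$. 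The correction terms coming from the Sasakian identity for $\nabla^{\mathbb{S}} J_{\mathbb{S}}$ produce only factors of $\hat{\mathbf{T}}$ paired with horizontal vectors, hence also vanish. This gives $d\beta_\Sigma = 0$.

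The main obstacle is the closedness of $\beta_\Sigma$: one must carefully bookkeep the tangential/normal/Reeb components introduced when interchanging $\nabla^{\mathbb{S}}$ with $J_{\mathbb{S}}$, and verify that all the Sasakian correction terms cancel after antisymmetrizing and tracing. The other assertions are direct consequences of the Sasakian structure equations plus horizontality.
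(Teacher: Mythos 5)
Your Steps 1 and 2 are correct, and they are surely the intended argument (the paper omits the proof as ``standard''): differentiating $\hat{\theta}(Y)=0$ and $g_{\mathbb{S}}(Y,J_{\mathbb{S}}(Z))=0$ along $\Sigma$, and using $\nabla^{\mathbb{S}}\hat{\mathbf{T}}=-J_{\mathbb{S}}$ together with the Sasakian formula for $\nabla^{\mathbb{S}}J_{\mathbb{S}}$, gives part (1) and the total symmetry of $\sigma_\Sigma$; the displayed identities for $\beta_\Sigma$ in part (3) follow as you say.

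The gap is in your closedness argument, at the sentence asserting that the correction terms coming from $\nabla^{\mathbb{S}}_{\cdot}(J_{\mathbb{S}}\cdot)$ ``produce only factors of $\hat{\mathbf{T}}$ paired with horizontal vectors.''  They do not.  Writing $\nabla^{\mathbb{S}}_V(J_{\mathbb{S}}Z)=g_{\mathbb{S}}(V,Z)\hat{\mathbf{T}}+J_{\mathbb{S}}(\nabla^{\mathbb{S}}_V Z)$ and $\nabla^{\mathbb{S}}_VZ=\nabla^\Sigma_VZ+\mathbf{A}_\Sigma(V,Z)$, the second summand contributes $g_{\mathbb{S}}(\mathbf{A}_\Sigma(A,B),J_{\mathbb{S}}(\mathbf{A}_\Sigma(V,C)))$ to $(\nabla^{\Sigma}_V\sigma_\Sigma)(A,B,C)$, and since $J_{\mathbb{S}}$ sends $J_{\mathbb{S}}(T\Sigma)$ back into $T\Sigma$ but preserves $\hat{N}\Sigma$, the surviving piece is $g_{\mathbb{S}}(\mathbf{A}^{\hat N}_\Sigma(A,B),J_{\mathbb{S}}(\mathbf{A}^{\hat N}_\Sigma(V,C)))$.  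Carrying this through your computation, the Codazzi term vanishes exactly as you say, but the quadratic corrections are \emph{antisymmetric} rather than symmetric under $X\leftrightarrow W$ (by skew-adjointness of $J_{\mathbb{S}}$ on $\hat{N}\Sigma$), and one is left with
\begin{equation*}
d\beta_\Sigma(X,W)=-2\sum_{i=1}^m g_{\mathbb{S}}\bigl(\mathbf{A}^{\hat N}_\Sigma(X,e_i),\,J_{\mathbb{S}}(\mathbf{A}^{\hat N}_\Sigma(W,e_i))\bigr).
\end{equation*}
When $\Sigma$ is Legendrian, $\hat{N}\Sigma=0$ and your argument closes; this is also the only setting in which the paper actually invokes closedness (in Theorem \ref{CRinv} it is used only after reducing to $\mathbf{A}^{\hat N}_\Sigma\equiv 0$).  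But for a horizontal, non-Legendrian $\Sigma$ the right-hand side need not vanish: the only pointwise constraint that horizontality places on $\mathbf{A}^{\hat N}_\Sigma$ is none at all (total symmetry constrains only $\mathbf{A}^N_\Sigma$), and for instance the horizontal surface in $\mathbb{S}^7$ obtained by normalizing $\bigl(1-\tfrac{i}{6}x^3y,\;x,\;y+\tfrac{i}{3}x^3,\;\tfrac12 x^2+ixy\bigr)$ has, at the origin, $\mathbf{A}^{\hat N}(e_1,e_1)=\mathbf{e}_4$, $\mathbf{A}^{\hat N}(e_1,e_2)=\pm J_{\mathbb{S}}(\mathbf{e}_4)$, $\mathbf{A}^{\hat N}(e_2,e_2)=0$, so that $d\beta_\Sigma(e_1,e_2)=\mp 2\neq 0$ there (a direct computation of $\beta_\Sigma=y\,dx-x\,dy+O(3)$ confirms this).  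So you should either restrict the closedness claim to the case $\mathbf{A}^{\hat N}_\Sigma=0$ (in particular to Legendrian $\Sigma$), or supply an argument for why the quadratic term vanishes; I do not believe one exists in general, which suggests the statement itself needs this caveat.
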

\subsection{Transformation of extrinsic curvatures for horizontal submanifolds}
Let $\Sigma \subset \mathbb{S}^{2n+1}$ be a horizontal submanifold. Denote by 
$\mathbf{A}_{\Sigma}^{\mathbf{b}} \mbox{ and }\mathbf{H}_{\Sigma}^{\mathbf{b}}$
the second fundamental form and mean curvature of $\Sigma$ relative to $g^{\mathbf{b}}={\Psi}_{\mathbb{b}}^*g_{\mathbb{S}}$.  Let $g^{\mathbf{b}}_{\Sigma}$ be the pullback of the metric $g^{\mathbf{b}}_{\mathbb{S}}$ to $\Sigma$.   By \eqref{ConformalMetEqn} and the fact that $\Sigma$ is horizontal one has
$$
g_{\Sigma}^{\mathbf{b}}=W_{\mathbf{b}} g_{\Sigma}
$$
and the splitting of $\mathcal{H}_p$ is also preserved. 
\begin{prop}\label{CurvatureChangeProp}
	Let $\Sigma\subset \mathbb{S}^{2n+1}$ be an $m$-dimensional horizontal submanifold.  The second fundamental forms and mean curvatures with respect to $g_{\mathbb{S}}$ and $g_{\mathbb{S}}^{\mathbf{b}}$ satisfy
	\begin{align*}
		\mathbf{A}_{\Sigma}^{\mathbf{b}}(X,Y)&=\mathbf{A}_{\Sigma}(X,Y)-g_\Sigma(\mathbf{S}_{\mathbf{b}}^\top,Y) J_{\mathbb{S}} (X)- g_\Sigma(\mathbf{S}_{\mathbf{b}}^\top,X) J_{\mathbb{S}}(Y)\\
		&- g_\Sigma(X,Y)(J_{\mathbb{S}}(\mathbf{S}_{\mathbf{b}}))^\perp, \\
	\mathbf{H}_{\Sigma}^{\mathbf{b}}&=\frac{1}{W_{\mathbf{b}}} \mathbf{H}_{\Sigma}-\frac{2}{W_{\mathbf{b}}}J_{\mathbb{S}}(\mathbf{S}_{\mathbf{b}}^\top)- \frac{m}{W_{\mathbf{b}}}(J_{\mathbb{S}}(\mathbf{S}_{\mathbf{b}}))^\perp.
\end{align*}
		When $\Sigma$ is Legendrian this simplifies to
	$$
	\mathbf{H}_{\Sigma}^{\mathbf{b}}=\frac{1}{W_{\mathbf{b}}} \mathbf{H}_{\Sigma} - \frac{m+2}{W_{\mathbf{b}}}(J_{\mathbb{S}}(\mathbf{S}_{\mathbf{b}}))^\perp=\frac{1}{W_{\mathbf{b}}} \mathbf{H}_{\Sigma} - \frac{m+2}{W_{\mathbf{b}}}J_{\mathbb{S}}(\mathbf{S}_{\mathbf{b}}^\top).
	$$
\end{prop}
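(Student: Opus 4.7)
The plan is to derive both formulas from the intrinsic characterization of the second fundamental form as a difference of connections, $\mathbf{A}^{\mathbf{b}}_\Sigma(X,Y) = \nabla^{\mathbf{b}}_X Y - \nabla^{\mathbf{b},\Sigma}_X Y$, where $\nabla^{\mathbf{b},\Sigma}$ is the Levi-Civita connection of $g^{\mathbf{b}}_\Sigma$ on $\Sigma$. This circumvents the need to track how the ambient normal bundle changes under the passage from $g_{\mathbb{S}}$ to $g^{\mathbf{b}}_{\mathbb{S}}$, which is convenient because the Reeb direction rotates (Lemma \ref{SasakianLem}) even though the horizontal distribution is preserved.

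Given $X, Y \in T_p\Sigma \subset \mathcal{H}_p$, I extend $Y$ to a local vector field along $\Sigma$ tangent to $\Sigma$ and then further to a horizontal section of $\mathcal{H}$ near $p$ in $\mathbb{S}^{2n+1}$. Proposition \ref{ConnectionProp} then gives a closed form for $\nabla^{\mathbf{b}}_X Y - \nabla^{\mathbb{S}}_X Y$. Because $X, Y$ are tangent to $\Sigma$, the inner products $g_{\mathbb{S}}(\mathbf{S}_{\mathbf{b}}, X)$ and $g_{\mathbb{S}}(\mathbf{S}_{\mathbf{b}}, Y)$ collapse to $g_\Sigma(\mathbf{S}_{\mathbf{b}}^\top, X)$ and $g_\Sigma(\mathbf{S}_{\mathbf{b}}^\top, Y)$, respectively.

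For the intrinsic side, the conformal relation $g^{\mathbf{b}}_\Sigma = W_{\mathbf{b}}\, g_\Sigma$ from \eqref{ConformalMetEqn} together with the standard formula for the conformal change of Levi-Civita connections yields $\nabla^{\mathbf{b},\Sigma}_X Y - \nabla^\Sigma_X Y$. The tangential gradient $\nabla^\Sigma \log W_{\mathbf{b}}$ is computed from the identity $\nabla^{\mathcal{H}} \log W_{\mathbf{b}} = 2 J_{\mathbb{S}}(\mathbf{S}_{\mathbf{b}})$, which follows by applying $J_{\mathbb{S}}$ to the definition of $\mathbf{S}_{\mathbf{b}}$, and gives $\nabla^\Sigma \log W_{\mathbf{b}} = 2 J_{\mathbb{S}}(\mathbf{S}_{\mathbf{b}}^N)$, where $\mathbf{S}_{\mathbf{b}}^N$ denotes the projection onto $J_{\mathbb{S}}(T_p\Sigma)$. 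Subtracting the two connection changes, the symmetric terms $\tfrac{1}{2}((X\cdot\log W_{\mathbf{b}})Y + (Y\cdot\log W_{\mathbf{b}})X)$ cancel, and the key identity $-J_{\mathbb{S}}(\mathbf{S}_{\mathbf{b}}) + J_{\mathbb{S}}(\mathbf{S}_{\mathbf{b}}^N) = -(J_{\mathbb{S}}(\mathbf{S}_{\mathbf{b}}))^\perp$, obtained from the decomposition $J_{\mathbb{S}}(\mathbf{S}_{\mathbf{b}}) = J_{\mathbb{S}}(\mathbf{S}_{\mathbf{b}}^\top) + J_{\mathbb{S}}(\mathbf{S}_{\mathbf{b}}^N) + J_{\mathbb{S}}(\mathbf{S}_{\mathbf{b}}^{\hat{N}})$, combines the two remaining $g_\Sigma(X,Y)$-terms into the single expression claimed for $\mathbf{A}^{\mathbf{b}}_\Sigma$.

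The mean curvature formula then follows by tracing with $g^{\mathbf{b}}_\Sigma = W_{\mathbf{b}}\, g_\Sigma$, so $\mathbf{H}^{\mathbf{b}}_\Sigma = W_{\mathbf{b}}^{-1}\,\mathrm{tr}_{g_\Sigma}\mathbf{A}^{\mathbf{b}}_\Sigma$, together with the identity $\sum_i g_\Sigma(\mathbf{S}_{\mathbf{b}}^\top, e_i) J_{\mathbb{S}}(e_i) = J_{\mathbb{S}}(\mathbf{S}_{\mathbf{b}}^\top)$, which accounts for the factors $2$ and $m$ in the stated formula. In the Legendrian case $m = n$ the decomposition degenerates since $\hat{N}_p\Sigma = 0$, forcing $\mathbf{S}_{\mathbf{b}}^{\hat{N}} = 0$ and $J_{\mathbb{S}}(\mathbf{S}_{\mathbf{b}})^\perp = J_{\mathbb{S}}(\mathbf{S}_{\mathbf{b}}^\top)$, so the coefficients $2$ and $m$ combine to $m + 2$. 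The main (mild) subtlety throughout is keeping careful track of tangential versus normal components of $J_{\mathbb{S}}(\mathbf{S}_{\mathbf{b}})$ when merging the ambient and intrinsic connection changes, so that only the $(J_{\mathbb{S}}(\mathbf{S}_{\mathbf{b}}))^\perp$ piece survives in $\mathbf{A}^{\mathbf{b}}_\Sigma$.
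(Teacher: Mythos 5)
Your proposal is correct and follows essentially the same route as the paper: the paper's proof likewise combines the conformal-change formula for the intrinsic connection (from $g^{\mathbf{b}}_\Sigma = W_{\mathbf{b}} g_\Sigma$) with Proposition \ref{ConnectionProp} for the ambient connection, and then traces. Your write-up simply makes explicit the bookkeeping the paper leaves implicit, namely $\nabla_\Sigma \log W_{\mathbf{b}} = 2(J_{\mathbb{S}}(\mathbf{S}_{\mathbf{b}}))^\top = 2J_{\mathbb{S}}(\mathbf{S}_{\mathbf{b}}^N)$ and the resulting collapse of the two $g_\Sigma(X,Y)$-terms to $-(J_{\mathbb{S}}(\mathbf{S}_{\mathbf{b}}))^\perp$, all of which is accurate.
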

\begin{proof}
As $g_\Sigma^{\mathbf{b}}$ and $g_\Sigma^{\mathbb{b}}$ are conformal, standard computations give
$$
\nabla^{\mathbf{b}, \Sigma}_{X} Y=\nabla^\Sigma_{X} Y+\frac{1}{2} \left( X \cdot \log W_{\mathbf{b}} Y+ Y \cdot \log W_{\mathbf{b}} X-g_\Sigma(X,Y)  \nabla_{{\Sigma}} \log W_{\mathbf{b}}\right).
$$
Combining this with Proposition \ref{ConnectionProp} yields the formula for $\mathbf{A}^{\mathbf{b}}_\Sigma$.
Taking the trace and using the conformal relation between the metrics completes the proof.
\end{proof}

Define a symmetric tensor field $\mathbf{E}_\Sigma^N$ on $\Sigma$ by
$$
\mathbf{E}_\Sigma^N(X,Y)= g_{\Sigma}(X,Y) \mathbf{H}^N_{\Sigma}+\beta_\Sigma(X) J_{\Real}(Y)+\beta_\Sigma(Y) J_{\Real}(X).
$$
This tensor field has the properties that
$$
(X,Y,Z)\mapsto g_{\mathbb{S}}(\mathbf{E}_\Sigma^N(X,Y),J_{\mathbb{S}}(Z)) \mbox{ is totally symmetric and}
$$
$$
\mathrm{tr}_\Sigma \mathbf{E}_\Sigma^N=(m+2)\mathbf{H}_\Sigma^N.
$$
Using $\mathbf{E}_\Sigma^N$ we introduce a trace-free symmetric tensor field which helps measure ``CR-flatness" of $\Sigma$ in the direction of $J_{\mathbb{S}}(T\Sigma)$:
$$
{\mathbf{U}}_{\Sigma}^N(X,Y)=\mathbf{A}^N_\Sigma(X,Y)- \frac{1}{m+2}\mathbf{E}_\Sigma^N(X,Y).
$$
In the direction, $\hat{N}\Sigma$, the usual trace-free part of the second fundamental form is the relevant measure of ``CR-flatness".  That is, 
$$
\mathring{\mathbf{A}}^{\hat{N}}_\Sigma(X,Y)= \mathbf{A}_{\Sigma}^{\hat{N}}(X,Y) -\frac{1}{m} \mathbf{H}_\Sigma^{\hat{N}} g_\Sigma(X,Y)
$$
where $\mathbf{A}^{\hat{N}}_\Sigma(X,Y)$ and $\mathbf{H}^{\hat{N}}_\Sigma$ are the projection of $\mathbf{A}_\Sigma(X,Y)$ and $\mathbf{H}_\Sigma$ to $\hat{N}\Sigma$.
In particular, by construction, ${\mathbf{U}}_{\Sigma}^N$ and $\mathring{\mathbf{A}}^{\hat{N}}_\Sigma$ are orthogonal.  When ${\mathbf{U}}_\Sigma$ and $\mathring{\mathbf{A}}^{\hat{N}}_\Sigma$ both identically vanish we say that $\Sigma$ is \emph{CR-umbilical}. 
\begin{prop}\label{UInvProp}
	Let $\Sigma\subset \mathbb{S}^{2n+1}$ be a horizontal submanifold.  For $\Psi\in Aut_{CR}(\mathbb{S}^{2n+1})$ and $X,Y\in T_p \Sigma$, 
when	$\Sigma'=\Psi(\Sigma)$, one has
	$$ 
	\Psi_* (\mathbf{U}_\Sigma^N(X,Y))= \mathbf{U}_{\Sigma'}^N(\Psi_* X, \Psi_* Y) \mbox{ and } \Psi_* (\mathbf{A}_\Sigma^{\hat{N}}(X,Y))=\mathbf{A}_{\Sigma'}^{\hat{N}}(\Psi_* X, \Psi_* Y) .
	$$
\end{prop}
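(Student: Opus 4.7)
By the factorization $\Psi=\Psi_A\circ\Psi_{\mathbf{b}}$ of \eqref{StructureAutCEqn}, it suffices to treat the two types of factors separately.  Each unitary map $\Psi_A$ is a Riemannian isometry of $(\mathbb{S}^{2n+1},g_{\mathbb{S}})$ that intertwines $J_{\mathbb{S}}$ and fixes $\hat{\mathbf{T}}$, hence preserves the splitting $\mathcal{H}_p=T_p\Sigma\oplus J_{\mathbb{S}}(T_p\Sigma)\oplus\hat{N}_p\Sigma$ together with $\nabla^{\mathbb{S}}$ and $\mathbf{A}_\Sigma$, so both $\mathbf{U}^N$ and $\mathring{\mathbf{A}}^{\hat N}$ are automatically $\Psi_A$-equivariant.  (I read the symbol ``$\mathbf{A}_\Sigma^{\hat N}$'' in the statement as $\mathring{\mathbf{A}}_\Sigma^{\hat N}$, consistent with the surrounding discussion of CR-umbilicity; as we shall see, the bare $\mathbf{A}^{\hat N}$ is not invariant.)  The plan is then to convert $\Psi_{\mathbf{b}}$-equivariance into a statement about the contact gauge transformation of Section \ref{ContactGaugeConnectionSec}.

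The key observation is that $g_{\mathbb{S}}^{\mathbf{b}}=\Psi_{\mathbf{b}}^*g_{\mathbb{S}}$, so the pullback under $\Psi_{\mathbf{b}}$ of the second fundamental form and mean curvature of $\Sigma'=\Psi_{\mathbf{b}}(\Sigma)$ agrees with the corresponding objects for $\Sigma$ computed in the gauged metric $g_{\mathbb{S}}^{\mathbf{b}}$.  Since $g_{\mathbb{S}}^{\mathbf{b}}|_{\mathcal{H}}=W_{\mathbf{b}}\,g_{\mathbb{S}}|_{\mathcal{H}}$ and $J_{\mathbb{S}}^{\mathbf{b}}|_{\mathcal{H}}=J_{\mathbb{S}}|_{\mathcal{H}}$, the three-fold splitting of $\mathcal{H}$ is simultaneously $g_{\mathbb{S}}$- and $g_{\mathbb{S}}^{\mathbf{b}}$-orthogonal, and the projections to the $N$ and $\hat N$ summands commute with the gauge change.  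Proving the proposition therefore reduces to showing that $\mathbf{U}_\Sigma^N$ and $\mathring{\mathbf{A}}_\Sigma^{\hat N}$ are left unchanged when $g_{\mathbb{S}}$ is replaced by $g_{\mathbb{S}}^{\mathbf{b}}$.

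The remaining work is a direct computation from Proposition \ref{CurvatureChangeProp}.  Decomposing $\mathbf{S}_{\mathbf{b}}=\mathbf{S}_{\mathbf{b}}^\top+\mathbf{S}_{\mathbf{b}}^N+\mathbf{S}_{\mathbf{b}}^{\hat N}$ and using that $J_{\mathbb{S}}$ swaps $T\Sigma\leftrightarrow J_{\mathbb{S}}(T\Sigma)$ while preserving $\hat N\Sigma$, I project the transformation formula onto each summand.  The $\hat N$ projection gives $\mathbf{A}_\Sigma^{\mathbf{b},\hat N}(X,Y)=\mathbf{A}_\Sigma^{\hat N}(X,Y)-g_\Sigma(X,Y)J_{\mathbb{S}}(\mathbf{S}_{\mathbf{b}}^{\hat N})$; tracing with $g_\Sigma^{\mathbf{b}}=W_{\mathbf{b}} g_\Sigma$, the correction inside $\tfrac{1}{m}g_\Sigma^{\mathbf{b}}\mathbf{H}_\Sigma^{\mathbf{b},\hat N}$ cancels the extra $-g_\Sigma(X,Y)J_{\mathbb{S}}(\mathbf{S}_{\mathbf{b}}^{\hat N})$ exactly, yielding $\mathring{\mathbf{A}}_\Sigma^{\mathbf{b},\hat N}=\mathring{\mathbf{A}}_\Sigma^{\hat N}$.

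For $\mathbf{U}^N$ the crucial auxiliary computation is the transformation law for the one-form $\beta_\Sigma$.  Using $\beta_\Sigma(X)=g_{\mathbb{S}}(\mathbf{H}_\Sigma^N,J_{\mathbb{S}}(X))$, the mean-curvature formula in Proposition \ref{CurvatureChangeProp}, and $J_{\mathbb{S}}^2=-\mathrm{id}$ on $\mathcal{H}$, I will derive
$$\beta_\Sigma^{\mathbf{b}}(X)=\beta_\Sigma(X)-(m+2)\,g_\Sigma(\mathbf{S}_{\mathbf{b}}^\top,X).$$
Substituting into $\mathbf{E}_\Sigma^{\mathbf{b},N}=g_\Sigma^{\mathbf{b}}(X,Y)\mathbf{H}_\Sigma^{\mathbf{b},N}+\beta_\Sigma^{\mathbf{b}}(X)J_{\mathbb{S}}(Y)+\beta_\Sigma^{\mathbf{b}}(Y)J_{\mathbb{S}}(X)$, the corrections appearing in $\tfrac{1}{m+2}\mathbf{E}_\Sigma^{\mathbf{b},N}$ match exactly the corrections appearing in the $N$-projection of $\mathbf{A}_\Sigma^{\mathbf{b}}$ from Proposition \ref{CurvatureChangeProp}, so they cancel in $\mathbf{U}_\Sigma^{\mathbf{b},N}=\mathbf{A}_\Sigma^{\mathbf{b},N}-\tfrac{1}{m+2}\mathbf{E}_\Sigma^{\mathbf{b},N}$, giving $\mathbf{U}_\Sigma^{\mathbf{b},N}=\mathbf{U}_\Sigma^N$.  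The one step where something more than bookkeeping is required is identifying the precise coefficient $(m+2)$ in the transformation law for $\beta_\Sigma$; this coefficient is precisely the one needed to match the single $-g_\Sigma(X,Y)J_{\mathbb{S}}(\mathbf{S}_{\mathbf{b}}^\top)$ correction in $\mathbf{A}_\Sigma^{\mathbf{b},N}$ against the three symmetric correction terms that assemble in $\mathbf{E}_\Sigma^{\mathbf{b},N}$, and is what motivates the weight $\tfrac{1}{m+2}$ in the definition of $\mathbf{U}_\Sigma^N$.
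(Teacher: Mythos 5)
Your proposal is correct and follows essentially the same route as the paper: factor $\Psi=\Psi_A\circ\Psi_{\mathbf{b}}$, dispose of the unitary factor as an isometry, and reduce the $\Psi_{\mathbf{b}}$ case to checking that $\mathbf{U}_\Sigma^N$ and $\mathring{\mathbf{A}}_\Sigma^{\hat N}$ are unchanged when $g_{\mathbb{S}}$ is replaced by $g_{\mathbb{S}}^{\mathbf{b}}$, which the paper likewise deduces from Proposition \ref{CurvatureChangeProp} (you simply carry out the projection and cancellation details that the paper leaves implicit). Your reading of $\mathbf{A}_\Sigma^{\hat N}$ as the trace-free $\mathring{\mathbf{A}}_\Sigma^{\hat N}$ is also the correct one, consistent with what the paper's proof actually establishes and with how the proposition is used later.
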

\begin{proof}
Factor $\Psi=\Psi_A\circ \Psi_{\mathbf{b}}$ where $A\in \mathbf{U}(n+1)$ and $\mathbf{b}\in \mathbb{B}_{\mathbb{C}}^{n+1}$.
Hence, $
\Psi^* g_{\mathbb{S}}= g^{\mathbf{b}}_{\mathbb{S}}.$	
Let us denote by $(\mathbf{U}_\Sigma^N)^{\mathbf{b}}$ and $(\mathring{\mathbf{A}}_\Sigma^{\hat{N}})^{\mathbf{b}}$ the tensor fields of $\Sigma$ defined above but using the metric $g_{\mathbb{S}}^{\mathbf{b}}$.  
It is enough to establish
$$
({\mathbf{U}}_{\Sigma}^N)^{\mathbf{b}}={\mathbf{U}}_{\Sigma}^N \mbox{ and }(\mathring{\mathbf{A}}^{\hat{N}}_\Sigma)^{\mathbf{b}}=\mathring{\mathbf{A}}^{\hat{N}}_\Sigma,
$$
both of which follow from Proposition \ref{CurvatureChangeProp}.
\end{proof}

Finally, we record a useful formula for how a third order curvature transforms.
\begin{prop}\label{divJHprop}
	Let $\Sigma\subset \mathbb{S}^{2n+1}$ be a horizontal submanifold of dimension $m$.  The following holds:
	\begin{align*}
		\mathrm{div}_{g_{\Sigma}^{\mathbf{b}}} & J_{\mathbb{S}}((\mathbf{H}_\Sigma^{\mathbf{b}})^N)=W_{\mathbf{b}}^{-1} \left(\mathrm{div}_{g_\Sigma} J_{\mathbb{S}}(\mathbf{H}_\Sigma^N)+2m \mathbf{S}_{\mathbf{b}}\cdot \mathbf{H}_\Sigma^N+(m+2)\mathbf{S}_{\mathbf{b}}\cdot \mathbf{H}_\Sigma^{\hat{N}}\right)\\
		& -\frac{m(m+2)}{4} W_{\mathbf{b}}\nabla_\Sigma  W_{\mathbf{b}}^{-1} \cdot J_{\mathbb{S}}(\nabla_{\mathbb{S}}  W_{\mathbf{b}}^{-1} )-\frac{m(m+2)}{1-|\mathbf{b}|^2} J_{\Real}(\mathbf{b})\cdot \mathbf{X}.
	\end{align*}
\end{prop}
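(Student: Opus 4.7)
The plan is to reduce everything to quantities relative to $g_\Sigma$ and $g_\mathbb{S}$, using Proposition~\ref{CurvatureChangeProp} to handle the second fundamental form, the Sasakian identity of Lemma~\ref{SasakianLem} for the trace terms, and finally the explicit formula for $W_\mathbf{b}$ to identify the purely $\mathbf{b}$-dependent contribution.

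First, I would apply Proposition~\ref{CurvatureChangeProp} to decompose $(\mathbf{H}_\Sigma^\mathbf{b})^N$. Splitting $\mathbf{S}_\mathbf{b} = \mathbf{S}_\mathbf{b}^\top + \mathbf{S}_\mathbf{b}^N + \mathbf{S}_\mathbf{b}^{\hat{N}}$ and using that $J_\mathbb{S}$ swaps $T\Sigma$ and $J_\mathbb{S}(T\Sigma)$ (with a sign) and preserves $\hat{N}\Sigma$, only $J_\mathbb{S}(\mathbf{S}_\mathbf{b}^\top)$ contributes to the $N$-direction of $(J_\mathbb{S}(\mathbf{S}_\mathbf{b}))^\perp$; since $\mathbf{H}_\Sigma$ has no Reeb component by Proposition~\ref{CurvatureFactsProp}, applying $J_\mathbb{S}$ and $J_\mathbb{S}^2=-\mathrm{Id}$ on $\mathcal{H}$ gives the tangent vector field
$$V^\mathbf{b} := J_\mathbb{S}\bigl((\mathbf{H}_\Sigma^\mathbf{b})^N\bigr) = W_\mathbf{b}^{-1}\bigl[J_\mathbb{S}(\mathbf{H}_\Sigma^N) + (m+2)\mathbf{S}_\mathbf{b}^\top\bigr].$$

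Next, I would expand the divergence using two identities. The conformal relation $g_\Sigma^\mathbf{b} = W_\mathbf{b}\, g_\Sigma$ gives
$$\mathrm{div}_{g_\Sigma^\mathbf{b}}(W_\mathbf{b}^{-1} W) = W_\mathbf{b}^{-1}\mathrm{div}_{g_\Sigma} W + \left(\frac{m}{2}-1\right) W_\mathbf{b}^{-2}\, W\cdot W_\mathbf{b}$$
for any tangent $W$ on $\Sigma$, and the ambient-to-intrinsic formula $\mathrm{div}_{g_\Sigma} X^\top = \sum_i g_\mathbb{S}(\nabla^\mathbb{S}_{e_i} X, e_i) + g_\mathbb{S}(X, \mathbf{H}_\Sigma)$ applied to $X = \mathbf{S}_\mathbf{b}$ directly produces the $\mathbf{S}_\mathbf{b}\cdot\mathbf{H}_\Sigma^N$ and $\mathbf{S}_\mathbf{b}\cdot\mathbf{H}_\Sigma^{\hat{N}}$ contributions. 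Substituting $\mathbf{S}_\mathbf{b} = -\frac{1}{2} J_\mathbb{S}(\nabla^\mathbb{S}\log W_\mathbf{b})$ and applying the Sasakian identity $(\nabla^\mathbb{S}_Y J_\mathbb{S})Z = g_\mathbb{S}(Y,Z)\hat{\mathbf{T}} - \hat\theta(Z) Y$ converts the remaining trace into
$$\sum_i g_\mathbb{S}(\nabla^\mathbb{S}_{e_i}\mathbf{S}_\mathbf{b}, e_i) = \frac{1}{2}\sum_i \Hess^\mathbb{S}(\log W_\mathbf{b})(e_i, J_\mathbb{S}(e_i)) + \frac{m}{2}\,\hat{\mathbf{T}}\cdot\log W_\mathbf{b}.$$
Moreover, since $\nabla^\mathcal{H}\log W_\mathbf{b} = 2 J_\mathbb{S}(\mathbf{S}_\mathbf{b})$ (by applying $J_\mathbb{S}$ to the definition of $\mathbf{S}_\mathbf{b}$), the cross term $J_\mathbb{S}(\mathbf{H}_\Sigma^N)\cdot W_\mathbf{b}$ equals $2W_\mathbf{b}\,\mathbf{S}_\mathbf{b}\cdot\mathbf{H}_\Sigma^N$, which upgrades the coefficient of $\mathbf{S}_\mathbf{b}\cdot\mathbf{H}_\Sigma^N$ from $m+2$ to the target $2m$.

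Finally, I would compute $\hat{\mathbf{T}}\cdot\log W_\mathbf{b}$ directly from the definition of $W_\mathbf{b}$. Using $\mathbf{T}(1+\mathbf{b}_1\cdot\mathbf{x}+\mathbf{b}_2\cdot\mathbf{y}) = -J_\Real(\mathbf{b})\cdot\mathbf{X}$ and $\mathbf{T}(\mathbf{b}_1\cdot\mathbf{y}-\mathbf{b}_2\cdot\mathbf{x}) = \mathbf{b}\cdot\mathbf{X}$, one obtains
$$\hat{\mathbf{T}}\cdot\log W_\mathbf{b} = \frac{2 W_\mathbf{b}}{1-|\mathbf{b}|^2}\, J_\Real(\mathbf{b})\cdot\mathbf{X},$$
which supplies the explicit $J_\Real(\mathbf{b})\cdot\mathbf{X}$ term. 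The residual Hessian sum together with the leftover $\mathbf{S}_\mathbf{b}^\top\cdot W_\mathbf{b}$ pieces must reassemble, via $J_\mathbb{S}(\nabla_\mathbb{S} W_\mathbf{b}) = -2 W_\mathbf{b}\mathbf{S}_\mathbf{b}$, into $-\frac{m(m+2)}{4}\, W_\mathbf{b}\nabla_\Sigma W_\mathbf{b}^{-1}\cdot J_\mathbb{S}(\nabla_\mathbb{S} W_\mathbf{b}^{-1})$. The hardest part will be this final identification: the partial trace $\sum_i\Hess^\mathbb{S}(\log W_\mathbf{b})(e_i, J_\mathbb{S}(e_i))$ is not manifestly antisymmetric on $T\Sigma$ (antisymmetry holds only over a full horizontal orthonormal basis), so the identification requires computing the second derivatives of $\log W_\mathbf{b}$ directly, exploiting its pluriharmonic decomposition $\log W_\mathbf{b} = \log(1-|\mathbf{b}|^2) - \log|1+\bar{\mathbf{b}}\cdot\mathbf{z}|^2$.
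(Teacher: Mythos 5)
Your plan follows the paper's proof essentially step for step: Proposition~\ref{CurvatureChangeProp} for $J_{\mathbb{S}}((\mathbf{H}_\Sigma^{\mathbf{b}})^N)$, the conformal change-of-divergence formula, the tangential divergence identity $\mathrm{div}_{g_\Sigma}(\mathbf{S}_{\mathbf{b}}^\top)=\mathrm{div}_{g_\Sigma}(\mathbf{S}_{\mathbf{b}})+\mathbf{H}_\Sigma\cdot\mathbf{S}_{\mathbf{b}}$, the Sasakian identity for $\nabla^{\mathbb{S}}J_{\mathbb{S}}$, and the explicit (rank-two) Hessian of $W_{\mathbf{b}}^{-1}$ to evaluate the partial trace $\sum_i\nabla^2_{\mathbb{S}}\log W_{\mathbf{b}}(E_i,J_{\mathbb{S}}(E_i))$ --- exactly the route the paper takes, including your observation that the final identification requires the explicit second derivatives rather than an antisymmetry argument. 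One caveat: with the paper's convention $J_{\Real}(\partial_{x_j})=-\partial_{y_j}$ one has $\mathbf{T}(1+\mathbf{b}_1\cdot\mathbf{x}+\mathbf{b}_2\cdot\mathbf{y})=+J_{\Real}(\mathbf{b})\cdot\mathbf{X}$ and hence $\hat{\mathbf{T}}\cdot\log W_{\mathbf{b}}=-\tfrac{2W_{\mathbf{b}}}{1-|\mathbf{b}|^2}\,J_{\Real}(\mathbf{b})\cdot\mathbf{X}$, so the sign in your third step is reversed relative to the paper and, if carried through, would flip the sign of the last term of the proposition.
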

\begin{proof}
We first record some useful calculations
	\begin{equation}
		\label{NablaWbEqn}
		\nabla_{\Real} W_{\mathbf{b}}^{-1} = \frac{ 2(1+\mathbf{b}\cdot\mathbf{X}) \mathbf{b}+ 2(J_{\Real}(\mathbf{b})\cdot \mathbf{X} ) J_{\Real}(\mathbf{b})}{1-|\mathbf{b}|^2}
	\end{equation}
	\begin{equation}
		\label{HessWbEqn}
	\nabla^2_{\Real}W_{\mathbf{b}}^{-1} =\frac{ 2 \mathbf{b}\otimes \mathbf{b}+ 2 J_{\Real}(\mathbf{b}) \otimes J_{\Real}(\mathbf{b})}{1-|\mathbf{b}|^2}.
\end{equation}
It follows from \eqref{NablaWbEqn} that
$$
\hat{\mathbf{T}} \cdot W_{\mathbf{b}}^{-1} = \frac{2}{1-|\mathbf{b}|^2} \mathbf{b}\cdot \hat{\mathbf{T}}=\frac{2}{1-|\mathbf{b}|^2} J_{\Real}(\mathbf{b})\cdot {\mathbf{X}}.
$$

	By Proposition \ref{CurvatureChangeProp}, 
	$$
	J_{\mathbb{S}}((\mathbf{H}_\Sigma^{\mathbf{b}})^N)=  W_{\mathbf{b}}^{-1} J_{\mathbb{S}}(\mathbf{H}_{\Sigma}^N)+(m+2)W_{\mathbf{b}}^{-1} \mathbf{S}_{\mathbf{b}}^\top.
	$$
	Using that $g_{\Sigma}^{\mathbf{b}}= W_{\mathbf{b}} g_{\Sigma}$ and definition of divergence, we have 
	\begin{align*} \mathrm{div}_{g_{\Sigma}^{\mathbf{b}}} J_{\mathbb{S}}((\mathbf{H}_\Sigma^{\mathbf{b}})^N)&=W_{\mathbf{b}}^{-1} \mathrm{div}_{g_\Sigma} J_{\mathbb{S}}(\mathbf{H}_\Sigma^N)+\frac{m-2}{2} W_{\mathbf{b}}^{-2} \nabla_{\Sigma} W_{\mathbf{b}} \cdot  J_{\mathbb{S}}(\mathbf{H}_\Sigma^N) \\
		&+(m+2) W_{\mathbf{b}}^{-1} \mathrm{div}_{g_\Sigma} (\mathbf{S}_{\mathbf{b}}^\top)-\frac{m^2-4}{2}  \nabla_{\Sigma} W_{\mathbf{b}}^{-1} \cdot   \mathbf{S}_{\mathbf{b}}^\top. 
	\end{align*}
It is straightforward to see
$$
\frac{m-2}{2} W_{\mathbf{b}}^{-2} \nabla_{\Sigma} W_{\mathbf{b}} \cdot  J_{\mathbb{S}}(\mathbf{H}_\Sigma^N)=(m-2) W_{\mathbf{b}}^{-1}\mathbf{S}_{\mathbf{b}} \cdot \mathbf{H}_\Sigma^N \mbox{ and }
$$
$$
-\frac{m^2-4}{2}  \nabla_{\Sigma} W_{\mathbf{b}}^{-1} \cdot   \mathbf{S}_{\mathbf{b}}^\top=-\frac{m^2-4}{4}  W_{\mathbf{b}} \nabla_{\Sigma} W_{\mathbf{b}}^{-1} \cdot J_{\mathbb{S}}(\nabla_{\mathbb{S}} W_{\mathbf{b}}^{-1}).
$$

A basic computation yields
	\begin{align*}
		\mathrm{div}_{g_\Sigma} (\mathbf{S}_{\mathbf{b}}^\top)&= \mathrm{div}_{g_\Sigma} (\mathbf{S}_{\mathbf{b}})+\mathbf{H}_\Sigma \cdot \mathbf{S}_{\mathbf{b}}=\mathrm{div}_{g_\Sigma} (\mathbf{S}_{\mathbf{b}})+ \mathbf{H}_\Sigma^N \cdot \mathbf{S}_{\mathbf{b}}^N+\mathbf{H}_\Sigma^{\hat{N}} \cdot \mathbf{S}_{\mathbf{b}}^{\hat{N}}.
	\end{align*}
While we have	
	\begin{align*}
		\mathrm{div}_\Sigma \mathbf{S}_{\mathbf{b}}&=-\frac{1}{2}\sum_{i=1}^m g_\mathbb{S}(\nabla^\mathbb{S}_{E_i} J_{\mathbb{S}}(\nabla^{\mathcal{H}} \log W_{\mathbf{b}}), E_i)=-\frac{1}{2}\sum_{i=1}^m g_\mathbb{S}(\nabla^\mathbb{S}_{E_i} J_{\mathbb{S}}(\nabla_{\mathbb{S}} \log W_{\mathbf{b}}), E_i)\\
		&=-\frac{1}{2}\sum_{i=1}^m g_\mathbb{S}( J_{\mathbb{S}}(\nabla^\mathbb{S}_{E_i}\nabla_{\mathbb{S}} \log W_{\mathbf{b}})-\hat{\theta}(\nabla_{\mathbb{S}} \log W_{\mathbf{b}})  E_i , E_i)\\
		&=\frac{m}{2} \hat{\mathbf{T}}\cdot \log W_{\mathbf{b}} +
		\frac{1}{2}  \sum_{i=1}^m \nabla^2_{\mathbb{S}} \log W_{\mathbf{b}}(E_i, J_{\mathbb{S}}(E_i))
	\end{align*}
	where $E_1, \ldots, E_m$ is an orthonormal basis of $T_p \Sigma$.  
It follows from \eqref{HessWbEqn} that
	\begin{align*}
		\mathrm{div}_{\Sigma}(\mathbf{S}_{\mathbf{b}})
		&=\frac{m}{2} \hat{\mathbf{T}}\cdot \log W_{\mathbf{b}} +\frac{1}{2} g_{\mathbb{S}}(\nabla_{\mathbb{S}} \log W_{\mathbf{b}}, J_{\mathbb{S}}(\nabla_{\Sigma} \log W_{\mathbf{b}}))\\
		&= -\frac{m}{1-|\mathbf{b}|^2} W_{\mathbf{b}} J_{\Real}(\mathbf{b})\cdot {\mathbf{X}}-\frac{1}{2}W_{\mathbf{b}}^{2} \nabla_{\Sigma} W_{\mathbf{b}}^{-1}\cdot J_{\mathbb{S}}\nabla_{\mathbb{S}}  W_{\mathbf{b}}^{-1}.
	\end{align*}
Putting everything together gives the result.
\end{proof}

\subsection{Characterization of CR-umbilical horizontal submanifolds} \label{UmbilicCharSec}

It is useful to have the following lemma:
\begin{lem}\label{DefRotLem}
	For any $p\in \mathbb{S}^{2n+1}$ and $\mathbf{b}\in \mathbb{B}^{2n+2}$, there is a $\Psi\in \mathrm{Aut}_{CR}(\mathbb{S}^{2n+1})$ so
	$$
	\Psi(p)=p \mbox{ and }  D\Psi_p |_{\mathcal{H}_p} =\sqrt{W_{\mathbf{b}}(p)} I|_{\mathcal{H}_p}.
	$$
\end{lem}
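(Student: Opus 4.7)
The plan is to build $\Psi$ as a composition $\Psi_A\circ\Psi_{\mathbf{b}}$ of the form dictated by \eqref{StructureAutCEqn}, so that $\Psi_{\mathbf{b}}$ delivers the correct horizontal conformal factor and a suitably chosen unitary rotation $\Psi_A$ both pins the image back to $p$ and compensates for the $J$-linear isometry part of $D\Psi_{\mathbf{b}}|_{\mathcal{H}_p}$.

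First I would set $q=\Psi_{\mathbf{b}}(p)$. Since $\Psi_{\mathbf{b}}\in\mathrm{Aut}_{CR}(\mathbb{S}^{2n+1})$, its differential at $p$ carries $\mathcal{H}_p$ to $\mathcal{H}_q$ and commutes with $J_{\mathbb{S}}$. By the conformal relation \eqref{ConformalMetEqn} applied at $p$, the map $D\Psi_{\mathbf{b}}|_{\mathcal{H}_p}$ is conformal with factor $\sqrt{W_{\mathbf{b}}(p)}$, so we may factor
\[
D\Psi_{\mathbf{b}}|_{\mathcal{H}_p}=\sqrt{W_{\mathbf{b}}(p)}\,L,
\]
where $L\colon\mathcal{H}_p\to\mathcal{H}_q$ is a $J$-linear isometry.

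Next I would choose the unitary $A$. Thinking of $\mathbb{S}^{2n+1}\subset\mathbb{C}^{n+1}$ as the unit sphere and of $\mathcal{H}_p$, $\mathcal{H}_q$ as the Hermitian-orthogonal complements of $\mathbb{C} p$ and $\mathbb{C} q$ respectively, I define $A\in\mathbf{U}(n+1)$ to be the complex-linear map sending $q\mapsto p$ and acting as $L^{-1}\colon\mathcal{H}_q\to\mathcal{H}_p$ on the Hermitian complement of $q$. This is unitary because $L^{-1}$ is a $J$-linear isometry and $|p|=|q|=1$. Then $\Psi_A(\mathbf{z})=A\mathbf{z}$ satisfies $\Psi_A(q)=p$ and $D\Psi_A|_{\mathcal{H}_q}=L^{-1}$.

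Setting $\Psi=\Psi_A\circ\Psi_{\mathbf{b}}$, the chain rule gives $\Psi(p)=p$ and
\[
D\Psi|_{\mathcal{H}_p}=D\Psi_A|_{\mathcal{H}_q}\circ D\Psi_{\mathbf{b}}|_{\mathcal{H}_p}=L^{-1}\circ\sqrt{W_{\mathbf{b}}(p)}\,L=\sqrt{W_{\mathbf{b}}(p)}\,I|_{\mathcal{H}_p},
\]
as desired. There is no serious obstacle; the only point worth care is the identification of $\mathcal{H}_p$ with the Hermitian complement of $\mathbb{C} p$ inside $\mathbb{C}^{n+1}$, which is what allows an arbitrary $J$-linear isometry between two such horizontal spaces to be realized by a global element of $\mathbf{U}(n+1)$.
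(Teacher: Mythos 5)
Your proposal is correct and follows essentially the same route as the paper: both factor the candidate map as $\Psi_A\circ\Psi_{\mathbf{b}}$, use \eqref{ConformalMetEqn} to extract the conformal factor $\sqrt{W_{\mathbf{b}}(p)}$, and use a unitary to absorb the residual $J$-linear isometry of the horizontal space. The only cosmetic difference is that you build the correcting unitary in one step, whereas the paper first moves $\Psi_{\mathbf{b}}(p)$ back to $p$ by an arbitrary unitary and then composes with an element of the stabilizer $\mathbf{U}_p(n+1)$.
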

\begin{proof}
Let $p'=\Psi_{\mathbf{b}}(p)$.  As $\mathbf{U}(n+1)$ acts transitively on $\mathbb{S}^{2n+1}$, there is an $A\in \mathbf{U}(n+1)$ such that $\Psi_A(p')=p$.  Now let $\mathbf{U}_p(n+1)$ be the subgroup of $\mathbf{U}(n+1)$ that fixes $p$.  It is not hard to see that if $\Psi'=\Psi_A \circ \Psi_{\mathbf{b}}$, then $\Psi'(p)=p$, and by \eqref{ConformalMetEqn} 
$$
D_p\Psi'|_{\mathcal{H}_p} =\sqrt{W_{\mathbf{b}}(p)}  D_p \Psi_{B}|_{\mathcal{H}_p}
$$
for $B\in \mathbf{U}_p(n+1)$.  Hence, $\Psi= \Psi_{B}^{-1}\circ \Psi'$, is the desired element.
\end{proof}

\begin{prop}\label{NormalizeMCProp}
	Let $\Sigma\subset \mathbb{S}^{2n+1}$ be a horizontal submanifold.  For any $p\in \Sigma$, there exists $\Psi\in \mathrm{Aut}_{CR}(\mathbb{S}^{2n+1})$ such that if $\Sigma'=\Psi(\Sigma)$, then
	\begin{enumerate}
		\item $p\in \Sigma'$ and $T_p \Sigma=T_p\Sigma'$;
		\item $\mathbf{H}_{\Sigma'}(p)=\mathbf{0}$;
		\item $\mathrm{div}_{\Sigma'}(J_{\mathbb{S}}(\mathbf{H}^N_{\Sigma'}))(p)=0$.		
	\end{enumerate}
\end{prop}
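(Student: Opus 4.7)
The plan is to construct $\Psi$ as a composition $\Psi = \Psi^{(2)}\circ \Psi^{(1)}$, each factor of the form $\Psi_{A_i}\circ \Psi_{\mathbf{b}_i}$ with $A_i\in \mathbf{U}(n+1)$ and $\mathbf{b}_i\in \mathbb{B}^{n+1}_{\mathbb{C}}$. The role of $\Psi_{\mathbf{b}_i}$ is to alter the extrinsic curvature data at $p$ via the transformation formulas of Propositions \ref{CurvatureChangeProp} and \ref{divJHprop}, while $\Psi_{A_i}$ is chosen to restore condition (1). Since $\mathbf{U}(n+1)$ acts transitively on $\mathbb{S}^{2n+1}$ and its stabilizer $\mathbf{U}_p(n+1)\simeq \mathbf{U}(n)$ acts transitively on horizontal totally-real $m$-planes in $\mathcal{H}_p$, the appropriate $A_i$ can always be found (compare Lemma \ref{DefRotLem}); because $\Psi_{A_i}$ is a $g_{\mathbb{S}}$-isometry and CR-automorphism, it preserves the pointwise quantities in (2) and (3), so the analysis reduces to choosing $\mathbf{b}_i$.

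For Stage 1 I aim to make $\mathbf{H}_{\Sigma_1}(p)=0$, where $\Sigma_1=\Psi^{(1)}(\Sigma)$. By Proposition \ref{CurvatureChangeProp} this is equivalent to prescribing
\[
\mathbf{S}_{\mathbf{b}_1}^\top(p) = -\tfrac{1}{m+2}\,J_{\mathbb{S}}(\mathbf{H}_{\Sigma}^N(p)), \qquad \mathbf{S}_{\mathbf{b}_1}^{\hat{N}}(p) = -\tfrac{1}{m}\,J_{\mathbb{S}}(\mathbf{H}_{\Sigma}^{\hat{N}}(p)).
\]
Writing $\mathbf{b}=\alpha p + \beta J_{\Real}(p)+\mathbf{c}$ with $\mathbf{c}\in \mathcal{H}_p$ and $\alpha^2+\beta^2+|\mathbf{c}|^2<1$, a direct computation from the explicit formula for $W_\mathbf{b}$ in Section \ref{ContactGaugeConnectionSec} gives
\[
\mathbf{S}_{\mathbf{b}}(p) = \frac{\beta\,\mathbf{c} + (1+\alpha)\,J_{\Real}(\mathbf{c})}{(1+\alpha)^2+\beta^2},
\]
and by taking $\alpha$ close to $-1$ one sees this map is surjective onto $\mathcal{H}_p$, so both prescribed components are attainable simultaneously. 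This produces $\mathbf{b}_1$ and, after composing with a suitable $\Psi_{A_1}$, the map $\Psi^{(1)}$.

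For Stage 2 I must achieve (3) without destroying (2). The key observation is that for $\mathbf{b}_2$ in the complex line through $p$, i.e.\ $\mathbf{b}_2 = (\alpha+i\beta)p$, the formula above with $\mathbf{c}=0$ yields $\mathbf{S}_{\mathbf{b}_2}(p)=0$, so by Proposition \ref{CurvatureChangeProp} the vanishing $\mathbf{H}_{\Sigma_1}(p)=0$ is preserved automatically. For such $\mathbf{b}_2$ the horizontal component of $\nabla_{\mathbb{S}}W_{\mathbf{b}_2}^{-1}(p)$ also vanishes (only the Reeb-direction piece survives), so Proposition \ref{divJHprop} evaluated at $p$ with $\mathbf{H}_{\Sigma_1}(p)=0$ collapses to the single scalar equation
\[
\bigl((1+\alpha)^2+\beta^2\bigr)\,D_1 + m(m+2)\,\beta = 0, \qquad D_1 := \mathrm{div}_{g_{\Sigma_1}} J_{\mathbb{S}}(\mathbf{H}_{\Sigma_1}^N)(p).
\]
Viewed as a quadratic in $\beta$, its discriminant is $m^2(m+2)^2 - 4D_1^2(1+\alpha)^2$, which is positive once $|1+\alpha|$ is small, and the corresponding root is $O((1+\alpha)^2)$, so $\alpha^2+\beta^2<1$ for $\alpha$ close enough to $-1$. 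This yields $\mathbf{b}_2$; composing with the corresponding $\Psi_{A_2}$ produces $\Psi^{(2)}$, and $\Psi=\Psi^{(2)}\circ\Psi^{(1)}$ has all three properties.

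The main obstacle is this decoupling in Stage 2: one must find a subfamily of $Aut_{CR}(\mathbb{S}^{2n+1})$ that preserves $\mathbf{H}(p)=0$ but can still shift the divergence by any real value. The family $\mathbf{b}_2\in \mathbb{C}p$ is the correct choice because it acts at $p$ ``purely in the Reeb direction" -- the horizontal gradient of $W_{\mathbf{b}_2}$ at $p$ vanishes -- so every term in Proposition \ref{divJHprop} drops out at $p$ except the explicit algebraic piece $-\tfrac{m(m+2)}{1-|\mathbf{b}|^2}J_{\Real}(\mathbf{b})\cdot\mathbf{X}$, which is linear in $\beta$ and provides exactly one scalar degree of freedom to cancel $D_1$. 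Once this structural point is identified, the remaining verification is a routine application of the formulas already established.
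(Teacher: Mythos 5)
Your proposal is correct and follows essentially the same route as the paper: both arguments use Lemma \ref{DefRotLem} (unitary rotations) to restore condition (1), Proposition \ref{CurvatureChangeProp} to kill $\mathbf{H}(p)$ by prescribing $\mathbf{S}_{\mathbf{b}}^\top(p)$ and $\mathbf{S}_{\mathbf{b}}^{\hat N}(p)$, and then the crucial decoupling step of taking $\mathbf{b}$ in the complex line through $p$ so that $\mathbf{S}_{\mathbf{b}}(p)=\mathbf{0}$ and only the algebraic term of Proposition \ref{divJHprop} survives. The only differences are cosmetic: the paper splits your Stage 1 into two rank-one sub-steps (first the $\hat N$-component, then the $N$-component) and exhibits explicit closed-form parameters, whereas you solve the full vector equation at once via surjectivity of $\mathbf{c}\mapsto \mathbf{S}_{\mathbf{b}}(p)$ and use a perturbative choice of $\alpha$ near $-1$.
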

\begin{proof}
  By Lemma \ref{DefRotLem}, the first condition can always be ensured for whatever choice of $\mathbf{b}\in \mathbb{B}^{2n+2}$ we make to ensure the other conditions; denote by $\Sigma_{\mathbf{b}}=\Psi(\Sigma)$ the corresponding surface so $p\in \Sigma_{\mathbf{b}}$ and $T_p \Sigma_{\mathbf{b}}=T_p \Sigma$.   We also simplify by observing that, by acting by $\mathbf{U}(n+1)$, we may assume,  without loss of generality, that 
  $$
  \mathbf{X}(p)=\mathbf{e}_1 \mbox{ and } \mathbf{T}(p)=\mathbf{e}_{n+2},
  $$
 and $T_p \Sigma$ is spanned by  $\mathbf{e}_{j+1}$ and $J_{\mathbb{S}}(T_p\Sigma)$ is spanned by $\mathbf{e}_{j+n+2}$ for  $1\leq j \leq m$. 
  
  We proceed in two stages to choose $\mathbf{b}$ so the second two conditions hold.	
 First,  choose $\mathbf{b}_1$ so that $\mathbf{H}_{\Sigma_{\mathbf{b}_1}}^{\hat{N}}=\mathbf{0}$. Our simplifications ensure that
 $$
 J_{\mathbb{S}}(\mathbf{S}_{\mathbf{b}}(p))=\frac{1}{2} \nabla^{\mathcal{H}}  \log W_{\mathbf{b}}(p)=- \frac{(1+b_1)\mathbf{b}^{\mathcal{H}}(p)+ b_{n+2}J_{\Real}(\mathbf{b})^{\mathcal{H}}(p) }{(1+b_1)^2+b_{n+2}^2}
 $$ 	
 where $\mathbf{v}^{\mathcal{H}}(p)$ is the orthogonal projection of $\mathbf{v}$ onto $\mathcal{H}_p$.
  By Proposition \ref{CurvatureChangeProp}, 
in order  to find $\mathbf{b}_1$ we must ensure
$$
(J_{\mathbb{S}}(\mathbf{S}_{\mathbf{b}_1})(p))^{\hat{N}}=\frac{1}{m}  \mathbf{H}_{\Sigma}^{\hat{N}}(p).
$$
Let $\mathbf{v}_1=\mathbf{H}_\Sigma^{\hat{N}}(p)\in \hat{N}_p\Sigma\subset \mathcal{H}_p$.  Notice that $J_{\mathbb{S}}(\mathbf{v}_1)\in \hat{N}_p\Sigma$.  Set $\mathbf{b}_1=\alpha_1 \mathbf{e}_1+\beta_1 \mathbf{v}_1$ where $\alpha_1$ and $\beta_1$ are to be determined but are small enough such that $\mathbf{b}_1\in \mathbb{B}^{2n+2}$.  We observe that our normalization ensures $$\mathbf{e}_1^{\mathcal{H}}(p)=\mathbf{X}^{\mathcal{H}}=\mathbf{0}\mbox{ and } (J_{\Real}(\mathbf{e}_1))^{\mathcal{H}}(p)= \mathbf{e}_{n+2}^{\mathcal{H}}(p)=\mathbf{T}^{\mathcal{H}}(p)=\mathbf{0}.
$$
That is, $\mathbf{b}_1^{\mathcal{H}}(p)=\beta_1 \mathbf{v}_1^{\mathcal{H}}(p)=\beta_1\mathbf{H}_\Sigma^{\hat{N}}(p)$ and so
$$
J_{\mathbb{S}}(\mathbf{S}_{\mathbf{b}_1}(p))= -\frac{1}{2} \frac{\beta_1 \mathbf{H}_\Sigma^{\hat{N}}(p)}{1+ \alpha_1}.
$$
Hence, it suffices to find $\alpha_1$ and $\beta_1$ such that
$$
0=1+\frac{m\beta_1}{2(1+\alpha_1)} \mbox{ and } |\mathbf{b}_1|^2=\alpha_1^2+\beta_1^2 |\mathbf{H}_\Sigma^{\hat{N}}|^2<1.
$$
One verifies that
$$
\alpha_1= -\frac{4|\mathbf{H}_{\Sigma}^{\hat{N}}|^2}{m^2+4 |\mathbf{H}_{\Sigma}^{\hat{N}}|^2} \mbox{ and } \beta_1=-\frac{2}{m} (1+\alpha_1)=- \frac{2m}{m^2+4 |\mathbf{H}_{\Sigma}^{\hat{N}}|^2}
$$
satisfies these conditions.  Hence, we may now assume that $\Sigma$ satisfies $\mathbf{H}_{\Sigma}^{\hat{N}}(p)=\mathbf{0}$.

Next, we replace $\Sigma$ by $\Sigma_{\mathbf{b}_1}$ and find $\mathbf{b}_2$ so that $\Sigma_{\mathbf{b}_2}$ has vanishing mean curvature vector.  Set $\mathbf{v}_2=\mathbf{H}_{\Sigma}(p)=\mathbf{H}^{N}_\Sigma(p)$ -- here we use that $\mathbf{H}_\Sigma^{\hat{N}}(p)=\mathbf{0}$.  Let $\mathbf{b}_2= \alpha_2 \mathbf{e}_1+\beta_2\mathbf{v}_2\in \mathbb{B}^{2n+2}$
where  $\alpha_2$ and $\beta_2$ are to be determined. The choice of $\mathbf{b}_2$ ensures $J_{\mathbb{S}}(\mathbf{S}_{\mathbf{b}_2})^{\hat{N}}=\mathbf{0}$ and so  Proposition \ref{CurvatureChangeProp} gives 
$$
\mathbf{H}_{\Sigma}^{{\mathbf{b}_2}}(p)= \frac{1}{W_{\mathbf{b}_2}(p)} \mathbf{H}_\Sigma(p)\left( 1+\frac{m+2}{2} \frac{\beta_2}{1+\alpha_2}\right).
$$
As above, we may  take
$$
\alpha_2= -\frac{4|\mathbf{H}_\Sigma|^2}{(m+2)^2+4 |\mathbf{H}_\Sigma|^2} \mbox{ and } \beta_2=-\frac{2}{m+2}(1+\alpha_2)= -\frac{2(m+2)}{(m+2)^2+4|\mathbf{H}_\Sigma|^2}
$$
 which ensures a $\Sigma$ that satisfies the first two conditions.

To conclude, set $\mathbf{b}_3=b_1 \mathbf{e}_1+b_{n+2} \mathbf{e}_{n+2}$. The hypotheses on $\Sigma$ ensure
$$
\mathbf{b}^\top_3(p)=(J_{\mathbb{S}}(\mathbf{b}_3))^\top(p)=\mathbf{S}_{\mathbf{b}_3}^\top(p)=\mathbf{0}
$$
and so $\mathbf{H}_{\Sigma_{\mathbf{b}_3}}(p)=\mathbf{H}_{\Sigma}^{\mathbf{b}_3}(p)=\nabla_\Sigma  W_{\mathbf{b}_3}^{-1}(p)=\mathbf{0}$.  Moreover, 
by Proposition \ref{divJHprop},
$$
\mathrm{div}_{\Sigma_{\mathbf{b}_3}}(J_{\mathbb{S}}(\mathbf{H}_\Sigma^{\mathbf{b}_3})^N)(p)=\frac{1}{W_{\mathbf{b}_3}(p)}\left(\mathrm{div}_\Sigma(J_{\mathbb{S}}(\mathbf{H}^N_{\Sigma}))(p)-\frac{m(m+2)b_{n+2}}{ (1+b_1)^2+b_{n+2}^2}\right).
$$
If we set $\gamma= \mathrm{div}_\Sigma(J_{\mathbb{S}}(\mathbf{H}^N_{\Sigma}))(p)$, then we wish to choose $b_1, b_{n+2}$ such that
$$
0=\gamma-\frac{m(m+2) b_{n+2}}{(1+b_1)^2+b_{n+2}^2} \mbox{ and } |\mathbf{b}_3|^2= b_1^2+b_{n+2}^2<1.
$$
This can be achieved by taking
$$
b_1=-\frac{\gamma^2}{\gamma^2+m^2(m+2)^2} \mbox{ and } b_{n+2}= \frac{m (m+2) \gamma}{\gamma^2+m^2(m+2)^2}.$$ 
Hence, we have shown all three conditions hold for $\Sigma'=\Sigma_{\mathbf{b}_3}$.
\end{proof}

It will be helpful to have the following notation: Let $\Sigma\subset \mathbb{S}^{2n+1}$ be an $m$-dimensional submanifold and $\mathbf{B},  \mathbf{C}$ be symmetric $(0,2)$-tensor fields on $\Sigma$  valued in $T\mathbb{S}^{n+1}$.   Define  a $(0,2)$ tensor field,  $\mathbf{B}\odot \mathbf{C}$, on $\Sigma$  by 
$$(\mathbf{B}\odot\mathbf{C})_p(X,Y)= \sum_{i=1}^m g_{\mathbb{S}}(\mathbf{B}_p(X, E_i), \mathbf{C}_p(E_i,Y))
$$
where $E_1, \ldots, E_m$ is an orthonormal basis of $T_p \Sigma$.  Observe that
 $$(\mathbf{B}\odot \mathbf{C})(X,Y)=(\mathbf{C}\odot \mathbf{B})(Y,X) .
 $$
Hence, $\mathbf{B}\odot \mathbf{B}$ is symmetric and $|\mathbf{B}|^2= \mathrm{tr}_\Sigma (\mathbf{B} \odot \mathbf{B})$.
For $X,Y\in T_p \Sigma$ set
$$
\mathrm{Ric}_\Sigma^N(X,Y)=g_{\mathbb{S}}(\mathbf{H}_\Sigma^N, \mathbf{A}_{\Sigma}^N(X,Y)) -(\mathbf{A}_\Sigma^N\odot \mathbf{A}_\Sigma^N)(X,Y) \mbox{ and }
$$
$$
\mathrm{Ric}_\Sigma^{\hat{N}}(X,Y)=g_{\mathbb{S}}(\mathbf{H}_\Sigma^{\hat{N}}, \mathbf{A}_{\Sigma}^{\hat{N}}(X,Y)) -(\mathbf{A}_\Sigma^{\hat{N}}\odot \mathbf{A}_\Sigma^{\hat{N}})(X,Y).
$$
By the Gauss equations, 
\begin{equation}\label{GaussEqnUeqn}
\mathrm{Ric}_\Sigma(X,Y)=\mathrm{Ric}_\Sigma^N(X,Y)+\mathrm{Ric}_\Sigma^{\hat{N}}(X,Y)+(m-1) g_\Sigma(X,Y).
\end{equation}
\begin{lem}\label{RicciULem}
If $\Sigma \subset \mathbb{S}^{2n+1}$ is an $m$-dimensional horizontal submanifold, then

	\begin{align*}
	\mathrm{Ric}_\Sigma^N&=\frac{m-2}{m+2}  \mathbf{H}_{\Sigma}^N\cdot \mathbf{U}_\Sigma^N-\mathbf{U}_\Sigma^N\odot \mathbf{U}_\Sigma^N
+\frac{m|\mathbf{H}_\Sigma^N|^2}{(m+2)^2}   g_{\Sigma}+ \frac{m-2}{(m+2)^2} \beta_\Sigma\otimes\beta_\Sigma;
\end{align*}
\begin{align*}
\mathrm{Ric}_\Sigma^{\hat{N}} =	\frac{m-2}{m}\mathbf{H}_\Sigma^{\hat{N}}\cdot \mathring{\mathbf{A}}_\Sigma^{\hat{N}} - \mathring{\mathbf{A}}_\Sigma^{\hat{N}}\odot \mathring{\mathbf{A}}_\Sigma^{\hat{N}}+\frac{m-1}{m^2}|\mathbf{H}_\Sigma^{\hat{N}}|^2  g_{\Sigma}.
\end{align*}
\end{lem}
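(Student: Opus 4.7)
Both formulas will be obtained by purely algebraic expansion: one decomposes each component of the second fundamental form into a ``trace-free'' (respectively ``CR-trace-free'') piece plus the mean-curvature piece and substitutes into the defining expressions for $\mathrm{Ric}_\Sigma^N$ and $\mathrm{Ric}_\Sigma^{\hat{N}}$.  No new geometric input beyond what is already in this section is needed; in particular, the totally symmetric tensor fields $\sigma_\Sigma$ and $\sigma_\Sigma^E$ from Proposition \ref{CurvatureFactsProp} and the definition of $\mathbf{E}_\Sigma^N$ do all of the work.

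For the $\hat{N}$ identity I would write $\mathbf{A}_\Sigma^{\hat{N}} = \mathring{\mathbf{A}}_\Sigma^{\hat{N}} + \frac{1}{m}\mathbf{H}_\Sigma^{\hat{N}}\,g_\Sigma$ and plug into the definition of $\mathrm{Ric}_\Sigma^{\hat{N}}$.  Expanding $(\mathbf{A}_\Sigma^{\hat{N}}\odot\mathbf{A}_\Sigma^{\hat{N}})(X,Y)$ produces four terms; two are pure ($\mathring{\mathbf{A}}^{\hat{N}}\odot\mathring{\mathbf{A}}^{\hat{N}}$ and the $\tfrac{1}{m^2}|\mathbf{H}^{\hat{N}}|^2 g_\Sigma$ piece) and the two cross terms each give $\tfrac{1}{m}g_\mathbb{S}(\mathbf{H}^{\hat{N}},\mathring{\mathbf{A}}^{\hat{N}}(X,Y))$ because $\sum_i g_\Sigma(X,E_i)\mathring{\mathbf{A}}^{\hat{N}}(E_i,Y)=\mathring{\mathbf{A}}^{\hat{N}}(X,Y)$.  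Combining these with the analogous expansion of $g_\mathbb{S}(\mathbf{H}^{\hat{N}},\mathbf{A}^{\hat{N}}(X,Y))$ yields the coefficients $\frac{m-2}{m}$ and $\frac{m-1}{m^2}$.

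For the $N$ identity I expand $\mathbf{A}_\Sigma^N = \mathbf{U}_\Sigma^N + \frac{1}{m+2}\mathbf{E}_\Sigma^N$ and substitute, so that the computation reduces to three subcalculations:
\begin{align*}
&\text{(i)}\quad g_\mathbb{S}(\mathbf{H}_\Sigma^N,\mathbf{E}_\Sigma^N(X,Y)) = |\mathbf{H}_\Sigma^N|^2\, g_\Sigma(X,Y) + 2\,\beta_\Sigma(X)\beta_\Sigma(Y),\\
&\text{(ii)}\quad (\mathbf{U}_\Sigma^N\odot\mathbf{E}_\Sigma^N+\mathbf{E}_\Sigma^N\odot\mathbf{U}_\Sigma^N)(X,Y) = 4\,g_\mathbb{S}(\mathbf{H}_\Sigma^N,\mathbf{U}_\Sigma^N(X,Y)),\\
&\text{(iii)}\quad (\mathbf{E}_\Sigma^N\odot\mathbf{E}_\Sigma^N)(X,Y) = 2|\mathbf{H}_\Sigma^N|^2\, g_\Sigma(X,Y) + (m+6)\,\beta_\Sigma(X)\beta_\Sigma(Y).
\end{align*}
Identity (i) is immediate from the defining formula for $\mathbf{E}_\Sigma^N$ and the relation $g_\mathbb{S}(\mathbf{H}_\Sigma^N,J_\mathbb{S}(Y))=\beta_\Sigma(Y)$.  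Identity (ii) is the interesting one: the $\mathbf{H}^N$-summand in $\mathbf{E}^N$ contributes a factor of $g_\mathbb{S}(\mathbf{H}_\Sigma^N,\mathbf{U}_\Sigma^N(X,Y))$ directly; the summand involving $\beta_\Sigma(E_i)J_\Real(Y)$ is handled by replacing $\sum_i\beta_\Sigma(E_i)E_i = \beta_\Sigma^\# = -J_\mathbb{S}(\mathbf{H}_\Sigma^N)$ and then invoking the total symmetry of $\sigma_\Sigma^U(X,Y,Z)=g_\mathbb{S}(\mathbf{U}_\Sigma^N(X,Y),J_\mathbb{S}(Z))$ (inherited from the total symmetry of both $\sigma_\Sigma$ and $\sigma_\Sigma^E$) to permute $J_\mathbb{S}(\mathbf{H}_\Sigma^N)$ into the third slot; and the remaining $\beta_\Sigma(Y)J_\Real(E_i)$ summand vanishes because its coefficient is $\mathrm{tr}_\Sigma\sigma_\Sigma^U(\cdot,\cdot,X)$, which is zero since $\mathrm{tr}_\Sigma\mathbf{U}_\Sigma^N=0$.

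Identity (iii) requires only direct bookkeeping: expanding the nine-term product and simplifying each piece using $\sum_i g_\Sigma(X,E_i)g_\Sigma(E_i,Y)=g_\Sigma(X,Y)$, $\sum_i g_\Sigma(X,E_i)\beta_\Sigma(E_i)=\beta_\Sigma(X)$, $|\beta_\Sigma|^2 = |\mathbf{H}_\Sigma^N|^2$, and $g_\mathbb{S}(J_\Real(Z),J_\Real(W)) = g_\Sigma(Z,W)$ for horizontal $Z,W$ yields the stated coefficients.  Assembling (i)--(iii) with the correct factors of $\frac{1}{m+2}$ and $\frac{1}{(m+2)^2}$ gives
\[
\tfrac{m-2}{m+2}\mathbf{H}_\Sigma^N\!\cdot\!\mathbf{U}_\Sigma^N - \mathbf{U}_\Sigma^N\odot\mathbf{U}_\Sigma^N + \tfrac{m}{(m+2)^2}|\mathbf{H}_\Sigma^N|^2 g_\Sigma + \tfrac{m-2}{(m+2)^2}\beta_\Sigma\otimes\beta_\Sigma,
\]
as claimed.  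The only real obstacle is the careful accounting of (iii) and of the cross term (ii); the rest is algebra.
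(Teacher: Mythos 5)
Your proposal is correct and follows essentially the same route as the paper: both decompose $\mathbf{A}_\Sigma^N=\mathbf{U}_\Sigma^N+\tfrac{1}{m+2}\mathbf{E}_\Sigma^N$ and $\mathbf{A}_\Sigma^{\hat N}=\mathring{\mathbf{A}}_\Sigma^{\hat N}+\tfrac{1}{m}\mathbf{H}_\Sigma^{\hat N}g_\Sigma$, compute $\mathbf{E}_\Sigma^N\odot\mathbf{E}_\Sigma^N$ and the cross terms via the total symmetry of the associated $(0,3)$ tensors, and assemble; the only difference is that you substitute directly into the definition of $\mathrm{Ric}_\Sigma^N$ (computing $\mathbf{U}_\Sigma^N\odot\mathbf{E}_\Sigma^N$) whereas the paper first expresses $\mathbf{U}_\Sigma^N\odot\mathbf{U}_\Sigma^N$ in terms of $\mathbf{A}_\Sigma^N\odot\mathbf{A}_\Sigma^N$, $\mathbf{A}_\Sigma^N\odot\mathbf{E}_\Sigma^N$, and $\mathbf{E}_\Sigma^N\odot\mathbf{E}_\Sigma^N$ and then solves. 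Your intermediate identities (i)--(iii) and the final coefficients all check out.
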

\begin{proof}
	Let $E_{i}, 1\leq i\leq m$ be an orthonormal basis of $T_p \Sigma$.  Using the symmetries of $\sigma_\Sigma$, i.e.,  Proposition \ref{CurvatureFactsProp}, we compute that
	\begin{align*}
		\mathbf{A}_{\Sigma}^N&\odot \mathbf{E}_{\Sigma}^N=	\mathbf{E}_{\Sigma}^N\odot \mathbf{A}_{\Sigma}^N\\
		&=\sum_{i=1}^m\big( \mathbf{H}_{\Sigma}^N\cdot \mathbf{A}_\Sigma^N(\cdot ,E_i) g_\Sigma(E_i,\cdot ) + \sigma_{\Sigma}(\cdot, E_i,  \cdot) \beta_\Sigma(E_i)+\sigma_\Sigma (\cdot, E_i, E_i) \beta_\Sigma(\cdot)\big)\\
			&= 2\mathbf{H}_{\Sigma}^N\cdot \mathbf{A}_\Sigma +\beta_\Sigma\otimes\beta_\Sigma.
	\end{align*}
	One also has
	\begin{align*}
			\mathbf{E}_{\Sigma}^N\odot \mathbf{E}_{\Sigma}^N=(m+6)\beta_{\Sigma}\otimes\beta_{\Sigma} +2|\mathbf{H}_\Sigma^N|^2 g_\Sigma.
	\end{align*}
Hence,
	\begin{align*}
	\mathbf{U}_\Sigma^N\odot\mathbf{U}_\Sigma^N &=  \mathbf{A}_\Sigma^N\odot\mathbf{A}_\Sigma^N-\frac{2}{m+2} \mathbf{A}_\Sigma^N\odot \mathbf{E}_\Sigma^N+\frac{1}{(m+2)^2} \mathbf{E}_\Sigma^N \odot \mathbf{E}_\Sigma^N\\
		&=\mathbf{A}_\Sigma^N\odot \mathbf{A}^N_\Sigma-\frac{4}{m+2} \mathbf{H}_{\Sigma}^N\cdot \mathbf{A}_\Sigma^N+\frac{2|\mathbf{H}_\Sigma^N|^2}{(m+2)^2} g_\Sigma-\frac{m-2}{(m+2)^2} \beta_\Sigma\otimes \beta_\Sigma.
	\end{align*}
It follows that
\begin{align*}
 \mathrm{Ric}_\Sigma^N& =\frac{m-2}{m+2} \mathbf{H}_{\Sigma}^N\cdot \mathbf{A}_\Sigma+\frac{2|\mathbf{H}_\Sigma^N|^2}{(m+2)^2} g_\Sigma-\frac{m-2}{(m+2)^2} \beta_\Sigma\otimes \beta_\Sigma-\mathbf{U}_\Sigma^N\odot \mathbf{U}^N_\Sigma,
\end{align*}
	which yields the first formula.  The second formula follows from 
	$$
	\mathbf{A}_\Sigma^{\hat{N}}\odot\mathbf{A}_\Sigma^{\hat{N}}=\mathring{\mathbf{A}}_\Sigma^{\hat{N}}\odot \mathring{\mathbf{A}}_\Sigma^{\hat{N}}+\frac{2}{m} \mathbf{H}_\Sigma^{\hat{N}} 
\cdot \mathring{\mathbf{A}}_\Sigma^{\hat{N}}+\frac{1}{m^2} |\mathbf{H}_\Sigma^{\hat{N}}|^2 g_\Sigma.
$$
\end{proof}

\begin{thm}\label{CRinv}
	Let $\Sigma\subset \mathbb{S}^{2n+1}$ be a connected horizontal submanifold of dimension $m\geq 2$. If $\mathbf{U}_{\Sigma}^N$ and $\mathring{\mathbf{A}}_\Sigma^{\hat{N}}$ both vanish identically, then $\Sigma$ is a contact Whitney sphere, i.e., there is a $\mathbf{b}\in \mathbb{B}^{2n+2}$ so that ${\Psi}_{\mathbf{b}}(\Sigma)$ is a subset of a totally geodesic sphere. 
\end{thm}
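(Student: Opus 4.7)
The hypotheses of the theorem are CR-invariant: Proposition \ref{UInvProp} (whose proof in fact establishes $(\mathbf{U}^N_\Sigma)^{\mathbf{b}} = \mathbf{U}^N_\Sigma$ and $(\mathring{\mathbf{A}}^{\hat{N}}_\Sigma)^{\mathbf{b}} = \mathring{\mathbf{A}}^{\hat{N}}_\Sigma$) ensures that $\Psi(\Sigma)$ remains CR-umbilical for every $\Psi\in Aut_{CR}(\mathbb{S}^{2n+1})$, and being a contact Whitney sphere is likewise CR-invariant. So I would fix any $p\in\Sigma$ and apply Proposition \ref{NormalizeMCProp} to replace $\Sigma$ by a CR-image satisfying $\mathbf{H}_\Sigma(p)=\mathbf{0}$ and $\mathrm{div}_\Sigma(J_{\mathbb{S}}(\mathbf{H}^N_\Sigma))(p)=0$. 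After this reduction it suffices to show $\Sigma$ is totally geodesic in $\mathbb{S}^{2n+1}$, since a connected totally geodesic submanifold of $\mathbb{S}^{2n+1}$ is contained in a totally geodesic $\mathbb{S}^m$; pulling back by the inverse CR-automorphism then exhibits the original $\Sigma$ as a contact Whitney sphere.

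At the normalized point $p$ itself, CR-umbilicality combined with $\mathbf{H}_\Sigma(p)=\mathbf{0}$ already yields $\mathbf{A}_\Sigma(p)=\mathbf{0}$: the assumption $\mathring{\mathbf{A}}^{\hat{N}}_\Sigma\equiv 0$ gives $\mathbf{A}^{\hat{N}}_\Sigma(p)=\frac{1}{m}g_\Sigma(\cdot,\cdot)\mathbf{H}^{\hat{N}}_\Sigma(p)=0$, while $\mathbf{H}^N_\Sigma(p)=0$ forces $\beta_\Sigma(p)=0$ by Proposition \ref{CurvatureFactsProp}(3), and then the CR-umbilical identity $\mathbf{A}^N_\Sigma=\frac{1}{m+2}\mathbf{E}^N_\Sigma$ makes $\mathbf{A}^N_\Sigma(p)$ vanish. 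The task is therefore to propagate $\mathbf{A}_\Sigma(p)=\mathbf{0}$ to global vanishing; by CR-umbilicality this is equivalent to showing that the tangential vector field $v=J_{\mathbb{S}}(\mathbf{H}^N_\Sigma)$ (dual to $\beta_\Sigma$) and the section $\mathbf{H}^{\hat{N}}_\Sigma$ of $\hat{N}\Sigma$ vanish identically.

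The main step of the plan is to combine the Codazzi-Mainardi equation for $\Sigma\subset\mathbb{S}^{2n+1}$ with the CR-umbilical expressions for $\mathbf{A}^N_\Sigma$ and $\mathbf{A}^{\hat{N}}_\Sigma$ to obtain a first-order linear system for $(v,\mathbf{H}^{\hat{N}}_\Sigma)$. The projections of Codazzi onto the normal summands $\Real\hat{\mathbf{T}}\oplus J_{\mathbb{S}}(T\Sigma)\oplus\hat{N}\Sigma$ are governed by the Sasakian identities $\nabla^{\mathbb{S}}_X\hat{\mathbf{T}}=-J_{\mathbb{S}}(X)$ and $(\nabla^{\mathbb{S}}_X J_{\mathbb{S}})Y=g_{\mathbb{S}}(X,Y)\hat{\mathbf{T}}-\hat{\theta}(Y)X$, which together with the formulas developed in Section \ref{ContactGaugeConnectionSec} make the system explicit. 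The initial data $v(p)=0$ and $\mathbf{H}^{\hat{N}}_\Sigma(p)=0$, augmented by the second-order normalization $\mathrm{div}_\Sigma(v)(p)=0$, should then force $v\equiv 0$ and $\mathbf{H}^{\hat{N}}_\Sigma\equiv 0$ on $\Sigma$, in direct analogy with the classical Codazzi argument that a connected totally umbilic submanifold of $\mathbb{S}^n$ of dimension $m\geq 2$ has constant mean curvature.

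The main obstacle is this Codazzi computation: the normal connection does not respect the decomposition of $N\Sigma$, so carefully isolating the $J_{\mathbb{S}}(T\Sigma)$-, $\Real\hat{\mathbf{T}}$-, and $\hat{N}\Sigma$-components of the Codazzi identity is delicate and depends on the full Sasakian structure. This is precisely where $m\geq 2$ enters (to rule out the one-dimensional case where Codazzi is vacuous) and where the third-order formula of Proposition \ref{divJHprop} is used, via Proposition \ref{NormalizeMCProp}, to arrange the extra scalar condition at $p$ needed to close the overdetermined system. Once $\mathbf{H}_\Sigma\equiv 0$ is established, CR-umbilicality immediately gives $\mathbf{A}_\Sigma\equiv 0$, completing the proof.
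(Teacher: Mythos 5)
Your reduction matches the paper's: normalize at a point $p$ via Proposition \ref{NormalizeMCProp}, observe that CR-umbilicality together with $\mathbf{H}_\Sigma(p)=\mathbf{0}$ gives $\mathbf{A}_\Sigma(p)=\mathbf{0}$, and try to propagate. The part of your plan concerning $\mathbf{H}^{\hat{N}}_\Sigma$ is sound: Codazzi together with $\mathring{\mathbf{A}}^{\hat{N}}_\Sigma\equiv 0$ does give $(\nabla^\perp \mathbf{H}^{\hat{N}}_\Sigma)^{\hat{N}}=0$, so $|\mathbf{H}^{\hat{N}}_\Sigma|$ is constant and vanishes identically once it vanishes at one point; this is exactly how the paper disposes of the $\hat{N}$-part.

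The gap is in the propagation of $v=J_{\mathbb{S}}(\mathbf{H}^N_\Sigma)$. You assert that Codazzi produces a closed first-order \emph{linear} system for $(v,\mathbf{H}^{\hat{N}}_\Sigma)$ whose vanishing at $p$ forces global vanishing, ``in direct analogy with the classical Codazzi argument'' for totally umbilic submanifolds. That analogy fails in an essential way: in the classical case Codazzi gives $\nabla^\perp\mathbf{H}=0$ outright, whereas here, with $\mathbf{U}^N_\Sigma\equiv 0$, Codazzi only yields the conformal-Hessian equation $m\nabla^2_\Sigma f=(\Delta_\Sigma f)\,g_\Sigma$ for a local potential $f$ of the closed one-form $\beta_\Sigma$ (equation \eqref{ConfHessEqn}). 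This equation has many non-constant solutions (e.g.\ restrictions of linear functions to a round sphere), so $df(p)=0$, and even the additional condition $\Delta_\Sigma f(p)=0$ that you correctly arrange via Proposition \ref{divJHprop}, do not by themselves propagate: the relation $\nabla_X v=\tfrac{\mu}{m}X$ with $\mu=\Delta_\Sigma f$ is not a closed system because it contains no equation for $\nabla\mu$. The missing ingredient --- which your outline never mentions --- is the Gauss equation together with the computation of $\mathrm{Ric}_\Sigma$ in terms of $\mathbf{U}^N_\Sigma$, $\mathring{\mathbf{A}}^{\hat{N}}_\Sigma$ and $\beta_\Sigma$ (Lemma \ref{RicciULem}); taking the divergence of the conformal-Hessian equation and inserting this Ricci formula produces the second, genuinely \emph{nonlinear} equation $-\nabla_\Sigma\Delta_\Sigma f=m\bigl(1+\tfrac{2}{(m+2)^2}|\nabla_\Sigma f|^2\bigr)\nabla_\Sigma f$ (equation \eqref{GradEqn}). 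Only with both equations in hand is the system for $(\nabla_\Sigma f,\Delta_\Sigma f)$ closed; the paper then concludes by showing $df$ vanishes to infinite order at $p$ and invoking Aronszajn's unique continuation for $\mu$ (an ODE-uniqueness argument along curves would also work at that stage, but the system is quasilinear, not linear). Without the Gauss/Ricci step your system does not close and the conclusion does not follow.
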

\begin{rem}
 This and related results have appeared in the literature in various forms.
  For instance, Ros-Urbano \cite{rosLagrangianSubmanifoldsn1998} characterized Whitney spheres as the Lagrangian submanifolds of $\mathbb{C}^n$ that are umbilic in an appropriate sense.  This result combined with the correspondence of Reckziegel \cite{reckziegelCorrespondenceHorizontalSubmanifolds1988} was used by Blair-Carriazo \cite{blairContactWhitneySphere} to establish an analogous result in $\Real^{2n+1}$ with its usual contact structure.    We refer also to \cite{huOptimalInequalityRelated2020}.   For the sake of completeness, we provide a self-contained proof that highlights some features that may be of independent interest.
\end{rem}

\begin{proof}
We first show that after the application of some element of $Aut_{CR}(\mathbb{S}^{2n+1})$,  $\mathbf{H}_{\Sigma}^{\hat{N}}=\mathbf{0}$.   To that end, we observe that there is a $(1,2)$ tensor field on $\Sigma$, $\mathbf{a}_\Sigma$, so that
$$
\mathbf{A}^N_\Sigma(X,Y)=J_{\mathbb{S}}(\mathbf{a}_\Sigma(X, Y))\mbox{, i.e., } \sigma_\Sigma(X,Y,Z)= g_{\mathbb{S}}(\mathbf{a}_\Sigma(X, Y), Z).
$$
For $Z$ tangent to $\Sigma$, the orthogonal splitting of the normal bundle yields
$$
(\nabla^\perp_{Z} \mathbf{A}^N_\Sigma(X,Y))^{\hat{N}}=J_{\mathbb{S}}( (\nabla^{\mathbb{S}}_Z \mathbf{a}_\Sigma(X,Y))^{\hat{N}}= J_{\mathbb{S}}( \mathbf{A}^{\hat{N}}_\Sigma(Z, \mathbf{a}_\Sigma(X,Y))).
$$
Hence, by the Codazzi equations 
\begin{align*}
(\nabla^\perp_Z &\mathbf{A}_\Sigma^{\hat{N}}(X,Y))^{\hat{N}}=
(\nabla^\perp_Z \mathbf{A}_\Sigma(X,Y)-\nabla^\perp_Z\mathbf{A}_\Sigma^{{N}}(X,Y))^{\hat{N}}\\
&=(\nabla^\perp_X \mathbf{A}_\Sigma^{\hat{N}}(Z,Y))^{\hat{N}}+J_{\mathbb{S}}( \mathbf{A}^{\hat{N}}_\Sigma(X, \mathbf{a}_\Sigma(Z,Y)))-J_{\mathbb{S}}( \mathbf{A}^{\hat{N}}_\Sigma(Z, \mathbf{a}_\Sigma(X,Y))).
\end{align*}
When $\mathring{\mathbf{A}}_\Sigma^{\hat{N}}=\mathbf{0}$ this simplifies to
$$
(\nabla^\perp_Z \mathbf{A}_\Sigma^{\hat{N}}(X,Y))^{\hat{N}}-(\nabla^\perp_X \mathbf{A}_\Sigma^{\hat{N}}(Z,Y))^{\hat{N}}=\frac{1}{m} J_{\mathbb{S}}(\mathbf{H}_\Sigma^{\hat{N}}) (\sigma_\Sigma(Z,Y, X)-\sigma_\Sigma(X, Y, Z)).
$$
The symmetry of $\sigma_\Sigma$ and the vanishing of $\mathring{\mathbf{A}}_\Sigma^{\hat{N}}$ implies, by taking the trace of $(\nabla^\perp \mathbf{A}_\Sigma^{\hat{N}})^{\hat{N}}$ and appropriate manipulation that
$$
(\nabla_X^{\perp} \mathbf{H}_\Sigma^{\hat{N}})^{\hat{N}}=\mathbf{0}.
$$ 
Thus, $X\cdot |\mathbf{H}_\Sigma^{\hat{N}}|^2=0$.  As $\Sigma$ is connected,  it suffices to ensure $\mathbf{H}_\Sigma^{\hat{N}}$ vanishes at a point. By Proposition \ref{NormalizeMCProp}, this can be arranged by applying an element of $Aut_{CR}(\mathbb{S}^{2n+1})$ and so in what follows we suppose that $\mathbf{H}^{\hat{N}}_\Sigma$ and, thus $\mathbf{A}^{\hat{N}}_\Sigma$, identically vanish.

As $\mathbf{U}_\Sigma^N$ vanishes,
$$
(m+2)\sigma_\Sigma(X,Y,Z)= g_\Sigma(X,Y) \beta_\Sigma(Z)+ g_\Sigma(Y,Z)\beta_\Sigma (X)+ g_\Sigma(X,Z)\beta_\Sigma(Y).$$
Using the Codazzi equations and vanishing of $\mathbf{A}_\Sigma^{\hat{N}}$ it follows that
\begin{align*}
(m+2) & g_\Sigma((\nabla_Y^\perp \mathbf{A}_\Sigma)(X,X), J_{\Real}(Z))=  g_\Sigma(X,Y) (\nabla_X^\Sigma\beta_\Sigma)(Z)\\
&+ g_\Sigma(Y,Z)(\nabla^\Sigma_X \beta_\Sigma) (X)+ g_\Sigma(X,Z)(\nabla_X^\Sigma \beta_\Sigma)(Y).
\end{align*}
Taking the trace and manipulating the result yields
$$
(m+2) \nabla_Y \beta_\Sigma(Z)= (\nabla_Y^\Sigma\beta_\Sigma)(Z)+ g_\Sigma(Y,Z) \sum_{i=1}^m(\nabla^\Sigma_{E_i} \beta_\Sigma) (E_i)+ (\nabla_Z^\Sigma \beta_\Sigma)(Y).
$$
As $\beta_\Sigma$ is closed, $\nabla^\Sigma \beta_\Sigma$ is symmetric and so this equation reduces to
$$
m\nabla_Y \beta_\Sigma(Z)=  g_\Sigma(Y,Z)\sum_{i=1}^m(\nabla^\Sigma_{E_i} \beta_\Sigma) (E_i).
$$
Moreover, we can locally write $\beta_\Sigma=d f$ and so
\begin{equation}\label{ConfHessEqn}
m\nabla^2_\Sigma f(Y,Z)= (\Delta_\Sigma f )g_{\Sigma}(Y,Z).
\end{equation}
That is, the Hessian of $f$ is conformal.  Observe that
$$
\Delta_\Sigma f= \mathrm{div}_\Sigma \nabla_\Sigma f= -\mathrm{div}_\Sigma(J_{\mathbb{S}}(\mathbf{H}_\Sigma^N)).
$$
By taking the divergence of both sides we obtain
$$
m\nabla_\Sigma \Delta_\Sigma f +m \mathrm{Ric}_\Sigma(\nabla_\Sigma f) = \nabla_\Sigma \Delta_\Sigma f.
$$
The fact that ${\mathbf{U}}_\Sigma^N$, $\mathbf{A}^{\hat{N}}_\Sigma$ vanish, and \eqref{GaussEqnUeqn} with Lemma \ref{RicciULem}, give
\begin{align*}
\mathrm{Ric}_{\Sigma}(X,Y)&= \left( m-1 + \frac{m}{(m+2)^2}|\mathbf{H}_\Sigma^N|^2\right) g_{\Sigma}(X,Y) +\frac{m-2}{(m+2)^2}\beta_\Sigma(X)\beta_\Sigma(Y).
\end{align*}
As $m>1$, it follows that
\begin{equation}\label{GradEqn}
-\nabla_\Sigma \Delta_\Sigma f=m(1+\frac{2}{(m+2)^2} |\nabla_\Sigma f|^2) \nabla_\Sigma f.
\end{equation}

Finally, by Proposition \ref{NormalizeMCProp}, one may assume $d f$ and $\Delta_\Sigma f$ both vanish at $p$.  It follows from \eqref{ConfHessEqn} that $df$ vanishes to first order at $p$.  In fact, differentiating \eqref{ConfHessEqn} and using \eqref{GradEqn} inductively shows that $df$ vanishes to infinite order at $p$.
Taking the divergence of \eqref{GradEqn}, setting $\mu=\Delta_\Sigma f$, and using \eqref{ConfHessEqn} yields
$$
-\Delta_\Sigma \mu = m(1+\frac{2}{(m+2)^2} |\nabla_\Sigma f|^2)\mu + \frac{4}{(m+2)^2} |\nabla_\Sigma f|^2 \mu.
$$ 
This equation is of the form
$
|\Delta_\Sigma \mu|\leq C |\mu|
$
and so has a unique continuation property -- e.g., \cite{Aronszajn}.  Hence, as $\Sigma$ is connected and $\mu$ vanishes to infinite order at $p$, $\mu$ vanishes identically.  It follows from \eqref{ConfHessEqn} that $df$ also vanishes identically resulting in the vanishing of  $\mathbf{H}_\Sigma$ from which one concludes $\Sigma$ is totally geodesic.
\end{proof}

\section{Lower bounds for CR-volume and rigidity}
\subsection{Alternate form of CR-volume}\label{CRVolSec}
Suppose that $\Sigma \subset \mathbb{S}^{2n+1}$ is an $m$-dimensional horizontal submanifold.
Recall,  elements ${\Psi}\in Aut_{CR}(\mathbb{S}^{2n+1})$ can be factored as
$
\Psi=\Psi_{A}\circ\Psi_\mathbf{b}
$
where $A\in \mathbf{U}(n+1)$, $\mathbf{b}\in  \mathbb{B}_{\mathbb{C}}^{n+1}$.  As ${\Psi}_{A}$ is an isometry of $g_{\mathbb{S}}$, \eqref{ConformalMetEqn}, and the fact that $\Sigma$ is horizontal imply that \eqref{CRVolEqn} can be expressed as:
\begin{align*}
\lambda_{CR}[\Sigma]&=\sup_{\mathbf{b}\in \mathbb{B}_{\mathbb{C}}^{n+1}} |\Psi_{\mathbf{b}}(\Sigma)|_{\mathbb{S}}=\sup_{\mathbf{b}\in \mathbb{B}_{\mathbb{C}}^{n+1}} \int_{\Sigma} W_{\mathbf{b}}^{\frac{m}{2}}(p) dV_{\Sigma}(p)\\
&=\sup_{\mathbf{b}\in \mathbb{B}_{\mathbb{C}}^{n+1}} \int_{\Sigma} \frac{(1-|\mathbf{b}|^2)^{\frac{m}{2}}}{|1+\bar{\mathbf{b}}\cdot \mathbf{z}(p)|^{m}} dV_{\Sigma}(p).
\end{align*}
This expression is useful for some computations.

\subsection{Rigidity of $\lambda_{CR}$-minimizers}
Suppose that $\Sigma\subset \mathbb{S}^{2n+1}$ is horizontal. We prove Theorem \ref{intro_thm_hori} by adapting an argument sketched by Bryant \cite{bryantSurfacesConformalGeometry1988}.   That is, by obtaining an expansion of the area of $\Psi_{\mathbf{b}}(\Sigma)$ as $\mathbf{b}$ approaches a point of $\Sigma$.

First, we record some auxiliary results.
\begin{lem}\label{SexticLem}
	Let $C^{ijk}$ be totally symmetric in $1\leq i,j,k\leq m$.  
	We have	
	$$
	\int_{\mathbb{S}^{m-1}}\left( \sum_{i,j,k=1}^m C^{ijk}\xi_i \xi_j \xi_k\right)^2 d\xi = \frac{9 |\mathbb{S}^{m-1}|_\Real}{m(m+2)(m+4)} \left( \frac{2}{3} \Vert C\Vert^2+|\mathrm{tr}(C)|^2\right)$$
	where
	$$\mathrm{tr}(C)^i=\sum_{j=1}^m C^{ijj}=\sum_{j=1}^m C^{jij}=\sum_{j=1}^m C^{jji} \mbox{ and } 	\Vert C\Vert^2=  \sum_{i,j,k=1}^m C^{ijk} C^{ijk}.
	$$
\end{lem}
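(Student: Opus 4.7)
The plan is to reduce the integral to an application of the standard moment formula for even polynomials over the sphere, then book-keep the resulting sum using the total symmetry of $C$. Expanding the square produces
\[
\int_{\mathbb{S}^{m-1}} \sum_{i,j,k,i',j',k'} C^{ijk} C^{i'j'k'} \xi_i\xi_j\xi_k\xi_{i'}\xi_{j'}\xi_{k'}\, d\xi,
\]
so the first step is to invoke the classical identity
\[
\int_{\mathbb{S}^{m-1}} \xi_{a_1}\xi_{a_2}\xi_{a_3}\xi_{a_4}\xi_{a_5}\xi_{a_6}\, d\xi = \frac{|\mathbb{S}^{m-1}|_{\mathbb{R}}}{m(m+2)(m+4)} \sum_{P} \delta_{a_{p_1}a_{p_2}}\delta_{a_{p_3}a_{p_4}}\delta_{a_{p_5}a_{p_6}},
\]
where the sum is over the $15$ perfect matchings $P$ of $\{1,\dots,6\}$. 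This is standard and can be derived, e.g., from the Funk-Hecke formula or from integration against Gaussian measure.

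Next I would partition the $15$ matchings according to how they interact with the two groups $G_1 = \{i,j,k\}$ and $G_2 = \{i',j',k'\}$. A matching contains either one or three cross-group pairs (an even number being impossible since each group has three elements). There are $3! = 6$ matchings that are entirely cross-group, and $\binom{3}{1}\binom{3}{1} = 9$ matchings with exactly one cross-group pair (with the remaining two indices paired within each group).

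The third step is to evaluate a representative matching of each type. For a fully cross-group matching defined by a bijection $\sigma$, total symmetry of $C^{ijk}$ gives
\[
\sum_{i,j,k} C^{ijk}\, C^{i_{\sigma^{-1}(1)} i_{\sigma^{-1}(2)} i_{\sigma^{-1}(3)}} = \sum_{i,j,k} (C^{ijk})^2 = \|C\|^2,
\]
so each of these $6$ matchings contributes $\|C\|^2$. For a mixed matching, after using the symmetry of $C$ to place the within-group contractions on the first two slots of each tensor, one gets
\[
\sum_{k,k'} \Bigl(\sum_i C^{iik}\Bigr)\Bigl(\sum_{i'} C^{i'i'k'}\Bigr)\delta_{kk'} = \sum_k \mathrm{tr}(C)^k\,\mathrm{tr}(C)^k = |\mathrm{tr}(C)|^2,
\]
so each of the $9$ mixed matchings contributes $|\mathrm{tr}(C)|^2$.

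Assembling the pieces yields
\[
\int_{\mathbb{S}^{m-1}} \Bigl(\sum C^{ijk}\xi_i\xi_j\xi_k\Bigr)^2 d\xi = \frac{|\mathbb{S}^{m-1}|_{\mathbb{R}}}{m(m+2)(m+4)}\bigl(6\|C\|^2 + 9|\mathrm{tr}(C)|^2\bigr),
\]
which rewrites as the claimed expression after factoring out $9$. The only mild subtlety is the enumeration and classification of the $15$ matchings; once that is done, total symmetry of $C$ collapses each class to a single value, and no further computation is needed.
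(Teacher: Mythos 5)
Your proposal is correct and follows essentially the same route as the paper: both rest on the sixth-order moment identities for the round sphere, with the paper computing the individual monomial integrals $\int \xi_i^2\xi_j^2\xi_k^2$, $\int \xi_i^4\xi_j^2$, $\int \xi_i^6$ via Gamma functions and leaving the combinatorial assembly as ``appropriate bookkeeping.'' Your use of the unified Wick-type pairing formula, together with the parity argument showing each of the $15$ matchings has either one or three cross-group pairs ($9+6=15$, contributing $9|\mathrm{tr}(C)|^2$ and $6\Vert C\Vert^2$ respectively), makes that bookkeeping explicit and checks out against the stated constant $\frac{9}{m(m+2)(m+4)}\bigl(\frac{2}{3}\Vert C\Vert^2+|\mathrm{tr}(C)|^2\bigr)$.
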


\begin{proof}
	We recall certain integral identities of homogenous polynomials -- see \cite{FollandPolynomial}.
	For $i, j$ and $k$ distinct,
	$$
	\int_{\mathbb{S}^{m-1}} \xi_i^2 \xi_j^2 \xi_k^2d \xi= \frac{2\Gamma(\frac{3}{2})^3\Gamma(\frac{1}{2})^{m-3}}{\Gamma(3+\frac{m}{2})}=\frac{1}{m(m+2)(m+4)} |\mathbb{S}^{m-1}|_\Real.
	$$
	For $i$ and $j$ distinct,
	$$
\int_{\mathbb{S}^{m-1}} \xi_i^4 \xi_j^2 d \xi= \frac{2\Gamma(\frac{5}{2})\Gamma(\frac{3}{2})\Gamma(\frac{1}{2})^{m-2}}{\Gamma(3+\frac{m}{2})}=\frac{3}{m(m+2)(m+4)} |\mathbb{S}^{m-1}|_\Real.
	$$
Finally, 
	$$
\int_{\mathbb{S}^{m-1}} \xi_i^6 d \xi= \frac{2\Gamma(\frac{7}{2})\Gamma(\frac{1}{2})^{m-1}}{\Gamma(3+\frac{m}{2})}=\frac{15}{m(m+2)(m+4)} |\mathbb{S}^{m-1}|_\Real,
$$
and for any other homogeneous degree 6 polynomial, $H$,
$$
\int_{\mathbb{S}^{m-1}} H(\xi) d\xi=0.
$$
The result follows by appropriate bookkeeping.
\end{proof}

Using these results we obtain an asymptotic expression for degenerating CR-deformations of a horizontal submanifold.
\begin{prop}\label{AsympProp}
	Let $\Sigma\subset \mathbb{S}^{2n+1}$ be an $m$-dimensional horizontal submanifold that is properly embedded in a neighborhood of $p\in \Sigma$.  For $\Psi_{-(1-t)\mathbf{X}(p)}\in Aut_{CR}(\mathbb{S}^{2n+1})$,  the following asymptotic behavior holds as $t\to 0^+$, 
	$$
 |\Psi_{-(1-t)\mathbf{X}(p)}(\Sigma)|_{\mathbb{S}}=|\mathbb{S}^m|_{\Real}+ \frac{1}{4}|\mathbb{S}^m|_{\Real}  \alpha_m^\Sigma(p) \left\{\begin{array}{cc} 
t (-\log t)+O(t) & m=2\\
	\frac{2}{m-2}t +O(t^{\frac{3}{2}}) & m\geq 3\end{array}
	\right.
	$$
	where
\begin{align*}
 \alpha_m^\Sigma&=	\frac{3m+2}{3m+3} |\mathbf{U}_\Sigma^N|^2+|\mathring{\mathbf{A}}_\Sigma^{\hat{N}}|^2  -\frac{m-2}{2m}\left(\frac{m^2}{(m+2)(m+1)} |\mathbf{H}_\Sigma^N|^2+ |\mathbf{H}_\Sigma^{\hat{N}}|^2\right).
\end{align*}
\end{prop}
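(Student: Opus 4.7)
The plan is to Taylor expand the integrand in
$|\Psi_{\mathbf{b}}(\Sigma)|_{\mathbb{S}} = \int_\Sigma W_{\mathbf{b}}^{m/2}\, dV_\Sigma$
from Section \ref{CRVolSec} near $p$, rescale by $\sqrt{t}$ to blow up the support, and separately extract the leading and subleading terms in $t$. First I would introduce Riemann normal coordinates $x \in T_p\Sigma$ adapted to $p$, and Taylor expand the embedding $\mathbf{Z}\colon T_p \Sigma \to \Real^{2n+2}$ through third order:
\[
\mathbf{Z}(x) = p + x - \tfrac{|x|^2}{2}p + \tfrac{1}{2}\mathbf{A}^{\mathbb{S}}_\Sigma(x,x) + \tfrac{1}{6}\mathbf{C}(x,x,x) + O(|x|^4),
\]
with volume element $dV_\Sigma = \bigl(1 - \tfrac{1}{6}\mathrm{Ric}_\Sigma(x,x) + O(|x|^3)\bigr)\,dx$. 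Because $\Sigma$ is horizontal, $J_{\Real}(p) \perp T_p\Sigma$, and for $\mathbf{b} = -(1-t)p$ the two pairings appearing in $W_\mathbf{b}$ expand as $g_\Real(\mathbf{b}, \mathbf{Z}(x)) = -(1-t)\bigl(1 - \tfrac{|x|^2}{2} + p\cdot \mathbf{C}(x,x,x) + \cdots\bigr)$ and $g_\Real(J_\Real(\mathbf{b}), \mathbf{Z}(x)) = -\tfrac{1-t}{2}J_\Real(p)\cdot\mathbf{A}^{\mathbb{S}}_\Sigma(x,x) + \cdots$. To leading order $W_{\mathbf{b}}^{m/2}$ behaves like $(2t)^{m/2}/(t + |x|^2/2)^m$; substituting $x = \sqrt{t}\,\xi$ produces $2^{m/2}(1 + |\xi|^2/2)^{-m}\,d\xi$, whose integral over $\Real^m$ equals $|\mathbb{S}^m|_\Real$ by stereographic projection.

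Next I would compute the order-$t$ correction by expanding $W_\mathbf{b}^{m/2}$ and $dV_\Sigma$ one order beyond leading and organizing by powers of $t$. After the $\sqrt{t}$-rescaling, order-$t$ contributions come from: (a) the quadratic $\mathbf{A}^{\mathbb{S}}_\Sigma(x,x)$ piece inside $|1+\bar{\mathbf{b}}\cdot \mathbf{z}|^{-2m}$; (b) the cubic-in-$x$ cross term $|x|^2\cdot p\cdot\mathbf{C}(x,x,x)$ together with the square of the imaginary part $J_\Real(p)\cdot\mathbf{A}^{\mathbb{S}}_\Sigma(x,x)$; and (c) the Ricci correction in $dV_\Sigma$. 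Odd-parity pieces in $\xi$ integrate to zero by symmetry, and the surviving even-parity integrals reduce to sextic angular averages of $\sigma_\Sigma$ and quartic averages of $\mathbf{A}^{\hat{N}}_\Sigma$ on $\mathbb{S}^{m-1}$. Lemma \ref{SexticLem} evaluates the sextic moments in terms of $\|\sigma_\Sigma\|^2$ and $|\mathrm{tr}(\sigma_\Sigma)|^2 = |\beta_\Sigma|^2$, after which the identity $\mathrm{tr}_\Sigma \mathbf{E}^N_\Sigma = (m+2)\mathbf{H}^N_\Sigma$ together with Lemma \ref{RicciULem} convert the ``raw'' $|\mathbf{A}|^2$ combinations and the Ricci contribution into the CR-covariant form $|\mathbf{U}^N_\Sigma|^2$, $|\mathring{\mathbf{A}}^{\hat{N}}_\Sigma|^2$ plus residual $|\mathbf{H}^N_\Sigma|^2$, $|\mathbf{H}^{\hat{N}}_\Sigma|^2$ terms with the exact rational coefficients appearing in $\alpha_m^\Sigma$. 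The radial integrals of the form $\int_0^{\epsilon/\sqrt{t}} r^{m+2k-1}(1+r^2/2)^{-m-k}\,dr$ produce $\tfrac{2}{m-2}t + O(t^{3/2})$ for $m \geq 3$, and $\tfrac{1}{2}t(-\log t) + O(t)$ in the borderline case $m=2$ where the intermediate-scale integral $\int r^{-1}\,dr$ diverges logarithmically.

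The principal obstacle is the consistent bookkeeping of quadratic and cubic contributions at order $t$ after the $\sqrt{t}$-rescaling: both the square of the quadratic term $\mathbf{A}^{\mathbb{S}}_\Sigma(x,x)$ and the linear-times-cubic combination $|x|^2 \cdot p\cdot\mathbf{C}(x,x,x)$ enter at the same order in $t$, so the Taylor expansion of $|1+\bar{\mathbf{b}}\cdot \mathbf{z}|^2$ must be maintained consistently through $O(|x|^4)$ in every piece of the integrand. Recognizing the resulting angular moments as precisely the combinations $|\mathbf{U}^N_\Sigma|^2$ and $|\mathring{\mathbf{A}}^{\hat{N}}_\Sigma|^2$ (modulo $\mathbf{H}$-residuals) relies critically on combining Lemma \ref{SexticLem} with Lemma \ref{RicciULem}. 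Proper embeddedness of $\Sigma$ near $p$ ensures that the contribution from outside a fixed small neighborhood is only $O(t^{m/2})$, which is absorbed into the error terms of the stated expansion.
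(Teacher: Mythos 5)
Your plan is sound and reaches the same answer, but it takes a genuinely different computational route from the paper. Both arguments start from $|\Psi_{\mathbf b}(\Sigma)|_{\mathbb S}=\int_\Sigma W_{\mathbf b}^{m/2}\,dV_\Sigma$ and both split the integrand into a ``real part'' piece governed by $1-(1-t)x_1$ and a ``$y_1^2$'' correction whose angular average is evaluated by Lemma \ref{SexticLem}; but where you propose a direct fourth-order Taylor expansion of the embedding in normal coordinates followed by the parabolic rescaling $x=\sqrt t\,\xi$ and explicit moment integrals over $\Real^m$, the paper instead writes the main term via the co-area formula as a one-dimensional integral against the extrinsic-ball volume function $|\Sigma(s)|_\Real$, imports the Karp--Pinsky expansion $|\Sigma(s)|_\Real=A_m(1-s)^{m/2}+B_m^\Sigma(1-s)^{m/2+1}+\cdots$ (with $B_m^\Sigma\sim 2|\mathbf A_\Sigma|^2-|\mathbf H_\Sigma|^2$) to avoid redoing the quartic bookkeeping, and pins down the universal constants $C_{k,l}$ by testing the whole formula on the totally geodesic sphere rather than by evaluating the rescaled integrals directly. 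Your route buys self-containedness and makes the origin of each term transparent; the paper's route buys a large reduction in bookkeeping, since exactly the step you flag as the ``principal obstacle''--- maintaining the expansion of $|1+\bar{\mathbf b}\cdot\mathbf z|^2$ consistently through $O(|x|^4)$ together with the Ricci correction to $dV_\Sigma$ --- is precisely what the Karp--Pinsky formula packages for you. One correction to your setup: the quadratic term you write for the imaginary part, $J_{\Real}(p)\cdot\mathbf A^{\mathbb S}_\Sigma(x,x)$, vanishes identically, since $J_{\Real}(p)=-\hat{\mathbf T}(p)$ and Proposition \ref{CurvatureFactsProp}(1) gives $g_{\mathbb S}(\mathbf A^{\mathbb S}_\Sigma,\hat{\mathbf T})=0$ for horizontal $\Sigma$; the imaginary part is genuinely \emph{cubic}, $y_1=C^{ijk}x_ix_jx_k+O(|x|^4)$ with $C^{ijk}$ built from $\sigma_\Sigma$, and it is the square of this cubic term --- not of a quadratic one --- that produces the sextic moments handled by Lemma \ref{SexticLem} and that keeps the $y_1^2$ correction at order $t$ (this is also the source of the paper's estimate $|y_1|^2\le C(1-x_1)^3$ controlling the error). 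With that adjustment, and with \eqref{GaussEqnUeqn}/\eqref{UAHeqn} used to repackage the raw $|\mathbf A|^2$, $\mathrm{Ric}_\Sigma$, and moment combinations into $|\mathbf U^N_\Sigma|^2$, $|\mathring{\mathbf A}^{\hat N}_\Sigma|^2$, and the residual mean-curvature terms, your outline carries through.
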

\begin{proof}	
   Up to rotation by an element of $\mathbf{U}(n+1)$,  we may suppose $\mathbf{X}(p)=\mathbf{e}_1$ and that  $T_p\Sigma$ is spanned by $\set{\mathbf{e}_2, \ldots, \mathbf{e}_{m+1}}$.  
   Observe that for $t\in (-1,1)$,
   $$
   |\Psi_{-(1-t)\mathbf{e}_1}(\Sigma)|_{\mathbb{S}}=\int_{\Sigma} \frac{(1-(1-t)^2)^{\frac{m}{2}}}{((1-(1-t) x_1)^2+(1-t)^2 y_1^2)^{\frac{m}{2}}} dV_\Sigma.
   $$
   Moreover, the hypothesis that $\Sigma$ is properly embedded near $p$ means for small enough $\epsilon>0$,  we may  pick coordinates $$\mathbf{w}=(w_1, \ldots, w_m)\in B_{2\epsilon}(\mathbf{0})\subset \Real^m$$
    and functions $u_1, u_2$ and ${z}_1 \ldots, z_{2n-m}$ so that 
    $$
    \set{(1+u_1(\mathbf{w}), \mathbf{w}, z_1(\mathbf{w}), \ldots,z_{n-m}(\mathbf{w}), u_2(\mathbf{w}), z_{n+1-m}, \ldots } \subset \Sigma
    $$ 
	where at $\mathbf{0}\in \Real^m$, $u_1=u_2=z_1=\ldots= z_{2n-m}=0$ and $\nabla u_1=\nabla u_2=\nabla z_1=\ldots =\nabla z_{2n-m}=0$.
	Using 
	$$
	\nabla^2_\Sigma x_1 (p) = \mathbf{e}_1\cdot \mathbf{A}_{\Sigma}^{\Real}(p)=-g_{\Sigma} (p)
	$$
	we see that
	$$
	u_1(\mathbf{w}) =-\frac{1}{2}|\mathbf{w}|^2+O(|\mathbf{w}|^3).
	$$
	Likewise, as $\Sigma$ is horizontal, 
	$$
	\nabla^2_\Sigma y_1(p)=\mathbf{T}\cdot \mathbf{A}_{\Sigma}^{\Real} (p)= 0.
	$$
 This means 
	$$
	u_2(\mathbf{w})=C^{ijk} w_i w_j w_k+O(|\mathbf{w}|^4) \mbox{ where }
	C^{ijk}=\frac{1}{6}\sigma_\Sigma|_{p}(\mathbf{e}_{i+1}, \mathbf{e}_{j+1}, \mathbf{e}_{k+1}).
	$$
	Hence, for $|\mathbf{w}|$ small, one has the estimate
	$
	|u_2|^2 \leq C |u_1|^3
	$
	for some constant $C$.  
	 That is,  for $q\in \Sigma$ sufficiently close to $p$, one has 
	$$
	|y_1(q)|^2\leq C (1-x_1(q))^3.
	$$
	 As $t>0$ and $x_1(q)\geq 0$ for $q$ near enough to $p$, one has
	$$
	0\leq 1-x_1(q)<1-(1-t) x_1(q), \mbox{ so }	y_1^2(q)\leq C(1-(1-t) x_1(q))^3.
	$$

	Fix an $\epsilon\in (0,1)$. The preceding estimates ensure that for $t>0$ small,
	\begin{align*}
	\left|	|\Psi_{-(1-t)\mathbf{e}_1}(\Sigma)|_{\mathbb{S}}-	F_\epsilon^m(t)-G_{\epsilon}^m(t)\right|\leq C_m t F_{\epsilon}^{m-2}(t)+ O(t^{\frac{m}{2}}; \epsilon)
	\end{align*}
	where
	$$
	F_\epsilon^l(t)=\int_{\Sigma\cap \set{x_1\geq \epsilon}} \frac{(1-(1-t)^2)^{\frac{l}{2}}}{(1-(1-t)x_1)^l} dV_{\Sigma} \mbox{ and }
	$$
$$
G_\epsilon^l(t)=-\frac{l}{2}(1-t)^2 \int_{\Sigma\cap \set{x_1\geq \epsilon}} \frac{ y_1^2 (1-(1-t)^2)^{\frac{l}{2}}}{(1-(1-t)x_1)^{l+2}} dV_{\Sigma}.
$$
Note that we also have
$$
|G_\epsilon^l(t)|\leq C_{l, \Sigma} t^{\frac{1}{2}} F_{\epsilon}^{l-1}(t).
$$	

As $\Sigma\subset \mathbb{S}^{2n+1}$ and $\mathbf{X}(p)=\mathbf{e}_1$, 
$$
\Sigma(s)=\Sigma\cap	\set{x_1\geq s}=  \bar{B}_{\sqrt{2(1-s)}}^\Real(p)\cap \Sigma.
$$
Using the co-area formula twice and integrating by parts we obtain
	\begin{align*}
		F_{\epsilon}^l(t) &= t^{\frac{l}{2}} (2-t)^{\frac{l}{2}} \int_{\epsilon}^1  \int_{\Sigma\cap \set{x_1=s}} \frac{1}{(1-(1-t) s)^l}  \frac{1}{|\nabla_\Sigma x_1|} dV_s ds \\
		&=t^{\frac{l}{2}} (2-t)^{\frac{l}{2}} \int_{\epsilon}^1  \frac{1}{(1-(1-t) s)^l} \left(-\frac{d}{ds} |\Sigma(s)|_{\Real}\right) ds\\
		&=	t^{\frac{l}{2}} (2-t)^{\frac{l}{2}}  \left(\frac{ |\Sigma (\epsilon)|_\Real}{(1-(1-t) \epsilon)^l} + l (1-t)\int_\epsilon^1 \frac{|\Sigma(s)|_\Real}{{(1-(1-t) s)^{l+1}}}ds\right).
	\end{align*}
From this we conclude that $F_{\epsilon}^l(t) =\hat{F}_\epsilon^l(t)+O(t^{\frac{l}{2}}; \epsilon)$ where 
\begin{align*}
\hat{F}_\epsilon^l(t) = 	t^{\frac{l}{2}} (2-t)^{\frac{l}{2}} (1-t)l\int_\epsilon^1 \frac{|\Sigma(s)|_\Real}{{(1-(1-t) s)^{l+1}}}ds.
\end{align*}
A computation of Karp and Pinksy \cite{karpVolumeSmallExtrinsic1989}, see also \cite[Appendix A]{Bernstein2023rigidity}, yields
	\begin{align*}
	 | \Sigma(s)|_\Real= A_m (1-s)^{\frac{m}{2}}+B_{m}^\Sigma (1-s)^{\frac{m}{2}+1}+O((1-s)^{\frac{m}{2}+\frac{3}{2}}),
	\end{align*}
	where denoting $\omega_m$ as the volume of the unit ball in $\Real^m$, 
\begin{align*}
	A_m 	&= 2^{\frac{m}{2}}  \frac{1}{m}|\mathbb{S}^{m-1}|_\Real=2^{\frac{m}{2}}  \omega_m, \\
	B_{m }^\Sigma&= 2^{\frac{m+2}{2}}\frac{ 1}{8m(m+2)}  |\mathbb{S}^{m-1}|_\Real \left(2|\mathbf{A}_\Sigma^\Real(p)|^2-|\mathbf{H}_\Sigma^\Real(p)|^2\right)\\
	&=  \frac{1 }{4(m+2)}A_m  \left(2|\mathbf{A}_\Sigma^{\mathbb{S}}(p)|^2-|\mathbf{H}_\Sigma^{\mathbb{S}}(p)|^2+2m-m^2\right).
\end{align*}
Thus, $\hat{F}_\epsilon^l(t) $ has an expansion using the functions from Lemma \ref{horizontalWithtLem} of the form,
\begin{align*}
	 t^{\frac{l}{2}}\big(2-t \big)^{\frac{l}{2}} (1-t) l \left(  A_m I_{l+1,m+2}(t; \epsilon)+ B_{m}^\Sigma I_{l+1, m+4} (t; \epsilon)+O(I_{l+1, m+5}(t; \epsilon)\right).
\end{align*}
For $2l\geq m+1$, Lemma \ref{horizontalWithtLem} implies
the leading order behavior
$$
	\hat{F}_\epsilon^l(t)=2^{\frac{l+m}{2}}\frac{l}{m} |\mathbb{S}^{m-1}|_{\Real} 
C_{l+1, m+2} t^{-\frac{l}{2}+\frac{m}{2}}+O(t^{1-\frac{l}{2}+\frac{m}{2}}).
$$
Lemma \ref{horizontalWithtLem}  also yields the following required special cases not covered by this
\begin{align*}
\hat{F}_\epsilon^l(t)=\left\{ \begin{array}{cc}  O(1; \epsilon)  &\mbox{$m=2$ and $0\leq l \leq 1$}\\
O(t^{\frac{1}{2}}; \epsilon) &\mbox{$m=3$ and $l=1$}. \end{array} \right.
\end{align*}
It follows that, for all $m\geq 2$, 	$|\Psi_{-(1-t)\mathbf{e}_1}(\Sigma)|_{\mathbb{S}}=\hat{F}_\epsilon^m(t)+O(t^{\frac{1}{2}}; \epsilon).$
When $\Sigma$ is a totally geodesic horizontal sphere $	|\Psi_{-(1-t)\mathbf{e}_1}(\Sigma)|_{\mathbb{S}}=|\mathbb{S}^m|_\Real$, for $l=m\geq 2$, 
\begin{equation}\label{Cm1m2Eqn}
|\mathbb{S}^m|_\Real=\hat{F}_\epsilon^m(0)= 2^m|\mathbb{S}^{m-1}|_{\Real} C_{m+1,m+2}, \mbox{ i.e., }C_{m+1,m+2}=2^{-m} \frac{|\mathbb{S}^m|_\Real}{|\mathbb{S}^{m-1}|_\Real}
\end{equation}
and so, for general $\Sigma$, $\hat{F}_\epsilon^m(0)=|\mathbb{S}^m|_\Real$.  \
 Another consequence is that when $m\geq 2$, 
$$
|\Psi_{-(1-t)\mathbf{e}_1}(\Sigma)|_{\mathbb{S}}=|\mathbb{S}^m|_\Real+\hat{\alpha}_m^\Sigma t +\left\{ \begin{array}{cc} O(t) & m=2\\ O(t^{\frac{3}{2}}) & m \geq 3 \end{array}\right.
$$
where $\hat{\alpha}_m^\Sigma$ is computed from the leading order term of $G^m_\epsilon (t)$ and the subleading order term of $\hat{F}_\epsilon(t)$.

Let us now suppose $l=m\geq 3$. 
In this case, we have
\begin{align*}
 &t^{\frac{m}{2}}\big(2-t \big)^{\frac{m}{2}} (1-t)m= 2^{\frac{m}{2}}m (1-\frac{m+4}{4} t)+O(t^2)\\
 &  A_m I_{m+1,m+2}(t; \epsilon)= A_{m} C_{m+1, m+2} t^{-\frac{m}{2}}+ A_m \frac{m+2}{2} C_{m+1, m+2}  t^{1-l+\frac{m}{2}}+O( t^{2-\frac{m}{2}})\\
 &B_{m}^\Sigma I_{m+1, m+4}(t; \epsilon)=B_{m}^\Sigma C_{m+1, m+4} t^{1-l+\frac{m}{2}}+O( t^{2-\frac{m}{2}}).
\end{align*}
Hence, when $m\geq 3$, the next term in the expansion of $\hat{F}^{m}_\epsilon(t)$ has coefficient
\begin{align*}
 2^{\frac{m}{2}}& m\big( 	\frac{m+2}{2}A_m C_{m+1, m+2} 	+ B_{m}^\Sigma C_{m+1,m+4}-\frac{m+4}{4} A_m  C_{m+1, m+2}\big)\\
 &=  |\mathbb{S}^m|_\Real \big(\frac{m}{4}+ \frac{1}{4(m+2)}(2|\mathbf{A}_\Sigma^{\mathbb{S}}|^2-|\mathbf{H}_\Sigma^{\mathbb{S}}|^2+2m-m^2)\frac{C_{m+1, m+4}}{C_{m+1, m+2}}\big) 	.
 	\end{align*}
When $\Sigma$ is the totally geodesic horizontal sphere, $y_1$ is identically zero, so $G^m_{\epsilon}(t)=0$, and hence this term vanishes.  Moreover,  $2|\mathbf{A}_\Sigma^{\mathbb{S}}|^2-|\mathbf{H}_\Sigma^{\mathbb{S}}|^2=0$  and so
\begin{align}\label{cm1m4eqn}
\frac{C_{m+1, m+4}}{C_{m+1, m+2}}= \frac{ m+2}{m-2}.
\end{align}
Hence, when $m\geq 3$
the expansion to subleading order is given by
$$
\hat{F}_\epsilon^m(t)=|\mathbb{S}^m|_\Real +\frac{|\mathbb{S}^{m}|_\Real }{4(m-2)}  \left(2 |\mathbf{A}_{\Sigma}^{\mathbb{S}}|^2-|\mathbf{H}_\Sigma^{\mathbb{S}}|^2\right)t +O(t^{\frac{3}{2}}).
$$

Next, we treat the term $G_\epsilon^m$ for $m\geq 2$.  To that end, we observe that the co-area formula implies
\begin{align*}
G_\epsilon^m(t) &= -\frac{m}{2}(1-t)^2  t^{\frac{m}{2}} (2-t)^{\frac{m}{2}}\int_{\epsilon}^1  \int_{\Sigma\cap \set{x_1=s}} \frac{y_1^2}{(1-(1-t) s)^{m+2}}  \frac{1}{|\nabla_\Sigma x_1|} dV_s ds \\
&=-\frac{m}{2}(1-t)^2  t^{\frac{m}{2}} (2-t)^{\frac{m}{2}}\int_{\epsilon}^1   \frac{H(s)}{(1-(1-t) s)^{m+2}}  ds 
\end{align*}
where
$$
H(s)= \int_{\Sigma\cap \set{x_1=s}}  \frac{y_1^2}{|\nabla_\Sigma x_1|} dV_s.
$$
It follows from Lemma \ref{SexticLem} that
$$
H(s)= (2(1-s))^{\frac{m+4}{2}} \frac{|\mathbb{S}^{m-1}|_\Real}{4m(m+2)(m+4)}\big(\frac{2}{3}|\mathbf{A}_\Sigma^N|^2+|\mathbf{H}_\Sigma^N|^2\big)+O((1-s)^{\frac{m+5}{2}})
$$
and so by Lemma \ref{horizontalWithtLem} and the fact that for $m\geq 2$, $I_{m+2, m+7}(t; \epsilon)=O(t^{\frac{3}{2}-\frac{m}{2}})$,
$$
	G_\epsilon^m(t) = -2^{m-1} t^{\frac{m}{2}} \frac{|\mathbb{S}^{m-1}|_{\Real}}{(m+2)(m+4)}\big(\frac{2}{3}|\mathbf{A}_\Sigma^N|^2+|\mathbf{H}_\Sigma^N|^2\big) I_{m+2, m+6}(t; \epsilon)+O(t^{\frac{3}{2}}).
$$			
 
Hence, by Lemma \ref{horizontalWithtLem}, when $m\geq 3$, one has
\begin{align*}
	G_\epsilon^m(t) &= -2^{m-1}\frac{|\mathbb{S}^{m-1}|_\Real}{(m+2)(m+4)}\big(\frac{2}{3}|\mathbf{A}_\Sigma^N|^2+|\mathbf{H}_\Sigma^N|^2\big) C_{m+2, m+6} t+O(t^{\frac{3}{2}}).
\end{align*}
When $m\geq 3$, Lemma \ref{horizontalWithtLem}, \eqref{Cm1m2Eqn}, and \eqref{cm1m4eqn} yield: 
\begin{align*}
C_{m+2,m+6}&=C_{m+1, m+4}-C_{m+2, m+4}=\frac{m+2}{m-2} C_{m+1, m+2}-\frac{m+2}{2(m+1)}C_{m+1, m+2}\\
&=\frac{ (m+2)(m+4)}{2(m-2)(m+1)} C_{m+1, m+2}= 2^{-m-1} \frac{ (m+2)(m+4)}{(m-2)(m+1)} \frac{|\mathbb{S}^m|_\Real}{|\mathbb{S}^{m-1}|_{\Real}}.
\end{align*}
Hence, for $m\geq 3$, 
\begin{align*}
	G_\epsilon^m(t) &=- \frac{|\mathbb{S}^{m}|_\Real}{4(m-2)(m+1)}\big(\frac{2}{3}|\mathbf{A}_\Sigma^N|^2+|\mathbf{H}_\Sigma^N|^2\big) t+O(t^{\frac{3}{2}}),
\end{align*}
and so combining the leading order and subleading order terms gives
\begin{align*}
&\frac{2(m-2)}{|\mathbb{S}^m|_\Real}\hat{\alpha}_m^\Sigma=2\left(-\frac{1}{3(m+1)} \left( 2|\mathbf{A}_\Sigma^N|^2+ 3|\mathbf{H}_\Sigma^N|^2\right)+ 2|{\mathbf{A}}^{\mathbb{S}}_\Sigma|^2-|\mathbf{H}_\Sigma^{\mathbb{S}}|^2\right)\\
&=\left(\frac{3m+2}{3m+3} |\mathbf{U}_\Sigma^N|^2+|\mathring{\mathbf{A}}_\Sigma^{\hat{N}}|^2-\frac{ m(m-2)}{2(m+1)(m+2)} |\mathbf{H}_\Sigma^N|^2-\frac{m-2}{2m} |\mathbf{H}_\Sigma^{\hat{N}}|^2\right)
\end{align*}
from which we deduce the claimed $\alpha_m^\Sigma$. Here we used 
\begin{equation} \label{UAHeqn}
|\mathbf{U}_\Sigma^N|^2=\mathrm{tr}_\Sigma(\mathbf{U}_\Sigma^N\odot \mathbf{U}_\Sigma^N) =|\mathbf{A}_\Sigma^N|^2-\frac{3}{m+2} |\mathbf{H}_\Sigma^N|^2.
\end{equation}

Finally, let us treat the expansion in the $m=2$ case. By Lemma \ref{horizontalWithtLem}, 
$$
I_{3,6}(t;\epsilon)=I_{4,8}(t; \epsilon)=-\log t+O(1; \epsilon).
$$
It follows that
\begin{align*}
	\hat{F}_\epsilon^2(t)&=-4 t \log t B_m^\Sigma+O(t; \epsilon)\\
	&=|\mathbb{S}^2|_\Real+\frac{|\mathbb{S}^2|_\Real}{8}  \left(2|\mathbf{A}_\Sigma^{\mathbb{S}}|^2-|\mathbf{H}_\Sigma^{\mathbb{S}}|^2\right) t(-\log t)+O(t; \epsilon);
\end{align*}
\begin{align*}
	G_\epsilon^2(t)&=\frac{|\mathbb{S}^1|_{\Real}}{12} \big(\frac{2}{3}|\mathbf{A}_\Sigma^N|^2+|\mathbf{H}_\Sigma^N|^2\big)t \log t+O(t^{\frac{3}{2}}; \epsilon)\\
	&=-\frac{|\mathbb{S}^2|_{\Real}}{72} (2|\mathbf{A}_\Sigma^N|^2+3|\mathbf{H}_\Sigma^N|^2)t (-\log t)+O(t^{\frac{3}{2}}; \epsilon).
\end{align*}
Adding these and using \eqref{UAHeqn} yields
$$
|\Psi_{-(1-t)\mathbf{e}_1}(\Sigma)|_{\mathbb{S}}=|\mathbb{S}^2|_{\Real}+\frac{1}{4}|\mathbb{S}^2|_{\Real}\left( \frac{8}{9} |\mathbf{U}_\Sigma^N|^2 +  |\mathring{\mathbf{A}}_\Sigma^{\hat{N}}|^2 \right) t (-\log t)+O(t).
$$
\end{proof}

\begin{lem}\label{AntipodLem}
	Let $p_\pm\in \mathbb{S}^{2n+1}$ be two distinct points.  There is an element $\Psi \in Aut_{CR}(\mathbb{S}^{2n+1})$ such that $\Psi(p_-)$ and $\Psi(p_+)$ are antipodal.
\end{lem}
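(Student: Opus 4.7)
The plan is to deduce the lemma from the sharper statement that $\mathrm{Aut}_{CR}(\mathbb{S}^{2n+1})$ acts $2$-transitively on $\mathbb{S}^{2n+1}$; once that is in hand, one just takes $\Psi$ mapping $(p_+,p_-)$ to the specific antipodal pair $(\mathbf{e}_1, -\mathbf{e}_1)$.

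First I would use the rotational part $\mathbf{U}(n+1) \subset \mathrm{Aut}_{CR}(\mathbb{S}^{2n+1})$, which acts transitively on $\mathbb{S}^{2n+1}$ via $A \mapsto \Psi_A$: precomposing with a suitable $\Psi_A$ we may assume $p_+ = \mathbf{e}_1$, and a further rotation by a block-diagonal element $\mathrm{diag}(1,B)$ with $B \in \mathbf{U}(n)$, which fixes $\mathbf{e}_1$, lets us write $p_- = (\alpha,\beta,0,\ldots,0)$ with $\beta \geq 0$. Inspection of the formula in Subsection \ref{backgroundCR} shows that $\Psi_\mathbf{b}$ preserves the complex $2$-plane $V = \mathrm{span}_\mathbb{C}(\mathbf{e}_1,\mathbf{e}_2)$ whenever $\mathbf{b} \in V \cap \mathbb{B}^{n+1}_\mathbb{C}$, so everything reduces to producing a CR-automorphism of $\mathbb{S}^3 = V \cap \mathbb{S}^{2n+1}$ fixing $\mathbf{e}_1$ and sending $p_-$ to $-\mathbf{e}_1$.

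The main step is then the transitivity of the parabolic stabilizer of $\mathbf{e}_1$ in $\mathrm{Aut}_{CR}(\mathbb{S}^3)$ on $\mathbb{S}^3 \setminus \set{\mathbf{e}_1}$. Via the Cayley transform identifying $\mathbb{B}^2_\mathbb{C}$ with the Siegel half-space $D = \set{(w_0,w_1)\in \mathbb{C}^2:\mathrm{Im}(w_0) > |w_1|^2}$, the point $\mathbf{e}_1$ corresponds to $\infty$ and $-\mathbf{e}_1$ to the origin of $\partial D$, which is naturally identified with the Heisenberg group $H^1 = \mathbb{C}\times \mathbb{R}$. The stabilizer of $\infty$ in the biholomorphism group of $D$ contains the Heisenberg translations, which act simply transitively on $\partial D \cong H^1$; pulling back yields the required CR-automorphism.

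The chief obstacle is to exhibit this Heisenberg translation concretely as a product $\Psi_A \circ \Psi_\mathbf{b}$ of the type \eqref{StructureAutCEqn}. This can be bypassed by a direct parametric approach: take $\mathbf{b} = b_1\mathbf{e}_1 + b_2\mathbf{e}_2$ with $|b_1|^2 + |b_2|^2 < 1$, choose the unique $A \in \mathbf{U}(n+1)$ with $A\cdot \Psi_\mathbf{b}(\mathbf{e}_1) = \mathbf{e}_1$, and solve the resulting system $\Psi_A \circ \Psi_\mathbf{b}(p_-) = -\mathbf{e}_1$ for $(b_1,b_2)$; the abstract Heisenberg argument above guarantees that this system of algebraic equations has a solution. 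Alternatively, one can use the boundary-degeneration behaviour \eqref{ConvergencePsibEqn} and the subsequent remark: taking $\mathbf{b} \to -\mathbf{e}_1$ along a direction $\mathbf{v}$ chosen so that $\Psi_\mathbf{b}(\mathbf{e}_1) \to \mathbf{e}_1$ while $\Psi_\mathbf{b}(p_-) \to -\mathbf{e}_1$ gives pairs arbitrarily close to antipodal, which is then upgraded to exact equality by an open-mapping argument for the diagonal action on pairs of distinct points.
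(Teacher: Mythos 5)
Your argument is correct in its main line, but it takes a genuinely different route from the paper. The paper's proof is a three-line piece of complex hyperbolic geometry: it takes the Bergman-metric geodesic $\gamma$ in $\mathbb{B}^{2n+2}$ with ideal endpoints $p_\pm$, uses transitivity of the isometry group to move $\gamma(0)$ to the origin, and observes that geodesics through the origin are Euclidean diameters, so their ideal endpoints are antipodal. You instead prove the stronger statement that $Aut_{CR}(\mathbb{S}^{2n+1})$ is transitive on ordered pairs of distinct points, by reducing to a complex $2$-plane via $\mathbf{U}(n+1)$ and then invoking the simply transitive action of Heisenberg translations on $\partial D\setminus\set{\infty}$ in the Siegel model. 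That is a correct and standard argument (it is in Goldman's book, which the paper cites), and the reduction to $V=\mathrm{span}_{\mathbb{C}}(\mathbf{e}_1,\mathbf{e}_2)$ via the explicit formula for $\Phi_{\mathbf{b}}$ is sound; what it buys is the sharper two-point transitivity, at the cost of more machinery. Two remarks. First, the ``chief obstacle'' you identify is not actually an obstacle: by \eqref{StructureAutCEqn} (applied in $\mathbb{B}^2_{\mathbb{C}}$) the pulled-back Heisenberg translation is automatically of the form $\Phi_{A'}\circ\Phi_{\mathbf{b}'}$ with $A'\in\mathbf{U}(2)$, $\mathbf{b}'\in\mathbb{B}^2_{\mathbb{C}}$, and $\Phi_{A'\oplus I}\circ\Phi_{(\mathbf{b}',0)}$ is the required extension to $\mathbb{B}^{n+1}_{\mathbb{C}}$; no system needs to be solved, and note there is no \emph{unique} $A$ with $A\cdot\Psi_{\mathbf{b}}(\mathbf{e}_1)=\mathbf{e}_1$, since the stabilizer of $\mathbf{e}_1$ in $\mathbf{U}(n+1)$ is large. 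Second, the ``alternative'' at the end via \eqref{ConvergencePsibEqn} plus an unspecified open-mapping argument is the weak point of the write-up --- turning ``arbitrarily close to antipodal'' into ``antipodal'' requires an orbit-openness/dimension count that you do not supply --- so you should rely on the Heisenberg argument rather than on that sketch.
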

\begin{proof}
	Consider $\mathbb{B}^{2n+2}$ together with the Bergman metric $g_B$ -- see \cite{GoldmanCHBook} or \cite{BernBhattCplxHyp}.    It is a standard fact that there is a complete unit speed geodesic of $g_B$,  $\gamma:(-\infty, \infty)\to \mathbb{B}^{2n+2}$ such that $\lim_{t\to \pm \infty}\gamma(t) =p_\pm$. The group of oriented isometries of $g_B$ acts transitively on $\mathbb{B}^{2n+2}$ and so there is an isometry $\Phi$ such that $\Phi(\gamma(0))=\mathbf{0}$.  In particular, $\Phi\circ \gamma$ is a parameterization of the line segment through $\mathbf{0}$.   If $\Psi$ is the corresponding element in $Aut_{CR}(\mathbb{S}^{2n+1})$ it follows that $\Psi(p_\pm)$ are the limits of this segment and hence are antipodal.
\end{proof}

We are now able to prove Theorem \ref{intro_thm_hori}.
\begin{proof}[Proof of Theorem \ref{intro_thm_hori}]
		Pick a $p\in \Sigma$,  by  Proposition \ref{AsympProp} 
	$$
	\lambda_{CR}[\Sigma]\geq \lim_{t\to 0^+} |\Psi_{-(1-t) \mathbf{X}(p)}(\Sigma)|_{\mathbb{S}}=|\mathbb{S}^m|_{\mathbb{R}}.
	$$	
	By Lemma \ref{AntipodLem}, there is a $\Psi\in Aut_{CR}(\mathbb{S}^{2n+1})$ such that $\Psi(p)=p$ and $-p \in \Psi(\Sigma)$.  That is, $\Psi(\Sigma)$ contains a pair of antipodal points. 
	When $m=1$,  as $\Sigma$ is closed, 
	$$
	\lambda_{CR}[\Sigma]\geq |\Psi(\Sigma)|_{\mathbb{S}}\geq 2 \dist_{\mathbb{S}}(p,-p)=2\pi=|\mathbb{S}^1|_{\Real}
	$$
	with equality in the second inequality if and only if $\Psi(\Sigma)$ is a single closed geodesic.
	
	When $m\geq 2$,   Proposition \ref{AsympProp} implies that
	$
	\lambda_{CR}[\Sigma]=|\mathbb{S}^m|_{\Real}
	$
	can only occur when 
	$$
	\frac{3m+2}{3m+3}|\mathbf{U}_\Sigma^N|^2+|\mathring{\mathbf{A}}_\Sigma^{\hat{N}}|^2\equiv {0},
	$$
	which occurs only when $\mathbf{U}_\Sigma^N$ and $\mathring{\mathbf{A}}_\Sigma^{\hat{N}}$ both vanish.
	Indeed, for any $p\in \Sigma$ we may, by Proposition \ref{NormalizeMCProp}, choose a $\Psi\in  Aut_{CR}(\mathbb{S}^{2n+1})$ such that $\Psi(p)=p$ and $\mathbf{H}_{\Psi(\Sigma)}(p)=\mathbf{0}$.  For equality to hold, the quantity $\alpha_m^\Sigma(p)$ from  Proposition \ref{AsympProp} must vanish as otherwise, for small enough $t$, the subleading order term is positive and $\lambda_{CR}[\Psi(\Sigma)]=\lambda_{CR}[\Sigma]>|\mathbb{S}^m|_\Real$.  However, by the invariance properties and vanishing of the mean curvature of $\Psi(\Sigma)$ at $p$ one has
	$$
 \alpha^{\Psi(\Sigma)}_m(p)= 	\frac{3m+2}{3m+3}|\mathbf{U}_{\Psi(\Sigma)}^N|^2(p)+|\mathring{\mathbf{A}}_{\Psi(\Sigma)}^{\hat{N}}|^2(p)=		\frac{3m+2}{3m+3}|\mathbf{U}_\Sigma^N|^2(p)+|\mathring{\mathbf{A}}_\Sigma^{\hat{N}}|(p).
 $$
 Thus, $\mathbf{U}_{\Sigma}^N$ and $\mathring{\mathbf{A}}_\Sigma^{\hat{N}}$ identically vanish and so Theorem  \ref{CRinv} establishes the lower bound and rigidity claim -- in particular equality holds only when $\Sigma$ has one component.  It also establishes that the supremum is achieved when $\lambda_{CR}[\Sigma]=|\mathbb{S}^m|_{\Real}$. 
	
To see that the supremum is otherwise achieved we may assume  $\lambda_{CR}[\Sigma]>|\mathbb{S}^m|_{\Real}$ and so, as $\Sigma$ is embedded,  Proposition \ref{AsympProp} implies that there is a compact subset $\bar{B}\subset \mathbb{B}^{2n+2}$ such that 
	$$
	\lambda_{CR}[\Sigma]=\sup_{\mathbf{b}\in \bar{B}} |\Psi_{\mathbf{b}}(\Sigma)|_{\mathbb{S}}=\max_{\mathbf{b}\in \bar{B}} |\Psi_{\mathbf{b}}(\Sigma)|_{\mathbb{S}}
	$$
 where the second equality used the fact that $\mathbf{b}\mapsto  |\Psi_{\mathbf{b}}(\Sigma)|_{\mathbb{S}}$ is continuous and $\bar{B}$ is compact.  Hence, in either case, $\lambda_{CR}[\Sigma]=|\Psi_{\mathbf{b}}(\Sigma)|_{\mathbb{S}}$ for some $\mathbf{b}\in \bar{B}$.	
 
 Finally, if $i: M \to \mathbb{S}^{2n+1}$ is a horizontal immersion, then for a small open neighborhood $U$ of $i(p)$, one has 
 $$U\cap i(M)=\cup_{j=1}^k \Sigma_j$$
 where $\Sigma_j$ are properly embedded horizontal submanifolds of $U\cap \mathbb{S}^{2n+1}$ and each contains $p$.  Proposition \ref{AsympProp} implies that
 $$
\lim_{t\to 1^-}|\Psi_{-(1-t)\mathbf{X}(p)}(\Sigma_j)|_{\mathbb{S}}=|\mathbb{S}^m|_{\Real}.
$$
The final claim immediately follows.
\end{proof}

We observe there are horizontal submanifolds whose  CR-volume is strictly smaller than their conformal volume.

\begin{prop}\label{CRConfAreaDifferProp}
	For every $1\leq m \leq n$, there is an $m$-dimensional horizontal submanifold $\Sigma \subset \mathbb{S}^{2n+1}$ such that
	$$
	\lambda_c[\Sigma]>\lambda_{CR}[\Sigma].
	$$
\end{prop}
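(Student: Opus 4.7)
I will prove the proposition by exhibiting, for each $1\le m\le n$, a contact Whitney sphere $\Sigma\subset\mathbb{S}^{2n+1}$ that fails to be a round $m$-sphere. By Theorem~\ref{intro_thm_hori}, any such $\Sigma$ satisfies $\lambda_{CR}[\Sigma]=|\mathbb{S}^m|_{\mathbb{S}}$, while the rigidity part of Li-Yau's conformal volume inequality \cite{Li1982} says $\lambda_c[\Sigma]=|\mathbb{S}^m|_{\mathbb{S}}$ exactly when $\Sigma$ is a round $m$-sphere---the image under a M\"obius transformation of a totally geodesic $\mathbb{S}^m$. Combining these observations will yield $\lambda_c[\Sigma]>\lambda_{CR}[\Sigma]$.

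For the construction I would take $\mathbb{S}^m_0=\{(x_1,0,\ldots,x_{m+1},0,0,\ldots):\sum x_j^2=1\}\subset\mathbb{S}^{2n+1}$, the standard horizontal totally geodesic sphere, and set $\Sigma_\mathbf{b}=\Psi_\mathbf{b}(\mathbb{S}^m_0)$ with $\mathbf{b}=ib\mathbf{e}_1$ for small $b>0$. By construction $\Sigma_\mathbf{b}$ is a contact Whitney sphere, so it suffices to check that for this choice $\Sigma_\mathbf{b}$ is not a round $m$-sphere. For $m\ge 2$, since $\mathbf{A}_{\mathbb{S}^m_0}$ vanishes identically, Proposition~\ref{CurvatureChangeProp} reduces the umbilicity condition $\mathbf{A}^{\mathbf{b}}=\tfrac{g^{\mathbf{b}}_\Sigma}{m}\mathbf{H}^{\mathbf{b}}$ to
\[
g(\mathbf{S}_\mathbf{b}^\top,Y)J_\mathbb{S}(X)+g(\mathbf{S}_\mathbf{b}^\top,X)J_\mathbb{S}(Y)=\tfrac{2g(X,Y)}{m}J_\mathbb{S}(\mathbf{S}_\mathbf{b}^\top).
\]
A short computation using $W_\mathbf{b}|_{\mathbb{S}^m_0}=(1-b^2)/(1+b^2x_1^2)$ shows $\mathbf{S}_\mathbf{b}^\top\not\equiv 0$, and then taking $X=\mathbf{S}_\mathbf{b}^\top(p)$ together with any nonzero $Y\in T_p\mathbb{S}^m_0$ orthogonal to $\mathbf{S}_\mathbf{b}^\top$ (possible because $m\ge 2$) yields the contradictory equality $|\mathbf{S}_\mathbf{b}^\top|^2J_\mathbb{S}(Y)=0$. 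Thus $\Sigma_\mathbf{b}$ is not totally umbilic, hence not a round $m$-sphere.

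For $m=1$ I would instead expand $\Psi_{ib\mathbf{e}_1}$ to first order in $b$ on $(\cos\theta,0,\sin\theta,0,\ldots)$ using the explicit formula from Section~\ref{backgroundCR}; the resulting image in $\mathbb{R}^{2n+2}\simeq\mathbb{C}^{n+1}$ is
\[
\theta\mapsto(\cos\theta,\,b(1+\cos^2\theta),\,\sin\theta,\,b\cos\theta\sin\theta,\,0,\ldots)+O(b^2).
\]
For small $b>0$ the projection to the $(x_1,x_2)$-plane bijects the exact image onto the full unit circle, so any affine $2$-plane containing the image would be the graph of an affine map from $(x_1,x_2)$ to the remaining coordinates; but the $y_1$-coordinate contains the genuinely quadratic term $b\cos^2\theta$, which cannot be absorbed by such an affine map for $b\ne 0$. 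Hence $\Sigma_{ib\mathbf{e}_1}$ is not contained in any affine $2$-plane and so is not a round circle. The main technical step is the umbilicity calculation in the $m\ge 2$ case together with verifying $\mathbf{S}_\mathbf{b}^\top\not\equiv 0$; both amount to short explicit computations, with the $m=1$ case requiring only a careful reading of the first-order expansion.
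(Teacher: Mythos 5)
Your proposal is correct and follows essentially the same route as the paper: apply $\Psi_{\mathbf{b}}$ to a totally geodesic horizontal sphere to produce a contact Whitney sphere that is not round, so that $\lambda_{CR}$ stays at $|\mathbb{S}^m|_{\mathbb{S}}$ while the rigidity of the Li--Yau/Bryant conformal volume bound forces $\lambda_c$ to exceed it. You supply more detail than the paper on the non-roundness (the umbilicity computation via Proposition~\ref{CurvatureChangeProp} for $m\geq 2$, and the separate planarity argument for $m=1$, which the paper's proof --- written only for a Legendrian $n$-sphere --- leaves implicit); just note that $\mathbf{S}_{\mathbf{b}}^\top$ involves the $J_{\mathbb{S}}(T\Sigma)$-component of $\nabla^{\mathcal{H}}\log W_{\mathbf{b}}$ and so requires the full formula for $W_{\mathbf{b}}$ near $\mathbb{S}^m_0$, not merely its restriction.
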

\begin{rem}
	It would be interesting to know if it is possible for any Legendrian (or horizontal) submanifold $\Sigma$ to satisfy $\lambda_c[\Sigma]< \lambda_{CR}[\Sigma].$   
\end{rem}
\begin{proof}
	Let $\Sigma\subset \mathbb{S}^{2n+1}$ be a totally geodesic Legendrian $n$-sphere in $\Sigma$.  In particular, $\Sigma=\mathbb{S}^{2n+1}\cap P$ for a Lagrangian subspace $P\subset \Real^{2n+2}$.  By Proposition \ref{CurvatureChangeProp}, for $\mathbf{b}\in \mathbb{B}^{2n+2}\setminus P$ and corresponding $\Psi_{\mathbf{b}}\in Aut_{CR}(\mathbb{S}^{2n+1})$, $\Sigma_{\mathbf{b}}=\Psi_{\mathbf{b}}(\Sigma)$ is not a round sphere. Hence, \cite[Proposition 1]{bryantSurfacesConformalGeometry1988} implies  $\lambda_{c}[\Sigma_{\mathbf{b}}]>\lambda_c[\Sigma]=|\mathbb{S}^n|_{\mathbb{S}}=\lambda_{CR}[\Sigma_{\mathbf{b}}]$.
\end{proof}
\subsection{CR-volume as conformal invariant}

If $\phi:M \to \mathbb{S}^{2n+1}$ is a conformal horizontal immersion, then, by \eqref{ConformalMetEqn}, for all $\Psi\in Aut_{CR}(\mathbb{S}^{2n+1})$, $\Psi \circ \phi: M \to \mathbb{S}^{2n+1}$ is also a conformal horizontal immersion.  Hence, as with the Li-Yau conformal volume, to any conformal manifold $(M,[g])$ one may associate a conformal invariant
$$
V_{CR}(2n+1,M, [g] )=\inf_\phi V_{CR}(2n+1,\phi)
$$
where $\phi$ runs over all conformal horizontal immersions $\phi: M \to \mathbb{S}^{2n+1}$.  For large $n$, such immersions exist as there is a conformal immersion $\phi: M\to \mathbb{S}^n$. Thus, $i\circ \phi: M\to \mathbb{S}^{2n+1}$ is a conformal horizontal immersion where  $i: \mathbb{S}^n\to \mathbb{S}^{2n+1}$ is the inclusion map of the standard Legendrian $\mathbb{S}^n$.  Many properties of conformal area from \cite{Li1982} and subsequent works should have analogs for CR-volume. 

Here we directly adapt an argument of \cite{Li1982} and establish a lower bound on the CR-volume of surfaces in terms of their spectral geometry.
\begin{lem}\label{BalanceLem}
	Fix a closed Riemannian manifold $(M,g)$.  If $\phi: M\to \mathbb{S}^{2n+1}$ is a smooth map such that $\phi^{-1}(p)$ is finite for each $p\in \mathbb{S}^{2n+1}$, then there exists $\mathbf{b}_*\in \mathbb{B}^{2n+2}$ so that $\Psi_{\mathbf{b}_*}\circ \phi$ is balanced, i.e., 
	$$
	\int_{M} \mathbf{X}\circ \Psi_{\mathbf{b}_*} \circ \phi\;  dA_{g}=\mathbf{0}.
	$$
\end{lem}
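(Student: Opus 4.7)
The plan is to find $\mathbf{b}_*$ via a topological argument applied to the center-of-mass map
\[
F:\mathbb{B}^{2n+2}\to \overline{\mathbb{B}^{2n+2}}, \qquad F(\mathbf{b})=\frac{1}{|M|_g}\int_M \mathbf{X}\circ \Psi_{\mathbf{b}}\circ \phi\, dA_g.
\]
Because the integrand is valued in $\mathbb{S}^{2n+1}\subset \overline{\mathbb{B}^{2n+2}}$, the averaged vector $F(\mathbf{b})$ lies in $\overline{\mathbb{B}^{2n+2}}$, and $F$ is smooth on $\mathbb{B}^{2n+2}$ by smoothness of $\mathbf{b}\mapsto \Psi_{\mathbf{b}}$ and compactness of $M$. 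The lemma reduces to producing a zero of $F$ in $\mathbb{B}^{2n+2}$.

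The key step is to show that $F$ extends continuously to $\overline{\mathbb{B}^{2n+2}}$ as the identity on the boundary. Fix $\mathbf{b}_0\in \mathbb{S}^{2n+1}$ and take any sequence $\mathbf{b}_k\to \mathbf{b}_0$ in $\mathbb{B}^{2n+2}$. By \eqref{ConvergencePsibEqn}, $\Psi_{\mathbf{b}_k}(\mathbf{z})\to \mathbf{b}_0$ for every $\mathbf{z}\in \mathbb{S}^{2n+1}$ save for (at most) one exceptional point; the hypothesis that preimages under $\phi$ are finite guarantees that the pullback of this point has zero $dA_g$-measure in $M$. Hence $\mathbf{X}\circ \Psi_{\mathbf{b}_k}\circ \phi\to \mathbf{b}_0$ pointwise almost everywhere on $M$, and since the integrands are uniformly bounded in norm by $1$, the dominated convergence theorem yields $F(\mathbf{b}_k)\to \mathbf{b}_0$. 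This gives the continuous extension $F:\overline{\mathbb{B}^{2n+2}}\to \overline{\mathbb{B}^{2n+2}}$ with $F|_{\mathbb{S}^{2n+1}}=\mathrm{id}$.

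Finally, if $F$ were nowhere zero on $\overline{\mathbb{B}^{2n+2}}$, then $\mathbf{b}\mapsto F(\mathbf{b})/|F(\mathbf{b})|$ would be a continuous retraction of the closed ball onto its boundary sphere, contradicting the no-retraction theorem. Therefore $F(\mathbf{b}_*)=\mathbf{0}$ for some $\mathbf{b}_*\in \overline{\mathbb{B}^{2n+2}}$; the identification $F|_{\mathbb{S}^{2n+1}}=\mathrm{id}$ forces $\mathbf{b}_*\in \mathbb{B}^{2n+2}$, which is the desired balancing element. The only non-routine step is the boundary continuity of $F$, where the finite-preimage hypothesis is precisely what is needed to handle the single point at which $\Psi_{\mathbf{b}}$ degenerates; everything else is standard smoothness plus an elementary topological argument.
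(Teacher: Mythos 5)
Your proof is correct and follows essentially the same route as the paper: the same center-of-mass map, the same continuous extension to the closed ball restricting to the identity on $\mathbb{S}^{2n+1}$ (using the finite-preimage hypothesis to control the exceptional point from \eqref{ConvergencePsibEqn}), and the same appeal to the no-retraction theorem. The only cosmetic difference is that you justify passing the limit under the integral by dominated convergence, whereas the paper cites locally uniform convergence away from the exceptional preimage; both are valid.
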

\begin{proof}
Consider the map $	\mathbf{B}:\mathbb{B}^{2n+2}\to \mathbb{B}^{2n+2}$ given by
	$$
\mathbf{B}( \mathbf{b})= \frac{1}{|M|_g}\int_M  \mathbf{X}\circ \Psi_{\mathbf{b}}\circ \phi \;  dA_g.
	$$
	As  $\phi^{-1}(p_0)$ is finite for all $p_0\in \mathbb{S}^{2n+1}$, \eqref{ConvergencePsibEqn} ensures that, away from a finite set of points in $M$,
	$$
	\lim_{\mathbf{b}\to \mathbf{b}_0}\mathbf{X}\circ \phi=\mathbf{b}_0.
	$$
	Moreover, the convergence is uniform on any compact subset of the complement of  $\phi^{-1}(-p_0)$.
	Hence, $\mathbf{B}$ extends to a continuous map $\bar{\mathbf{B}}: \bar{\mathbb{B}}^{2n+2}\to \bar{\mathbb{B}}^{2n+2}$ satisfying
	$$
	\bar{\mathbf{B}}(\mathbf{b}_0)=\mathbf{b}_0
	$$
	for $\mathbf{b}_0\in \partial \bar{\mathbb{B}}^{2n+2}$.  
 The existence of the desired $\mathbf{b}_*$ follows -- indeed otherwise there would be a retraction from $\bar{\mathbb{B}}^{2n+2}$ to $\partial \bar{\mathbb{B}}^{2n+2}=\mathbb{S}^{2n+1}$.  
\end{proof}

\begin{prop}\label{CRVolumeLowBoundProp}
	If $(M,g)$ is a closed Riemannian surface, then
\begin{equation} \label{CRVolEigenEqn}
	V_{CR}( 2n+1,M, [g])\geq \frac{1}{2}\lambda_1(g) |M|_g
\end{equation}
 when this is defined. Here $\lambda_1(g)$ is the first non-trivial eigenvalue of $g$.  Equality holds only if there is a horizontal isometric minimal immersion $i:M\to \mathbb{S}^{2n+1}$.  
\end{prop}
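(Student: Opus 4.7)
The plan is to adapt the classical Li-Yau argument \cite{Li1982} to the CR-setting, exploiting that by \eqref{ConformalMetEqn} elements of $Aut_{CR}(\mathbb{S}^{2n+1})$ act conformally on the horizontal bundle $\mathcal{H}$. Fix any conformal horizontal immersion $\phi:M\to\mathbb{S}^{2n+1}$ (with finite preimages, arranged by a small perturbation if needed). By Lemma \ref{BalanceLem}, there exists $\mathbf{b}_*\in\mathbb{B}^{2n+2}$ such that $\tilde{\phi} := \Psi_{\mathbf{b}_*}\circ\phi$ is balanced, i.e., $\int_M \tilde{\phi}\, dA_g = \mathbf{0}$, viewing $\tilde{\phi}$ as a $\Real^{2n+2}$-valued map. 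Since $\phi$ is horizontal, \eqref{ConformalMetEqn} gives $\tilde{\phi}^* g_{\mathbb{S}} = W_{\mathbf{b}_*}(\phi)\,\phi^* g_{\mathbb{S}}$, so $\tilde{\phi}$ remains a conformal horizontal immersion whose induced metric lies in the conformal class of $g$.

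Writing the Euclidean coordinates of $\tilde{\phi}$ as $\tilde{\phi}_1,\ldots,\tilde{\phi}_{2n+2}$, the balancing condition is precisely that each $\tilde{\phi}_j$ is $L^2(g)$-orthogonal to the constants. Hence the variational characterization of $\lambda_1(g)$ gives
$$
\lambda_1(g)\int_M \tilde{\phi}_j^2\, dA_g \;\leq\; \int_M |\nabla_g\tilde{\phi}_j|^2\, dA_g
$$
for each $j$. Summing over $j$ and using $\sum_j\tilde{\phi}_j^2\equiv 1$ on $\mathbb{S}^{2n+1}$ collapses the left side to $\lambda_1(g)|M|_g$, while the right side is the total Dirichlet energy $\int_M |d\tilde{\phi}|_g^2\, dA_g$. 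Because $\dim M = 2$, this energy is a conformal invariant of $g$, and since $\tilde{\phi}$ is conformal a direct computation yields
$$
\int_M |d\tilde{\phi}|_g^2\, dA_g = 2\,|M|_{\tilde{\phi}^* g_{\mathbb{S}}} \;\leq\; 2\,V_{CR}(2n+1,\phi).
$$
Taking the infimum over conformal horizontal immersions $\phi$ then proves \eqref{CRVolEigenEqn}.

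For the equality case, saturation forces each coordinate $\tilde{\phi}_j$ to be a $\lambda_1(g)$-eigenfunction of $\Delta_g$ for some choice of $\phi$ and $\mathbf{b}_*$. Together with the conformal covariance of the Laplacian in dimension two, Takahashi's theorem then identifies $\tilde{\phi}$ with a minimal immersion with respect to $\tilde{\phi}^* g_{\mathbb{S}} = \tfrac{\lambda_1(g)}{2} g$; rescaling $g$ within its conformal class produces an isometric horizontal minimal immersion $M\to\mathbb{S}^{2n+1}$.

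The main obstacle is technical: verifying that Lemma \ref{BalanceLem} applies to an arbitrary conformal horizontal immersion (which may fail to be injective) and extracting a realizing immersion in the equality case. Both points are handled by standard approximation, using that the self-intersection locus of an immersed surface in $\mathbb{S}^{2n+1}$ has measure zero and that the integrand in $\mathbf{b}\mapsto\int_M \mathbf{X}\circ\Psi_{\mathbf{b}}\circ\phi\, dA_g$ is continuous on $\mathbb{B}^{2n+2}$ with the correct boundary behavior from \eqref{ConvergencePsibEqn}.
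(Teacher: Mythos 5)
Your proof of the inequality \eqref{CRVolEigenEqn} is essentially the paper's argument: balance via Lemma \ref{BalanceLem}, apply the variational characterization of $\lambda_1(g)$ coordinatewise, use $\sum_j \tilde{\phi}_j^2\equiv 1$ and the conformal invariance of the Dirichlet energy in dimension two, then take the infimum over $\phi$. (Minor remark: no perturbation is needed to apply Lemma \ref{BalanceLem} --- an immersion of a closed surface automatically has finite point preimages, since preimages are discrete in a compact manifold.)

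The equality case, however, has a genuine gap. You write that ``saturation forces each coordinate $\tilde{\phi}_j$ to be a $\lambda_1(g)$-eigenfunction \emph{for some choice of} $\phi$ \emph{and} $\mathbf{b}_*$,'' but $V_{CR}(2n+1,M,[g])$ is defined as an \emph{infimum} over conformal horizontal immersions, and nothing in your argument shows this infimum is attained. Equality in \eqref{CRVolEigenEqn} only gives you a minimizing sequence $\phi_i$ of balanced conformal horizontal immersions with $\int_M|\nabla_g\phi_i|^2\,dA_g\to \lambda_1(g)|M|_g$; for no single $i$ need the coordinates be eigenfunctions, so Takahashi's theorem cannot be invoked directly. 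The paper fills exactly this hole with a direct-method argument: the $\phi_i$ are uniformly bounded in $W^{1,2}$, a subsequence converges weakly to some $\phi_\infty\in W^{1,2}(M;\mathbb{S}^{2n+1})$, strong $L^2$ convergence shows $\phi_\infty$ is still balanced so the eigenvalue lower bound persists in the limit, whence there is no energy loss and the convergence is strong in $W^{1,2}$; only then does the variational characterization force each component of $\phi_\infty$ to be a first eigenfunction (or zero a.e.), giving smoothness, the pointwise identity $|\nabla_g\phi_\infty|^2=\lambda_1(g)$, and the fact that $\phi_\infty$ is a horizontal conformal minimal immersion. Your closing paragraph gestures at ``standard approximation'' but the approximations you describe (measure of the self-intersection locus, continuity of the balancing map) address the balancing step, not this compactness and no-energy-loss step, which is the substantive content of the rigidity claim.
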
 
\begin{proof}
	Let $\phi: M\to \mathbb{S}^{2n+1}$ be a horizontal conformal immersion.   By Lemma \ref{BalanceLem} and the properties of $Aut_{CR}(\mathbb{S}^{2n+1})$, we may replace $\phi$ by $\Psi_{\mathbf{b}_*}\circ \phi$ and so assume $\phi$ is also balanced.
 	As $\phi$ is conformal and balanced, the variational characterization of eigenvalues gives
	$$
	|M|_{\phi^* g_{\mathbb{S}}}= \frac{1}{2}\int_M |\nabla_g \phi|^2 dA_g\geq\frac{1 }{2} \lambda_1(g)\int_M| \phi|^2 dA_g=\frac{1 }{2}  \lambda_1(g)|M|_g.
	$$
 	To conclude the proof of \eqref{CRVolEigenEqn}, observe that, by definition, for every $\epsilon>0$ there is a balanced and conformal horizontal map $\phi_\epsilon: M\to \mathbb{S}^{2n+1}$ so that
	$$
	V_{CR}(2n+1,  M, [g])+\epsilon\geq 	V_{CR}(2n+1, \phi_\epsilon)\geq 	|M|_{\phi^*_\epsilon g_{\mathbb{S}}}\geq \frac{1}{2} \lambda_1(g) |M|_g.
	$$
	As $\epsilon>0$ was arbitrary, the desired inequality follows.  
	
	If equality holds, then there is a sequence of conformal horizontal immersions $\phi_i: M\to \mathbb{S}^{2n+1}$ so that
	$$
	\frac{1}{2} \lambda_1(g) |M|_g+\frac{1}{i}\geq V_{CR}(2n+1,\phi_i) \geq \frac{1}{2} \lambda_1(g) |M|_g.
	$$
	As above, we may assume that the $\phi_i$ are balanced.  It follows that
	$$
	\lambda_1(g) |M|_g+\frac{2}{i}\geq \int_M |\nabla_g \phi_i|^2 dA_g  \geq  \lambda_1(g) |M|_g.
	$$
	Here the first inequality used the definition of CR-volume and the second used the variational property of the first eigenvalue as above.
	
	It follows that the $\phi_i$ are uniformly bounded in $W^{1,2}(M)$ and so, up to passing to a subsequence, there is a limit $\phi_\infty\in W^{1,2}(M; \mathbb{S}^{2n+1})$.  As the convergence is strong in $L^2(M; \mathbb{S}^{2n+1})$ the limit is balanced and so 
	$$ \int_M |\nabla_g \phi_\infty|^2 dA_g  \geq  \lambda_1(g) |M|_g.$$
	Hence, there is no energy loss in the limit and so the convergence is strong in $W^{1,2}$.  Moreover,  the variational characterization of $\lambda_1(g)$ implies that each component of  $\phi_\infty$ is either zero a.e. or is an eigenfunction with eigenvalue $\lambda_1$.  In either case, each component differs from a smooth function on a set of measure zero.  That is, we may take $\phi_\infty$ to be smooth.  
As $1=|\phi_\infty|^2$ and $\Delta_g \phi_\infty=-\lambda_1(g) \phi_\infty$, we have
	$$
	 0=\Delta_g |\phi_\infty|^2=2\phi_\infty \cdot \Delta_g \phi_\infty+ 2 |\nabla_g \phi_\infty|^2=-2\lambda_1(g) +2 |\nabla_g \phi_\infty|^2.
	 $$
	 That is, $|\nabla_g \phi_\infty|^2=\lambda_1(g)$ holds pointwise.
	
	The fact that the $\phi_i$ are conformal horizontal immersions,  the strong convergence in $W^{1,2}$, and the fact that $\phi_\infty$ is smooth means $\phi_\infty$ is horizontal and is weakly conformal. Moreover, as $|\nabla_g \phi_\infty|^2=\lambda_1(g)>0$,  $\phi_\infty$ is also an immersion. Finally, as $\phi_\infty$ is a map by first eigenfunctions,  it is a horizontal conformal minimal immersion and $\phi_\infty^* g_{\mathbb{S}}$ is a constant multiple of $g$. 
\end{proof}

\section{CR-Willmore Energy}
\label{CRWillmoreSec}

For an $m$-dimensional horizontal submanifold $\Sigma\subset \mathbb{S}^{2n+1}$, $m\geq 2$  consider
$$
\mathcal{U}_{CR}[\Sigma]= \frac{1}{m}\int_\Sigma |{\mathbf{U}}_\Sigma^N|^{m} dV_\Sigma \mbox{ and } \mathcal{B}_{CR}[\Sigma]=\frac{1}{m}\int_\Sigma |\mathring{\mathbf{A}}_\Sigma^{\hat{N}}|^{m} dV_\Sigma.
$$
Both functionals are $Aut_{CR}(\mathbb{S}^{2n+1})$ invariant.  Indeed, by Proposition \ref{UInvProp} and the fact that $\Psi\in Aut_{CR}(\mathbb{S}^{2n+1})$ acts on $\mathcal{H}$, and hence on any horizontal submanifold, as a conformal immersion, 
$$
|\mathbf{U}_{\Psi(\Sigma)}|_{g_{\Psi(\Sigma)}}^{\frac{m}{2}}(\Psi(p)) =|\mathbf{U}_{\Psi(\Sigma)}|_{\Psi^*g_{\Sigma}}^{\frac{m}{2}}(\Psi(p))=W_{\mathbf{b}}^{m} (p) |\mathbf{U}_{\Sigma}|_{g_{\Sigma}}^{\frac{m}{2}} (p)
$$
for any $\mathbf{b}\in \mathbb{B}^{2n+2}$.  The same holds for $\mathring{\mathbf{A}}_\Sigma$. Moreover,
$$
\Psi^* dV_{\Psi(\Sigma)}= W_{\mathbf{b}}^{-\frac{m}{2}} dV_\Sigma.
$$ 
By Theorem \ref{CRinv}, when $m\geq 2$ and $\Sigma$ is connected, the two functionals simultaneously vanish if and only if $\Sigma$ is a contact Whitney sphere. For Legendrian surfaces in $\mathbb{S}^5$,  $\mathcal{U}_{CR}$ is essentially the energy introduced by Wang in \cite{wangWillmoreLegendrianSurfaces2007}.
%
%


It follows from \eqref{GaussEqnUeqn}  and Lemma \ref{RicciULem}  that
\begin{align*}
	\mathrm{R}_\Sigma&=m(m-1)+\frac{m-1}{m} |\mathbf{H}_\Sigma|^2 - |{\mathbf{U}}_\Sigma^N|^2-|\mathring{\mathbf{A}}_\Sigma^{\hat{N}}|^2-\frac{2(m-1)}{m(m+2)}|\mathbf{H}^N_\Sigma|^2.
\end{align*}
When $m=2$ this can be rewritten as
$$
|{\mathbf{U}}_\Sigma^N|^2 +|\mathring{\mathbf{A}}_\Sigma^{\hat{N}}|^2=2-R_{\Sigma}+ \frac{1}{2} |\mathbf{H}_\Sigma|^2 - \frac{1}{4} |\mathbf{H}_\Sigma^{{N}}|^2=2-R_{\Sigma}+ \frac{1}{4} |\mathbf{H}_\Sigma^{{N}}|^2+ \frac{1}{2} |\mathbf{H}_\Sigma^{\hat{N}}|^2.
$$
For  a  closed, orientable surface of genus $g$, $\Sigma$, the Gauss-Bonnet Theorem implies
$$
\mathcal{U}_{CR}[\Sigma]+\mathcal{B}_{CR}[\Sigma]= \int_{\Sigma}1+ \frac{1}{8}|\mathbf{H}_\Sigma^{{N}}|^2 + \frac{1}{4} |\mathbf{H}_\Sigma^{\hat{N}}|^2 dA_\Sigma+ 4\pi (g-1).
$$
This leads us to introduce a functional generalizing \eqref{CRWillmoreEqn}
$$
\mathcal{W}_{CR}[\Sigma]=\int_{\Sigma}1+  \frac{1}{8}|\mathbf{H}_\Sigma^{{N}}|^2 + \frac{1}{4} |\mathbf{H}_\Sigma^{\hat{N}}|^2 dA_\Sigma=\int_{\Sigma}1+  \frac{1}{8}|\mathbf{H}_\Sigma|^2 + \frac{1}{8} |\mathbf{H}_\Sigma^{\hat{N}}|^2 dA_\Sigma, 
$$
so that when $\Sigma$ is closed and orientable
 $$\mathcal{W}_{CR}[\Sigma]=\mathcal{U}_{CR}[\Sigma]+\mathcal{B}_{CR}[\Sigma]+4\pi (1-g).$$ 
 Hence, this is also an $Aut_{CR}(\mathbb{S}^{2n+1})$ invariant energy.  
We now prove Theorem \ref{intro_thm_lag}.
\begin{proof}[Proof of Theorem \ref{intro_thm_lag}]
We have already established the invariance of $\mathcal{W}_{CR}$ for Legendrian surfaces.  This invariance and \eqref{CRWillmoreEqn} imply
$$
\mathcal{W}_{CR}[\Sigma]=\mathcal{W}_{CR}[\Psi(\Sigma)]\geq |\Psi(\Sigma)|_{\mathbb{S}}
$$
for all $\Psi\in \mathrm{Aut}_{CR}(\mathbb{S}^5)$.  
Hence, taking a sup gives $\mathcal{W}_{CR}[\Sigma]\geq \lambda_{CR}[\Sigma].$
When $\Sigma$ is embedded Theorem \ref{intro_thm_hori} implies there is $\Psi_0\in \mathrm{Aut}_{CR}(\mathbb{S}^5)$ so
$$
\lambda_{CR}[\Sigma]=|\Psi_0(\Sigma)|_{\mathbb{S}}
$$
and one has equality with $\mathcal{W}_{CR}[\Sigma]$ if and only if $\Psi_0(\Sigma)$ is minimal.
\end{proof}

Finally, we prove Proposition \ref{CRWillmoreHexTorProp}.
\begin{proof}[Proof of Proposition \ref{CRWillmoreHexTorProp}]
 Let 
 $\phi_H: \mathbb{C}/\Lambda_H\to \Sigma_H\subset \mathbb{S}^5$ be the conformal parameterization of the hexagonal torus and set $g_H=\phi^* g_{\mathbb{S}}$, which is a flat metric. 
By a direct computation,  for instance \cite[Equation (3.12)]{montielMinimalImmersionsSurfaces1986}, 
\begin{align*}
 |\Sigma_H|_{\mathbb{S}}&= \lambda_{CR}[\Sigma_H]=V_{CR}(5, \phi_H)=V_{CR}(5,  \mathbb{C}/\Lambda_H\, [g_H])\\
 &=\frac{1}{2}\lambda_1(g_H) | \mathbb{C}/\Lambda_H|_{g_H}=\frac{4\sqrt{3}}{3}\pi^2.
\end{align*}
Hence, by Proposition \ref{CRVolumeLowBoundProp} if $\Sigma$ is any other Legendrian torus parameterized by a conformal immersion $\phi:  \mathbb{C}/\Lambda_H\to \Sigma$ one has
$$
\lambda_{CR}[\Sigma]=V_{CR}(5, \phi)\geq V_{CR}(5, \mathbb{C}/\Lambda_H, [g_H])\geq  |\Sigma_H|_{\mathbb{S}}.
$$
 Theorem \ref{intro_thm_hori} implies either the inequality is strict or $\phi$ is an embedding as
$$
\frac{4\sqrt{3}}{3}\pi^2<8\pi =2|\mathbb{S}^2|_{\Real}.
$$
Hence, by Theorem \ref{intro_thm_lag}, 
$$
\mathcal{W}_{CR}[\Sigma]\geq \lambda_{CR}[\Sigma]\geq  |\Sigma_H|_{\mathbb{S}}
$$
 and equality holds  if and only if $\Psi_0(\Sigma)$ is minimal for some $\Psi_0\in Aut_{CR}(\mathbb{S}^5)$.

  In this last case, $\phi'=\Psi_0\circ \phi$ is an embedding by first eigenfunctions of $\mathbb{C}/\Lambda_H$ with its flat metric into $\mathbb{S}^5$.  As the multiplicity of the first eigenfunction of this space is exactly six -- see for instance \cite[Proposition 5]{montielMinimalImmersionsSurfaces1986} -- and $\phi_H$ is also such an embedding, and both are horizontal, there is a unitary matrix $A\in \mathbf{U}(6)$ so $\phi'=\Psi_A\circ \phi_H$.  That is, $\Sigma=\Psi(\Sigma_H)$ for some $\Psi\in Aut_{CR}(\mathbb{S}^5)$.
\end{proof}
\subsection{Relation with an energy of Montiel-Urbano}\label{MontielUrbanoSec}
Let $\Pi: \mathbb{S}^{2n+1}\to \mathbb{CP}^n$ be the Hopf fibration satisfying  $\hat{\mathbf{T}}_p\in \ker D\Pi_p$.  This is a Riemannian submersion between $g_{\mathbb{S}}$ and $g_{\mathbb{CP}}$, the Fubini-Study metric.   As established in  \cite{reckziegelCorrespondenceHorizontalSubmanifolds1988}, $\phi: \Sigma\to  \mathbb{S}^{2n+1}$ is a Legendrian immersion if and only if $\Pi \circ \phi$ is a Lagrangian immersion into $\mathbb{CP}^n$.  Moreover,  the extrinsic geometries of $\phi$ and $\Pi\circ \phi$ correspond.  For instance, when $\phi: \mathbb{S}^n \to \mathbb{S}^{2n+1}$ is an embedding of the standard Legendrian $\mathbb{S}^n$, $\Pi\circ \phi: \mathbb{S}^n \to \mathbb{RP}^n \subset \mathbb{CP}^n$ is the double cover of a totally geodesic Lagrangian $\mathbb{RP}^n$.  Likewise, the contact Whitney spheres project to the Whitney spheres in $\mathbb{CP}^n$. 

 In general,  \cite{reckziegelCorrespondenceHorizontalSubmanifolds1988} implies any Lagrangian immersion into $\mathbb{CP}^n$ can be locally lifted along $\Pi$ to a Legendrian embedding into $\mathbb{S}^{2n+1}$, but there are global obstructions.  A general condition ensuring a Lagrangian immersion $\phi: M^n\to \mathbb{CP}^n$ may be lifted to a Legendrian embedding $\tilde{\phi}: M^n\to \mathbb{S}^{2n+1}$ is given in \cite{baldridgeLiftingLagrangianImmersions2022}.  In \cite{RizellGolovko}, the authors study when a Lagrangian torus in $\mathbb{CP}^n$ admits a cover that can be lifted to a Legendrian embedding in $\mathbb{S}^{2n+1}$ --  these covers are examples of what they call \emph{Bohr-Sommerfeld} Lagrangian immersions.

In  \cite{montielWillmoreFunctionalCompact2002},  Montiel-Urbano introduce an energy, $W^-$,  of surfaces in $\mathbb{CP}^2$ related to the Willmore energy.  
A consequence of the correspondence of \cite{reckziegelCorrespondenceHorizontalSubmanifolds1988} is that, for a Legendrian surface $\Sigma\subset \mathbb{S}^5$, counting multiplicities,
$$\mathcal{W}_{CR}[\Sigma]=\frac{1}{2}W^-[\Pi(\Sigma)].$$
As a consequence, we obtain from Theorem \ref{intro_thm_hori} a special case of \cite[Corollary 6] {montielWillmoreFunctionalCompact2002}.
\begin{prop}
	If $\phi: \mathbb{S}^2\to \mathbb{CP}^2$ is a Lagrangian immersion, then
	$$
	W^-[\phi]\geq 8\pi
	$$
	with equality if and only if $\phi$ parameterizes a Whitney sphere or is the double cover of a totally geodesic $\mathbb{RP}^2$.
\end{prop}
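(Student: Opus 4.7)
The plan is to reduce the inequality to the Legendrian setting in $\mathbb{S}^5$ via the Hopf fibration $\Pi$ and then invoke the results proved earlier in the paper. Since $\mathbb{S}^2$ is simply connected, the Lagrangian immersion $\phi$ admits a global horizontal lift $\tilde{\phi}:\mathbb{S}^2\to\mathbb{S}^5$ along $\Pi$: local horizontal lifts exist by \cite{reckziegelCorrespondenceHorizontalSubmanifolds1988}, and the absence of monodromy in an $\mathbb{S}^1$-bundle over a simply connected base lets them be assembled into a global lift. The resulting $\tilde{\phi}$ is a Legendrian immersion, and the correspondence of \cite{reckziegelCorrespondenceHorizontalSubmanifolds1988} together with the identity recorded in the paragraph preceding the statement gives
\[
\tfrac{1}{2}W^{-}[\phi]=\mathcal{W}_{CR}[\tilde{\phi}].
\]

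Second, I would extend the bound $\mathcal{W}_{CR}\geq\lambda_{CR}$ from Theorem \ref{intro_thm_lag} to the immersed setting: from the $Aut_{CR}(\mathbb{S}^5)$-invariance of the CR-Willmore integrand and the pointwise inequality $\mathcal{W}_{CR}[\Psi\circ\tilde{\phi}]\geq |\mathbb{S}^2|_{(\Psi\circ\tilde{\phi})^{*}g_{\mathbb{S}}}$ for every $\Psi\in Aut_{CR}(\mathbb{S}^5)$, I obtain $\mathcal{W}_{CR}[\tilde{\phi}]\geq V_{CR}(5,\tilde{\phi})$. Applying Theorem \ref{intro_thm_hori} to the horizontal immersion $\tilde{\phi}$ yields $V_{CR}(5,\tilde{\phi})\geq k\cdot|\mathbb{S}^2|_{\mathbb{S}}=4\pi k$, where $k\geq 1$ is the multiplicity of $\tilde{\phi}$ above a generic point. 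Chaining the inequalities gives
\[
W^{-}[\phi]=2\mathcal{W}_{CR}[\tilde{\phi}]\geq 2V_{CR}(5,\tilde{\phi})\geq 8\pi k\geq 8\pi.
\]

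For equality, $W^{-}[\phi]=8\pi$ forces $k=1$ (so $\tilde{\phi}$ is generically injective), $V_{CR}(5,\tilde{\phi})=4\pi$, and $\mathcal{W}_{CR}[\tilde{\phi}]=V_{CR}(5,\tilde{\phi})$. The rigidity clause of Theorem \ref{intro_thm_hori} then forces $\tilde{\phi}(\mathbb{S}^2)$ to be a contact Whitney sphere, i.e., $\tilde{\phi}(\mathbb{S}^2)=\Psi_0(\mathbb{S}^2_{\mathrm{tg}})$ for some $\Psi_0\in Aut_{CR}(\mathbb{S}^5)$ and totally geodesic horizontal 2-sphere $\mathbb{S}^2_{\mathrm{tg}}\subset\mathbb{S}^5$. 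Projecting by $\Pi$ splits into two cases: if $\tilde{\phi}(\mathbb{S}^2)$ is (after a CR-automorphism) the totally geodesic Legendrian $\mathbb{S}^2_{\mathrm{tg}}$, then $\phi$ realizes the double cover of the totally geodesic $\mathbb{RP}^2=\Pi(\mathbb{S}^2_{\mathrm{tg}})$; otherwise $\phi$ parameterizes a Whitney sphere in $\mathbb{CP}^2$, since by definition the Whitney spheres in $\mathbb{CP}^2$ are the $\Pi$-images of the non-totally-geodesic contact Whitney spheres.

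The main obstacle I expect is the careful extension of the strict-inequality portion of Theorem \ref{intro_thm_lag} (stated in the paper only for embedded surfaces) to the immersed setting, since in the equality analysis one needs to rule out immersions $\tilde{\phi}$ that are generically one-to-one but not globally embedded. A secondary subtlety is verifying that the correspondence of \cite{reckziegelCorrespondenceHorizontalSubmanifolds1988} identifies the two equality families in $\mathbb{S}^5$---the totally geodesic Legendrian $\mathbb{S}^2$ and the non-totally-geodesic contact Whitney spheres---with, respectively, the double cover of the totally geodesic $\mathbb{RP}^2$ and the Whitney immersions into $\mathbb{CP}^2$, matching precisely the two families named in the proposition.
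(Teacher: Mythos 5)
Your proposal follows essentially the same route as the paper: lift $\phi$ to a Legendrian immersion $\tilde{\phi}:\mathbb{S}^2\to\mathbb{S}^5$ via the Reckziegel correspondence and simple connectivity, use $W^-[\phi]=2\mathcal{W}_{CR}[\tilde{\phi}]$, and then invoke the lower bound and rigidity from Theorem \ref{intro_thm_hori}. Your extra care with the multiplicity $k$ is a genuine refinement of the paper's terser argument, and the ``obstacle'' you flag resolves easily: equality forces $k=1$ at every point, so $\tilde{\phi}$ is an injective immersion of a compact manifold, hence an embedding, and the embedded rigidity statement applies directly.
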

\begin{proof}
	As $\mathbb{S}^2$ is simply connected, the lifting of \cite{reckziegelCorrespondenceHorizontalSubmanifolds1988} yields a Legendrian immersion
	$$
	\tilde{\phi}: \mathbb{S}^2\to \mathbb{S}^5 \mbox{ with }	\phi=\Pi\circ\tilde{\phi}.
	$$
  By Theorem \ref{intro_thm_hori},
	$$
	W^-[\phi]=2\mathcal{W}_{CR}[\tilde{\phi}]\geq 2 \lambda_{CR}[\tilde{\phi}]\geq 8\pi
	$$
 with equality only when $\tilde{\phi}$ parameterizes a contact Whitney sphere.  Hence,  $\phi$ parameterizes a Whitney sphere or double covers a totally geodesic $\mathbb{RP}^2$.
\end{proof}

Finally, as observed in the introduction, the parameterization of the hexagonal torus, $\phi_H$,  induces a three-fold cover of the Lagrangian Clifford torus $\Sigma_{C}^{\mathbb{CP}}$.
Thus,
$$
\mathcal{W}_{CR}[\Sigma_H]=\mathcal{W}_{CR}[\phi_H]=\frac{1}{2}W^-[\Pi\circ \phi_H]=\frac{3}{2} W^-[\Sigma_{C}^{\mathbb{CP}}].
$$
It follows from \cite[Theorem 1.3]{RizellGolovko} that any monotone Lagrangian torus in $\mathbb{CP}^2$ admits a three-fold cover that lifts along $\Pi$ to a Legendrian torus in $\mathbb{S}^5$.  Being monotone is a natural condition in this setting and holds for any Lagrangian torus Hamiltonian isotopic to $\Sigma_{C}^{\mathbb{CP}}$ -- see \cite[pg. 55]{Joyce} and \cite{EvansLCP} for further discussion of this condition in related geometric settings.
Hence, if conjecture \ref{CRWillmoreConj} holds, then any monotone Lagrangian torus $\Sigma \subset \mathbb{CP}^2$ and in particular any surface Hamiltonian isotopic to $\Sigma_{C}^{\mathbb{CP}}$ satisfies
	$$
	W^-[\Sigma]\geq W^-[\Sigma_{C}^{\mathbb{CP}}].
$$
This is a special case of a  conjecture of Montiel-Urbano \cite[Remark 3]{montielWillmoreFunctionalCompact2002} who posit this inequality for all Lagrangian tori (and even ask if it holds for all tori).

\appendix
\section{Geometric Computations}

\subsection{Rank one deformation}
Let $(M,g)$ be a Riemannian manifold and suppose that $\tau$ is a smooth one form on $M$ and $\alpha$ a smooth, possibly non-positive,  function.  We suppose that $|\tau|_g^2 \alpha>-1$.  We may then define a new Riemannian metric
$$
h=g+\alpha \tau\otimes \tau.
$$
Denote by $\mathbf{T}$  the vector field satisfying
$$
g(\mathbf{T}, Z)=\tau(Z).
$$
Let $\nabla^g$ denote the Levi-Civita connection of $g$ and let $\nabla^h$ be the Levi-Civita connection of $h$ and denote their difference by the $(1,2)$ tensor field $C$ given by
$$
\nabla_{X}^h Y -\nabla_X^g Y =C(X,Y).
$$

In what follows it will be convenient to assume $\mathbf{T}$ satisfies the following hypotheses:  
\begin{equation}\label{THypeqn}
	|\mathbf{T}|_g=1,  \nabla_{\mathbf{T}}^g \mathbf{T}=0 \mbox{ and } 
	\nabla_{X}^g \mathbf{T}= \mathbf{a}(X) \mbox{ for $X$ $g$-orthogonal to $\mathbf{T}$}.
\end{equation}
Here $\mathbf{a}$ is a $(1,1)$ tensor field satisfying
$$
g(\mathbf{a}(X), Y)=-g(X,\mathbf{a}(Y)) \mbox{ and } g(\mathbf{a}(X), \mathbf{T})=0
$$
for $X,Y$ orthogonal to $\mathbf{T}$. These are conditions one expects for the Reeb vector field of a Sasakian almost contact metric structure.

We recall the following computation from \cite[Appendix B]{BernBhattCplxHyp}.
\begin{prop}\label{RankOneProp}
Suppose $\mathbf{T}$ satisfies \eqref{THypeqn}.  When $X,Y$ are orthogonal to $\mathbf{T}$, the tensor field $C$ satisfies
$$
C(X,Y)=\mathbf{0}. 
$$
\end{prop}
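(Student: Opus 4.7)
The plan is to apply the Koszul formula for both connections simultaneously and compare. Fix a point $p \in M$ and horizontal vectors $X_p, Y_p \in \ker \tau_p$. Since $\ker \tau$ is a smooth corank-one distribution, extend $X$ and $Y$ to local vector fields satisfying $\tau(X) \equiv 0$ and $\tau(Y) \equiv 0$. As $C$ is a tensor, it suffices to show that $g(C(X,Y), Z) = 0$ for a spanning family of test vectors $Z$, which splits naturally into two cases: $Z$ extended horizontally so that $\tau(Z) \equiv 0$, and $Z = \mathbf{T}$.

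In the first case, each of the six terms in the Koszul formula for $h$ reduces to the corresponding term for $g$. The three pointwise inner-product terms $h(Y,Z), h(X,Z), h(X,Y)$ agree with their $g$-counterparts since $\tau$ annihilates each of $X, Y, Z$, and the three bracket terms $h([\,\cdot\,,\cdot], \cdot)$ are similarly undisturbed because at least one of their three slots lies in $\ker \tau$. Using $h|_{\ker \tau} = g|_{\ker \tau}$ to translate the left-hand side, one obtains $g(\nabla^h_X Y, Z) = g(\nabla^g_X Y, Z)$, so the horizontal component of $C(X,Y)$ vanishes.

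In the second case, the Koszul formula leaves precisely one correction, namely $\alpha\,\tau([X,Y])$ arising from $h([X,Y], \mathbf{T})$. Here the hypotheses \eqref{THypeqn} play their decisive role: because $\nabla^g_X \mathbf{T} = \mathbf{a}(X)$ with $\mathbf{a}$ antisymmetric on the horizontal distribution, a short computation using torsion-freeness identifies $\tau([X,Y]) = -2g(\mathbf{a}(X), Y) = 2g(\nabla^g_X Y, \mathbf{T})$. The identity $|\mathbf{T}|_h^2 = 1 + \alpha$ gives $h(\nabla^h_X Y, \mathbf{T}) = (1+\alpha)\, g(\nabla^h_X Y, \mathbf{T})$, so the common nonzero factor $(1+\alpha)$ cancels from both sides of the resulting equation, yielding $g(\nabla^h_X Y, \mathbf{T}) = g(\nabla^g_X Y, \mathbf{T})$. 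Combining both cases gives $C(X,Y) = \mathbf{0}$.

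The main obstacle is the evaluation of the bracket correction in the $Z = \mathbf{T}$ case: without the antisymmetry and unit-norm conditions built into \eqref{THypeqn}, the term $\alpha\,\tau([X,Y])$ would not collapse to a scalar multiple of $g(\nabla^g_X Y, \mathbf{T})$, and the two connections would genuinely differ along $\mathbf{T}$. The hypotheses in \eqref{THypeqn} are thus precisely what render the rank-one deformation invisible to horizontal pairs, which is the geometric content of the proposition.
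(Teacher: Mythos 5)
Your proof is correct. It reaches the same conclusion by a more self-contained route than the paper, which simply quotes the general component formula for the difference tensor $C$ of a rank-one deformation from Appendix B of \cite{BernBhattCplxHyp} and observes that, for $X,Y$ orthogonal to $\mathbf{T}$, every term of $c(X,Y,\mathbf{T})$ and $c(X,Y,E_i)$ vanishes under \eqref{THypeqn}, the only nontrivial cancellation being $\alpha g(\mathbf{a}(X),Y)+\alpha g(\mathbf{a}(Y),X)=0$. You instead run the Koszul formula directly on horizontal extensions of $X$ and $Y$ and split the test vector into horizontal $Z$ versus $Z=\mathbf{T}$; the same antisymmetry of $\mathbf{a}$ enters through your identity $\tau([X,Y])=-2g(\mathbf{a}(X),Y)=2g(\nabla^g_X Y,\mathbf{T})$, which produces the common factor $1+\alpha$ on both sides. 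So the two arguments hinge on exactly the same cancellation; yours buys independence from the external formula (and incidentally shows that the condition $\nabla^g_{\mathbf{T}}\mathbf{T}=0$ in \eqref{THypeqn} is not actually used for this statement), at the cost of redoing the Koszul computation. Two small points of hygiene: in your first case the fact you actually need is $h(V,Z)=g(V,Z)$ for \emph{arbitrary} $V$ and horizontal $Z$ (since $\nabla^h_X Y$ is not yet known to be horizontal), which is immediate from $\tau(Z)=0$ but is slightly more than ``$h$ and $g$ agree on $\ker\tau$''; and the division by $1+\alpha$ is licensed precisely by the standing assumption $\alpha|\tau|_g^2>-1$ combined with $|\mathbf{T}|_g=1$.
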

\begin{proof}
	In general, it follows from  \cite[Appendix B]{BernBhattCplxHyp} that the tensor field $C$ satisfies
	$$
	C(X,Y)=\frac{|\mathbf{T}|^{-2}_g}{1+\alpha|\mathbf{T}|_g^2} c(X,Y, \mathbf{T}) \mathbf{T}+\sum_{i=1}^n c(X,Y, E_i) E_i
	$$
	where  $E_1, \ldots, E_{n}$ are orthonormal vectors that are also orthogonal to $\mathbf{T}$ and
	$c$ is a $(0,3)$ tensor field satisfying
	\begin{align*}
		c(X,&Y, \mathbf{T})=\frac{1}{2} \big(g(\nabla^g_X \alpha \mathbf{T}, Y)|\mathbf{T}|^{2}_g+g(\nabla^g_Y \alpha\mathbf{T}, X)|\mathbf{T}|^{2}_g+\alpha  g(\nabla_X^g \mathbf{T}, \mathbf{T})g(\mathbf{T}, Y)\\ 
		&+\alpha  g(\nabla_Y^g \mathbf{T}, \mathbf{T})g(\mathbf{T}, X)-g(\nabla_{\mathbf{T}}^g \alpha \mathbf{T}, X)g(\mathbf{T}, Y)-\alpha g(\nabla_{\mathbf{T}}^g  \mathbf{T}, Y)g(\mathbf{T},X) \big);\\
		c(X,&Y, E_i)= \frac{1}{2} \alpha g(\nabla_X^g \mathbf{T}, E_i) g(\mathbf{T},Y)+\frac{1}{2} \alpha g(\nabla_Y^g \mathbf{T}, E_i) g(\mathbf{T},X)\\
		&-\frac{1}{2} g(\nabla_{E_i}^g \alpha \mathbf{T}, X) g(\mathbf{T},Y)-\frac{1}{2} g(\nabla_{E_i}^g \alpha \mathbf{T}, Y) g(\mathbf{T},X).
	\end{align*}
	The simplification when \eqref{THypeqn} holds is immediate.
\end{proof}

\subsection{Rank two deformations}\label{RankTwoSec}
Let $(M,g)$ be a Riemannian manifold and suppose that $\tau$ and $\omega$ are smooth one forms on $M$ that satisfy $\langle \tau, \omega\rangle_g=0$ and $|\tau|_g |\omega_g|<1$.  We may then define a new Riemannian metric
$$
h=g+ \tau\otimes \omega+\omega\otimes \tau.
$$

Let $\nabla^g$ denote the Levi-Civita connection of $g$ and $\nabla^h$ denote the Levi-Civita connection of $h$.
Let $C(X,Y)$ be the $(1,2)$ tensor field given by
$$
C(X,Y)=\nabla^h_{X} Y-\nabla^g_X Y.
$$
Let $\mathbf{T}$ and $\mathbf{S}$ be the vector fields satisfying
$$
g(\mathbf{T}, Z)=\tau(Z) \mbox{ and }
g(\mathbf{S}, Z)=\omega(Z).
$$
The hypotheses ensure that $g(\mathbf{T}, \mathbf{S})=0$ and $|\mathbf{S}|_g|\mathbf{T}|_g<1$.  
\begin{prop}\label{RankTwoProp}
Suppose that $\mathbf{T}$ satisfies \eqref{THypeqn}. For $X,Y$ $g$-orthogonal to $\mathbf{T}$,
\begin{align*}
	C(X,Y)&=\frac{\mathbf{T}-\mathbf{S}}{2(1-|\mathbf{S}|_g^2)}\big(   g(\nabla_X^g \mathbf{S}, Y)+g(\nabla_Y^g \mathbf{S}, X) +2g(\mathbf{a}(\mathbf{S}), X)g (\mathbf{S}, Y)\\
	&+2g(\mathbf{a}(\mathbf{S}), Y)g (\mathbf{S},X)
	\big)+ g(\mathbf{S}, Y)\mathbf{a}(X)+g(\mathbf{S}, X)\mathbf{a}(Y).
\end{align*}	
\end{prop}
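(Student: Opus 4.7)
The plan is to invoke the standard identity relating two Levi-Civita connections,
$$2h(C(X,Y), Z) = (\nabla^g_X h)(Y,Z) + (\nabla^g_Y h)(X,Z) - (\nabla^g_Z h)(X,Y),$$
valid for an arbitrary vector field $Z$, then compute the right-hand side using $h - g = \tau \otimes \omega + \omega \otimes \tau$ together with \eqref{THypeqn}, and finally invert $h$ to recover $C(X,Y)$ as a vector. Since $g$ is $\nabla^g$-parallel, only the rank-two deformation contributes, and the covariant derivatives reduce to those of $\mathbf{T}$ and $\mathbf{S}$.

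For the second step, the hypothesis $\nabla^g_V \mathbf{T} = \mathbf{a}(V)$ for $V$ orthogonal to $\mathbf{T}$, combined with $\nabla^g_{\mathbf{T}}\mathbf{T} = 0$, gives the useful identity $g(\nabla^g_Z \mathbf{T}, X) = -g(\mathbf{a}(X), Z)$ whenever $X$ is orthogonal to $\mathbf{T}$ and $Z$ is arbitrary (obtained by decomposing $Z$ along and across $\mathbf{T}$). Substituting this and using $g(\mathbf{T}, X) = g(\mathbf{T}, Y) = 0$ together with the skew-symmetry $g(\mathbf{a}(X), Y) + g(\mathbf{a}(Y), X) = 0$, the terms arising from $\nabla^g(\tau \otimes \omega + \omega \otimes \tau)$ collapse to
\begin{align*}
2h(C(X,Y), Z) &= \bigl[g(\nabla^g_X \mathbf{S}, Y) + g(\nabla^g_Y \mathbf{S}, X)\bigr] g(\mathbf{T}, Z) \\
&\quad + 2 g(\mathbf{S}, Y) g(\mathbf{a}(X), Z) + 2 g(\mathbf{S}, X) g(\mathbf{a}(Y), Z).
\end{align*}

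To invert $h$, observe that $g(\mathbf{T}, \mathbf{S}) = 0$ and $|\mathbf{T}|_g = 1$ yield the dual identities $h(\mathbf{T} - \mathbf{S}, Z) = (1 - |\mathbf{S}|_g^2) g(\mathbf{T}, Z)$ and $h(\mathbf{a}(V), Z) = g(\mathbf{a}(V), Z) - g(\mathbf{a}(\mathbf{S}), V) g(\mathbf{T}, Z)$ for any $V$ orthogonal to $\mathbf{T}$. Consequently the combination $g(\mathbf{S}, Y)\mathbf{a}(X) + g(\mathbf{S}, X)\mathbf{a}(Y)$ reproduces the last two terms of the right-hand side above, but introduces a spurious $g(\mathbf{T}, Z)$-contribution; choosing the scalar multiple of $\mathbf{T} - \mathbf{S}$ so as to cancel this spillover and absorb the first bracket produces exactly the claimed formula.

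The main obstacle I anticipate is not conceptual but algebraic: tracking the "$\mathbf{T}$-component corrections" that $h$ generates when paired with $\mathbf{a}(X)$ and $\mathbf{a}(Y)$, so that the extra pieces $2g(\mathbf{a}(\mathbf{S}),X)g(\mathbf{S},Y)$ and $2g(\mathbf{a}(\mathbf{S}),Y)g(\mathbf{S},X)$ emerge with the correct sign inside the scalar coefficient of $\mathbf{T} - \mathbf{S}$. Once this bookkeeping is carried out the result follows without further input beyond \eqref{THypeqn}, in close analogy with the rank-one case of Proposition \ref{RankOneProp}.
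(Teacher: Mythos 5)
Your proposal is correct and follows essentially the same route as the paper: both compute $h(C(X,Y),\cdot)$ via the Koszul-type formula for the difference of Levi-Civita connections, simplify using \eqref{THypeqn} (in particular the identity $g(\nabla^g_Z\mathbf{T},X)=-g(\mathbf{a}(X),Z)$ and the vanishing of $g(\mathbf{a}(X),Y)+g(\mathbf{a}(Y),X)$), and then invert $h$. The only difference is organizational — you invert $h$ by verifying an ansatz against the dual identities $h(\mathbf{T}-\mathbf{S},\cdot)=(1-|\mathbf{S}|_g^2)\,g(\mathbf{T},\cdot)$ and $h(\mathbf{a}(V),\cdot)=g(\mathbf{a}(V),\cdot)-g(\mathbf{a}(\mathbf{S}),V)\,g(\mathbf{T},\cdot)$, whereas the paper expands $C(X,Y)$ over the basis $\{\mathbf{T},\mathbf{S},E_i\}$ and computes each coefficient — and your bookkeeping checks out.
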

\begin{rem}
When $|\mathbf{T}|_g=1$, $X$ is $g$-orthogonal to $\mathbf{T}$ if and only if it is $h$-orthogonal to $\mathbf{T}-\mathbf{S}$.  Indeed, for any $X$,
\begin{align*}
	h(\mathbf{T}-\mathbf{S},X)&=g(\mathbf{T}-\mathbf{S},X)+|\mathbf{T}|_g^2 g(\mathbf{S},X)-|\mathbf{S}|^2_g g(\mathbf{T},X)\\
	&=(1-|\mathbf{S}|^2_g) g(\mathbf{T},X).
\end{align*}
By hypothesis $|\mathbf{S}|_g=|\mathbf{S}|_g|\mathbf{T}|_g<1$ and the claim holds.
\end{rem}
\begin{proof}
Using the Koszul formula we compute that
\begin{align*}
	g(C(X,Y),Z)&+ \tau(C(X,Y))\omega(Z)+\omega(C(X,Y)) \tau(Z)= c(X,Y,Z)
\end{align*}
where
\begin{align*}
	c(X,Y,Z)&=\frac{1}{2} (\nabla^g_{X} \tau)(Y)\omega(Z)+\frac{1}{2} (\nabla^g_{Y} \tau)(X) \omega(Z)+\frac{1}{2} (\nabla^g_X \tau)(Z) \omega(Y)\\
	&+\frac{1}{2} (\nabla^g_Y \tau)(Z) \omega(X)+\frac{1}{2} (\nabla^g_X \omega)(Z) \tau(Y)+\frac{1}{2} (\nabla^g_Y \omega)(Z) \tau(X)\\
	&+ \frac{1}{2} (\nabla^g_{X} \omega)(Y)\tau(Z)+\frac{1}{2} (\nabla^g_{Y} \omega)(X) \tau(Z)-\frac{1}{2} (\nabla^g_Z \omega)(X) \tau(Y)\\
	&-\frac{1}{2} (\nabla^g_Z \omega)(Y) \tau(X)-\frac{1}{2} (\nabla^g_Z \tau)(X) \omega(Y)-\frac{1}{2} (\nabla^g_Z \tau)(Y) \omega(X).	
\end{align*}

Choose orthonormal vectors $E_1, \ldots, E_n$ that are also orthogonal to $\mathbf{S}$ and $\mathbf{T}$. 
$$
C(X,Y)=a(X,Y) \mathbf{T}+b(X,Y) \mathbf{S}+\sum_{i=1}^n c_i(X,Y) E_i
$$
where $a,b$, and $c_i$ are symmetric $(0,2)$ forms given by
$$
a(X,Y)= \frac{|\mathbf{T}|_g^{-2}}{(1-|\mathbf{S}|_g^2|\mathbf{T}|_g^2)}  c(X,Y, \mathbf{T})-\frac{1}{(1-|\mathbf{S}|_g^2 |\mathbf{T}|_g^2)} c(X,Y, \mathbf{S}),
$$
$$
b(X,Y)= \frac{ c(X,Y, \mathbf{S})-c(X,Y, \mathbf{T})}{(1-|\mathbf{S}|_g^2 |\mathbf{T}|_g^2)}+  \frac{|\mathbf{S}|^{-2}-|\mathbf{T}|^{-2}}{(1-|\mathbf{S}|_g^2|\mathbf{T}|_g^2)}  c(X,Y, \mathbf{S}),
$$ 
$$
c_i(X,Y) = c(X,Y,E_i).
$$
Using condition \eqref{THypeqn} one computes for $X,Y$ $g$-orthogonal to $\mathbf{T}$:
\begin{align*}
	c(X,Y, &\mathbf{T})= \frac{1}{2}\big( g(\nabla_X^g \mathbf{S}, Y)|\mathbf{T}|^2+g(\nabla_Y^g \mathbf{S}, X)|\mathbf{T}|^2+g(\nabla_X^g \mathbf{T}, \mathbf{T})g( \mathbf{S},Y)\\
	&+g(\nabla_Y^g \mathbf{T}, \mathbf{T})g( \mathbf{S},X)+g(\nabla_X^g \mathbf{S}, \mathbf{T})g(\mathbf{T},Y)+g(\nabla_Y^g \mathbf{S}, \mathbf{T})g(\mathbf{T},X)\\
	&-g(\nabla_{\mathbf{T}}^g \mathbf{S}, X)g (\mathbf{T}, Y)-g(\nabla_{\mathbf{T}}^g \mathbf{S}, Y)g (\mathbf{T}, X)-g(\nabla_{\mathbf{T}}^g \mathbf{T}, X)g (\mathbf{S}, Y)\\
	&-g(\nabla_{\mathbf{T}}^g \mathbf{T}, Y)g (\mathbf{S},X)\big)\\
	&=\frac{1}{2}\big( g(\nabla_X^g \mathbf{S}, Y)+g(\nabla_Y^g \mathbf{S}, X)\big),
\end{align*}
\begin{align*}
	c(X,Y,&\mathbf{S})= \frac{1}{2}\big( g(\nabla_X^g \mathbf{T}, Y)|\mathbf{S}|^2+g(\nabla_Y^g \mathbf{T}, X)|\mathbf{S}|^2+g(\nabla^g_X \mathbf{T}, \mathbf{S})g(\mathbf{S},Y)\\
	&+g(\nabla^g_Y \mathbf{T}, \mathbf{S})g(\mathbf{S},X) +g(\nabla^g_X \mathbf{S}, \mathbf{S})g(\mathbf{T},Y)	+g(\nabla^g_Y \mathbf{S}, \mathbf{S})g(\mathbf{T},X)\\
	&-g(\nabla_{\mathbf{S}}^g \mathbf{S}, X)g (\mathbf{T}, Y)-g(\nabla_{\mathbf{S}}^g \mathbf{S}, Y)g (\mathbf{T}, X)-g(\nabla_{\mathbf{S}}^g \mathbf{T}, X)g (\mathbf{S}, Y)\\
	&-g(\nabla_{\mathbf{S}}^g \mathbf{T}, Y)g (\mathbf{S},X)\big)\\
	&=-g(\mathbf{a}( \mathbf{S}), X)g (\mathbf{S}, Y)-g(\mathbf{a}(\mathbf{S}), Y)g (\mathbf{S},X)\big),
\end{align*}
\begin{align*}
	c(X,Y,&E_i)=\frac{1}{2}\big( g(\nabla_X^g \mathbf{T}, E_i) g(\mathbf{S},Y)+g(\nabla_Y^g \mathbf{T}, E_i) g(\mathbf{S},X)\\
	&+g(\nabla_X^g \mathbf{S}, E_i) g(\mathbf{T},Y)+g(\nabla_X^g \mathbf{S}, E_i) g(\mathbf{T},Y)- g(\nabla_{E_i}^g \mathbf{T}, X) g(\mathbf{S}, Y)\\
	&- g(\nabla_{E_i}^g \mathbf{T}, Y) g(\mathbf{S}, X)- g(\nabla_{E_i}^g \mathbf{S}, X) g(\mathbf{T}, Y)-g(\nabla_{E_i}^g \mathbf{S}, Y) g(\mathbf{T}, X)\big)\\
	&= \mathbf{a}(X), E_i) g(\mathbf{S},Y)+g( \mathbf{a}(Y), E_i) g(\mathbf{S},X)
\end{align*}
where we repeatedly used the antisymmetry property of $\mathbf{a}$. 
The result follows readily from this.
\end{proof}

\section{Integral Expansions}
We record some asymptotic expansions for certain integrals we use.

\begin{lem}\label{horizontalExpansionLem}
	For $\tau\in(1,\infty), a\in (0,\infty)$ and integers $l,k\geq 1$, let
	$$
 J_{k, l}(\tau; a)=  	\int_0^a \frac{r^{l-1}}{(1+\tau  r^2)^{k}} dr.
	$$
	The following asymptotic expansions hold for $a$ fixed and $\tau\to \infty$,
	$$
		J_{k,l}(\tau; a)  = \left\{\begin{array}{cc} c_{k, l} \tau^{-\frac{1}{2}l} +O(\tau^{-\frac{1}{2}(l+3)}; a) &  l<2 k-2\\
			c_{k, l} \tau^{-\frac{1}{2}l} +O(\tau^{-\frac{1}{2}(l+2)}; a) &  l=2 k-2\\
		c_{k,l} \tau^{-\frac{1}{2}l} +O(\tau^{-\frac{1}{2}(l+1)}; a) &  l=2k-1\\
		\frac{1}{2} \tau^{-\frac{1}{2}l} \log \tau +  O(\tau^{-\frac{1}{2}l} ; a) & l=2k\\
		O(\tau^{-k}; a) & l>2k.\end{array}\right.
	$$
\end{lem}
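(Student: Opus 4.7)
The plan is to reduce everything to a single dimensionless integral by the natural change of variables $u=\sqrt{\tau}\, r$. Under this substitution $r^{l-1}\,dr = \tau^{-l/2}\, u^{l-1}\,du$, so
\begin{equation*}
J_{k,l}(\tau;a) = \tau^{-l/2} \int_0^{\sqrt{\tau}\,a} \frac{u^{l-1}}{(1+u^2)^k}\,du = \tau^{-l/2}\,I_{k,l}(\sqrt{\tau}\,a),
\end{equation*}
where $I_{k,l}(T)=\int_0^T u^{l-1}(1+u^2)^{-k}\,du$. All the listed asymptotics will then follow from the behavior of $I_{k,l}(T)$ as $T\to\infty$, treating $a$ as fixed.

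First I would handle the convergent regime $l<2k$. Here the integrand is integrable on $[0,\infty)$, so $c_{k,l}=\int_0^\infty u^{l-1}(1+u^2)^{-k}\,du$ is finite (an explicit value in terms of $\Gamma$-functions is available but not needed). I then expand the tail using $(1+u^2)^{-k}=u^{-2k}(1+O(u^{-2}))$ for large $u$, giving
\begin{equation*}
c_{k,l}-I_{k,l}(T)=\int_T^\infty \frac{u^{l-1}}{(1+u^2)^k}\,du = \frac{T^{l-2k}}{2k-l}+O(T^{l-2k-2})
\end{equation*}
whenever $l<2k$. Multiplying back by $\tau^{-l/2}$ and using $T=\sqrt{\tau}\,a$, the correction is of order $a^{l-2k}\tau^{-k}$. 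The three subcases in the statement then come from comparing $\tau^{-k}$ to the stated bounds: when $l\le 2k-3$ integer constraints force $2k\ge l+3$, hence $\tau^{-k}=O(\tau^{-(l+3)/2})$; when $l=2k-2$ one has $\tau^{-k}=\tau^{-(l+2)/2}$; and when $l=2k-1$, $\tau^{-k}=\tau^{-(l+1)/2}$. This disposes of three of the five cases.

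For the borderline case $l=2k$ and the divergent case $l>2k$ I would compute $I_{k,l}(T)$ explicitly via the substitution $v=1+u^2$, which turns $u^{l-1}\,du$ (with $l-1$ odd in the case $l=2k$) into a polynomial in $v$ divided by $v^k$:
\begin{equation*}
I_{k,2k}(T)=\tfrac{1}{2}\int_1^{1+T^2}\frac{(v-1)^{k-1}}{v^k}\,dv.
\end{equation*}
Expanding $(v-1)^{k-1}$ by the binomial theorem and integrating term-by-term yields a single $\log v$ contribution (from the $v^{-1}$ term) plus a bounded remainder, so $I_{k,2k}(T)=\log T+O(1;a)=\tfrac12\log\tau+O(1;a)$, which multiplied by $\tau^{-l/2}=\tau^{-k}$ gives the stated expansion with the factor $\tfrac12$. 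When $l>2k$ the integrand is eventually monotone increasing like $u^{l-1-2k}$, so $I_{k,l}(T)=\tfrac{T^{l-2k}}{l-2k}(1+O(T^{-2}))$, and after multiplying by $\tau^{-l/2}$ we obtain $O(\tau^{-k};a)$.

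No step is really a serious obstacle; the main piece of bookkeeping is verifying that the announced error exponents $-\tfrac12(l+3)$, $-\tfrac12(l+2)$, $-\tfrac12(l+1)$ are consistent with the uniform correction of order $\tau^{-k}$ produced by the tail expansion. As noted above this is a short integer-parity check exploiting that $k,l$ are integers with $l<2k-2$, $l=2k-2$, or $l=2k-1$ respectively, so the proof amounts to the substitution, the tail estimate, and this elementary comparison.
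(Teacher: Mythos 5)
Your proof is correct, and it takes a genuinely different route from the paper's. The paper proceeds recursively: it computes the base cases $J_{1,1}$, $J_{2,1}$, $J_{1,2}$, $J_{k,2}$ in closed form (arctangent and logarithm), derives a recursion in $k$ by integration by parts and a recursion in $l$ via the identity $J_{k,l}=\tau^{-1}J_{k-1,l-2}-\tau^{-1}J_{k,l-2}$, and then iterates down to the base cases. You instead make the single scaling substitution $u=\sqrt{\tau}\,r$ and reduce everything to the large-$T$ behavior of $\int_0^T u^{l-1}(1+u^2)^{-k}\,du$: for $l<2k$ the constant $c_{k,l}$ is the convergent integral over $[0,\infty)$ and the error is a tail of size $a^{l-2k}\tau^{-k}$, which your integer-parity check correctly matches against the three announced error exponents; the $l=2k$ case is an explicit logarithm after $v=1+u^2$; and $l>2k$ is a growth bound. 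Your argument is shorter, more transparent, and in fact yields sharper error terms (an explicit $\tau^{-k}$ coefficient plus $O(\tau^{-k-1})$ in the convergent regime). What the paper's recursive route buys is the family of identities relating the constants across $(k,l)$, which foreshadow the relations $C_{k,l}=C_{k-1,l-2}-C_{k,l-2}$ used in the subsequent lemma; but since the paper rederives those from the analogous recursion for $I_{k,l}(t;\epsilon)$ rather than from this lemma, nothing needed downstream is lost by your approach.
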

\begin{proof}
	We readily compute by a change of variables formula that
	$$
	J_{1,2}(\tau; a)= \frac{1}{2}\tau^{-1}\log (a^2 \tau+1)=\frac{1}{2} \tau^{-1}\log \tau +O(\tau^{-1}; a)
	$$
	while, for $k\geq 2$, 
	$$
	J_{k, 2}(\tau; a)= 	 \frac{1}{2(k-1)} \tau^{-1}\left( 1-(1+\tau a^2)^{1-k}\right)=	\frac{1}{2(k-1)}\tau^{-1}+O(\tau^{-k}; a, k) .
$$
	One also computes that
	$$
		J_{1,1}(\tau; a)= \frac{\tan^{-1}(a\sqrt{\tau})}{\tau^{1/2}}=\frac{\pi}{2}\tau^{-\frac{1}{2}}+O(\tau^{-1}; a).
	$$
	Integrating by parts, yields, for $k\geq 1$,  the following recursion formula
	$$
	J_{k+1, 1}(\tau; a)=\frac{2k-1}{2k} 	J_{k, 1}(\tau; a)+\frac{1}{2k} \frac{a}{(1+\tau a^2)^k}.
	$$
	We also observe that when $k=2$ there is a favorable cancellation and so one obtains
	$$
	J_{2, 1}(\tau; a)= \frac{\pi}{4} \tau^{-\frac{1}{2}} + O(\tau^{-2};a).
	$$

	When $l\geq 3$, manipulating the integral gives a recursive identity
	$$
	J_{k, l}(\tau; a)=\tau^{-1} J_{k-1, l-2}(\tau; a)-\tau^{-1}J_{k, l-2}(\tau; a).
	$$
	A direct integration yields, for $l\geq 1$, 
	$
	J_{0, l}(\tau; a)=\frac{1}{l} a^l.
	$
	Using these two facts and iterating, 
	we deduce that
	$$
		J_{1, l}(\tau; a)=\frac{1}{\tau (l-2)} (a^{l-2}-(l-2)	J_{1,l-2}(\tau; a))=\frac{a^{l-2}}{l-2} \tau^{-1} +O(\tau^{-\frac{3}{2}}; a, l).
	$$
Finally, use the recursive identity to reduce to the computed cases.
\end{proof}

\begin{lem}\label{horizontalWithtLem}
	Fix integers $k, l\geq 1$. For $t\in (0,1)$ and $\epsilon\in (0,1)$ set
	$$
	I_{k,l}(t; \epsilon)=\int_{\epsilon}^1 \frac{(1-x)^{\frac{l}{2}-1}}{(1-(1-t) x)^k} dx.
	$$
	One has the following asymptotics as $t\to 0^+$:
	$$
	I_{k,l}(t; \epsilon)= \left\{\begin{array}{cc} C_{k,l}(1+\frac{l}{2} t) t^{-k+\frac{1}{2}l} +O(t^{-k+\frac{1}{2}(l+3)}; \epsilon) &  l<2 k-2\\
				C_{k,l}t^{-1} +O(1; \epsilon) & l=2k-2\\
		C_{k,l}t^{-\frac{1}{2}} +O(1; \epsilon) &  l=2k-1\\
		-  \log t +  O(1; \epsilon) & l=2k\\
		O(1; \epsilon) & l>2k.\end{array}\right.
	$$
	Moreover, the coefficients  satisfy the following:
	\begin{equation*}
   C_{k,l}=\left\{\begin{array}{cc} 
             C_{k-1, l-2}-C_{k,l-2} & 2k>l\geq 3\\
         \frac{l-2}{2(k-1)} C_{k-1, l-2} &  2k\geq l\geq 3       .
       \end{array}\right.
	\end{equation*}
\end{lem}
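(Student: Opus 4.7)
The plan is to reduce $I_{k,l}(t;\epsilon)$ to $J_{k,l}$ from the previous lemma via an explicit change of variables. Substituting $u = 1-x$ and then $u = r^2$ transforms
$$I_{k,l}(t;\epsilon) = \int_0^{1-\epsilon}\frac{u^{l/2-1}}{(t+(1-t)u)^k}\,du = 2t^{-k}J_{k,l}(\tau;\sqrt{1-\epsilon}),$$
where $\tau = (1-t)/t \to \infty$ as $t \to 0^+$. Thus Lemma \ref{horizontalExpansionLem} supplies the relevant large-$\tau$ asymptotics for free; multiplying by $2t^{-k}$ and using $\tau = t^{-1}-1$ converts them to $t$-expansions.

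Next I would match the five cases. The key identities are $\tau^{-l/2} = t^{l/2}(1-t)^{-l/2} = t^{l/2}(1 + \tfrac{l}{2}t + O(t^2))$ and $\log\tau = -\log t + O(t)$. Feeding these into the corresponding case of Lemma \ref{horizontalExpansionLem} and setting $C_{k,l} = 2 c_{k,l}$ produces each claimed expansion. In the case $l < 2k-2$, the factor $(1 + \tfrac{l}{2}t)$ comes from the binomial expansion of $(1-t)^{-l/2}$, and the $O(t^{-k+(l+4)/2})$ tail of that expansion is absorbed by the inherited error $O(t^{-k+(l+3)/2})$. The $l = 2k$ case uses $t\log t = O(1)$ to absorb cross-terms; the remaining cases are immediate.

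For the two recursions I would transfer analogous recursions for $c_{k,l}$. The algebraic identity $\frac{r^{l-1}}{(1+\tau r^2)^k} = \tau^{-1}\left(\frac{r^{l-3}}{(1+\tau r^2)^{k-1}} - \frac{r^{l-3}}{(1+\tau r^2)^k}\right)$ used in the proof of Lemma \ref{horizontalExpansionLem} gives $J_{k,l} = \tau^{-1}(J_{k-1,l-2} - J_{k,l-2})$, whose leading coefficient yields $c_{k,l} = c_{k-1,l-2} - c_{k,l-2}$ for $3 \leq l \leq 2k-1$; doubling gives the first claimed recursion. Integration by parts with $u = r^{l-2}$ and $dv = r(1+\tau r^2)^{-k}\,dr$ produces
$$J_{k,l}(\tau;a) = \frac{l-2}{2(k-1)}\tau^{-1}J_{k-1,l-2}(\tau;a) + O(\tau^{-k};a),$$
whose leading coefficient supplies $c_{k,l} = \frac{l-2}{2(k-1)} c_{k-1,l-2}$, hence the second recursion after multiplication by $2$; in the boundary case $l = 2k$ one verifies that the recursion correctly propagates the logarithmic leading coefficient $\tfrac{1}{2}$.

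The main obstacle is purely bookkeeping: in each case one must confirm that the error $O(\tau^{-\,\cdot\,})$ from Lemma \ref{horizontalExpansionLem}, after scaling by $t^{-k}$, dominates both the subleading terms generated by the binomial expansion of $(1-t)^{-l/2}$ and the $O(t)$ remainder from $\log(1-t)$. No essentially new analytic input is needed beyond this careful case tracking.
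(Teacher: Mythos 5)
Your proposal is correct and follows essentially the same route as the paper: the substitution $r=\sqrt{1-x}$ reducing $I_{k,l}(t;\epsilon)$ to $2t^{-k}J_{k,l}(t^{-1}-1;\sqrt{1-\epsilon})$, case-matching via the binomial expansion of $(1-t)^{-l/2}$ and $\log\tau=-\log t+O(t)$, and the same two recursion mechanisms (the algebraic splitting already present in Lemma \ref{horizontalExpansionLem} and an integration by parts). The only cosmetic difference is that you run the recursions at the level of $J_{k,l}$ and transfer them via $C_{k,l}=2c_{k,l}$, whereas the paper manipulates $I_{k,l}$ directly; the two are equivalent under the change of variables.
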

\begin{proof}
Setting $r=\sqrt{1-x}$, the change of variables formula imply
	$$
	I_{k,l}(t; \epsilon)= 2 t^{-k} J_{k,l}(t^{-1}-1; \sqrt{1-\epsilon}).
	$$
The leading order behavior of expansion then follows from Lemma \ref{horizontalExpansionLem}.  The subleading order behavior when $l<2k-2$ follows from
$$
(t^{-1}-1)^{-\frac{l}{2}}= t^{-\frac{l}{2}} +\frac{l}{2}t^{-\frac{l}{2}+1}+O(t^{-\frac{l}{2}+2}).
$$

By algebraically manipulating the integral one obtains the recursive relationship
	$$
I_{k,l}(t; \epsilon)= t (I_{k,l}(t; \epsilon)-I_{k,l-2}(t; \epsilon))+I_{k-1,l-2}(t; \epsilon)	.
	$$
When $2k>l\geq 3$ this formula together with the leading order behavior of the asymptotic expansion  yields
$$
C_{k,l}=C_{k-1,l-2}-C_{k,l-2}.
$$
Likewise, integrating by parts implies
$$
I_{k,l}(t; \epsilon)= \frac{2k}{l} I_{k+1, l+2}(t; \epsilon)-t\frac{2k}{l} I_{k+1,l+2}(t; \epsilon)+O(1; \epsilon).
$$
When $2k>l+2\geq 1$, the leading order behavior of the asymptotic expansion gives
$$
C_{k,l} = \frac{2k}{l} C_{k+1, l+2}.
$$ 
	
\end{proof}

\bibliographystyle{hamsabbrv}
\bibliography{Library2}
\end{document}